\documentclass[a4paper, 11pt,reqno]{amsart}
\usepackage{amsthm,amssymb,amsmath,graphicx,psfrag,enumitem,mathrsfs}
\usepackage{mathtools}
\usepackage{dsfont}
\usepackage{bm}
\usepackage[foot]{amsaddr}
\usepackage[british]{babel}
\usepackage[babel]{microtype}
\usepackage[margin=1in]{geometry}
\usepackage{setspace}

\usepackage[textsize=footnotesize]{todonotes}

\usepackage{algorithm}
\usepackage{algpseudocode}
\usepackage{tikz}
\usetikzlibrary{calc} 

\usetikzlibrary{backgrounds}

\usepackage[initials]{amsrefs} 

\makeatletter
\def\NAT@spacechar{~}
\makeatother

    
\usepackage{hyperref}
\usepackage[capitalise]{cleveref}
\hypersetup{colorlinks=true,
   citecolor=blue,
   filecolor=blue,
   linkcolor=blue,
   urlcolor=blue
}

\crefname{figure}{Figure}{Figures}
\Crefname{figure}{Figure}{Figures}

\allowdisplaybreaks

\newtheorem{definition}{Definition}[section]
\newtheorem{claim}{Claim}
\newtheorem{construction}{Construction}

\newtheorem{proposition}[definition]{Proposition}
\newtheorem{theorem}[definition]{Theorem}

\newtheorem{lemma}[definition]{Lemma}

\newtheorem{question}[definition]{Question}
\newtheorem{problem}[definition]{Problem}

\numberwithin{equation}{section}

\newcommand{\comment}[1]{}

\newcommand{\cH}{\mathcal{H}}

\newcommand{\dist}{{\rm dist}}

\newcommand{\Pro}{\mathbb{P}}

\renewcommand{\epsilon}{\varepsilon}

\newcommand{\ora}{\overrightarrow}

%


\newcommand{\strm}{{\rm sm}}
\newcommand{\COMMENT}[1]{}
\renewcommand{\COMMENT}[1]{\footnote{\textcolor{blue!70!black}{#1}}} 







\title{Strong complete minors in digraphs}
\date{\today}

\address{ $^2$ School of Mathematics, University of Birmingham, 
Edgbaston, Birmingham, B15 2TT, United Kingdom.}
\address{$^1$ Department of Mathematics, Karlsruhe Institute of Technology,  76131 Karlsruhe, Germany.}

\author[M.~Axenovich]{Maria Axenovich $^1$}

\email{maria.aksenovich@kit.edu}
\author[A.~Gir\~{a}o]{Ant\'onio Gir\~{a}o $^2$}
\email{giraoa@bham.ac.uk}
\author[R.~Snyder]{Richard Snyder $^1$}
\email{richard.snyder@kit.edu}
\author[L.~Weber]{Lea Weber $^1$}
\email{lea.weber@kit.edu}

\thanks{
$^2$ The research leading to these results was also partially supported by the EPSRC, grant no.~EP/N019504/1 (A.~Gir\~ao).}

\begin{document}

\begin{abstract}
Kostochka and Thomason independently showed that any graph with average degree $\Omega(r\sqrt{\log r})$ contains a $K_r$ minor. In particular, any graph with chromatic number $\Omega(r\sqrt{\log r})$ contains a $K_r$ minor, a partial result towards Hadwiger's famous conjecture. In this paper, we investigate analogues of these results in the directed setting. There are several ways to define a minor in a digraph. One natural way is as follows. A \emph{strong $\ora{K}_r$ minor} is a digraph whose vertex set is partitioned into $r$ parts such that each part induces a strongly-connected subdigraph, and there is at least one edge in each direction between any two distinct parts. We investigate bounds on the dichromatic number and minimum out-degree of a digraph that force the existence of strong $\ora{K}_r$ minors as subdigraphs. In particular, we show that any tournament with dichromatic number at least $2r$ contains a strong $\ora{K}_r$ minor, and any tournament with minimum out-degree $\Omega(r\sqrt{\log r})$ also contains a strong $\ora{K}_r$ minor. The latter result is tight up to the implied constant, and may be viewed as a strong-minor analogue to the classical result of Kostochka and Thomason. Lastly, we show that there is no function $f: \mathbb{N} \rightarrow \mathbb{N}$ such that any digraph with minimum out-degree at least $f(r)$ contains a strong $\ora{K}_r$ minor, but such a function exists when considering dichromatic number.
\end{abstract}
\maketitle


\section{Introduction}

The relationship between the chromatic number of a graph and the existence of certain types of substructures, like minors and topological minors, has a long history. As usual, we say that a graph is a $K_r$ \emph{minor} if there is a partition of its vertex set into $r$ parts such that each part induces a connected subgraph, and there is at least one edge between any two distinct parts. One of the most famous examples of a problem of the aforementioned type is \emph{Hadwiger's conjecture}: for every $r \ge 1$, any graph with chromatic number at least $r$ contains a $K_r$ minor. 

There are a few partial results concerning this conjecture. It is known to be true for $r \le 6$ (see~\cites{D, H, RST, W}). For general $r$, Kostochka~\cite{K} and Thomason~\cite{Thomason} independently showed that any graph with average degree $\Omega(r\sqrt{\log r})$ contains a $K_r$ minor. Of course, this implies that any graph with chromatic number $\Omega(r\sqrt{\log r})$ contains a $K_r$ minor. This was the best general result towards Hadwiger's conjecture for some time. Recently, however, Norin, Postle, and Song~\cites{NS, Postle1} improved this result. Building off of that work, Postle~\cite{Postle2} proved that any graph with chromatic number $\Omega(r(\log \log r)^6)$ contains a $K_r$ minor, and this is the best bound to date.

In this paper, we look at analogous problems in the directed setting. In particular, we investigate the existence of certain types of minors in digraphs under conditions such as large dichromatic number and large minimum out-degree. Before proceeding, let us introduce a bit of terminology. We remark that all digraphs we consider are simple and do not contain loops. A digraph $D$ is \emph{strongly-connected} if for every ordered pair $(u, v)$ of vertices in $D$ there is a directed path in $D$ from $u$ to $v$. As usual, $D$ is \emph{weakly-connected} if its underlying graph is connected. The complete directed graph on $r$ vertices, denoted by $\ora{K}_r$, is a directed graph in which every pair of vertices is connected by an edge in each direction.

There are several ways one can define a minor in a digraph (e.g., the notion of a \emph{butterfly minor}, originally introduced by Johnson, Robertson, Seymour, and Thomas; see section $5$ of~\cite{JRST}). Here, we consider so-called \emph{strong minors}. Given a digraph $H$, we say that $D$ is a \emph{strong $H$ minor} if $V(D)$ admits a partition $\{X_v: v \in V(H)\}$ into non-empty sets (called \emph{branch sets}) such that 
\begin{itemize}
   \item the digraph $D[X_v]$ induced by $X_v$ is strongly-connected for all $ v \in H$, and
   \item $uv \in E(H)$ if and only if there is an edge in $D$ from $X_u$ to $X_v$.
\end{itemize}
A similar definition holds for \emph{weak $H$ minors}: we merely insist in this case that each branch set induces a weakly-connected subdigraph. We say that a digraph $D$ \emph{contains} a strong (weak) $H$ minor if it contains a strong (weak) $H$ minor as a subdigraph. Equivalently, $D$ contains a strong $H$ minor whenever $H$ can be obtained from a subdigraph of $D$ by repeatedly contracting a strongly-connected subdigraph to a vertex and removing loops and multiple edges. We remark that the notion of a strong minor has been investigated before by Kim and Seymour~\cite{KS}, where they showed that tournaments (more generally, semi-complete digraphs) are well-quasi-ordered under the strong minor relation.

Jagger (\cite{J},~\cite{J2}) investigated extensions of the classical result of Kostochka and Thomason to digraphs. In particular, he addressed the following question: How many edges must a digraph have in order to guarantee weak (or strong) complete minors as subdigraphs? In particular, he showed~\cite{J} that any digraph with average degree $\Omega(r\sqrt{\log r})$ contains a weak $\ora{K}_r$ minor. This is a weak-minor analogue of the aforementioned result of Kostochka and Thomason. A similar theorem, however, cannot hold for containing \emph{strong} $\ora{K}_r$ minors: the transitive tournament on $n$ vertices has $\binom{n}{2}$ edges, but does not even contain a strong $\ora{K}_2$ minor. Thus, in order for a digraph $D$ to contain a strong $\ora{K}_r$ minor, it must have more than $\binom{|D|}{2}$ edges. Another way of saying this is the following: There is no function $f: \mathbb{N} \rightarrow \mathbb{N}$ such that any digraph with average degree at least $f(r)$ contains a strong $\ora{K}_r$ minor. This suggests that density is not the appropriate digraph parameter to force strong complete minors, but perhaps there are other parameters that do so. 

A $k$-\emph{colouring} of a digraph is a partition of its vertex set into $k$ acyclic sets. The minimum $k$ for which this is possible is the \emph{dichromatic number} of $D$, which we shall denote by $\chi(D)$ (as we never consider the usual chromatic number in this paper, we hope this causes no confusion). This parameter was introduced by Neumann-Lara~\cite{N}, and has garnered interest in recent years (e.g., see~\cites{TandC, HLNT, HLTW} for some interesting results concerning this parameter).

While the transitive tournament discussed above is dense, it contains no large complete minors. On the other hand, there are two parameters for which the transitive tournament is essentially trivial: it only has minimum out-degree $0$ and dichromatic number $1$. Therefore, one might hope that large dichromatic number/out-degree is sufficient to guarantee large complete minors in general digraphs. And, if this is not the case, perhaps it is true in the more restrictive class of tournaments. In general, which digraph parameters, if sufficiently large, force large strong complete minors? This is the kind of question we address in this paper, our focus being on large dichromatic number and large minimum out-degree. We remark that this type of problem has been addressed in the context of forcing \emph{subdivisions} in digraphs, instead of minors; see Aboulker, Cohen, Havet, Lochet, Moura, and Thomass{\'e}~\cite{ACH}, and the recent results of Gishboliner, Steiner, and Szab{\'o} \cites{GSS, GSS2}.

\subsection{Our results}\label{subsec:results}

We shall introduce the following terminology in order to state our results:
\begin{definition}\label{defn:strong}
For a digraph $D$, let $\strm(D)$ denote the largest $r$ such that $D$ contains a strong $\ora{K}_r$ minor.
\end{definition}

Our aim is to determine whether  large dichromatic number or large out-degree is sufficient to force the existence of strong $\ora{K}_r$ minors in digraphs. 
We show that the large dichromatic number indeed guarantees the existence of strong $\ora{K}_r$ minors in digraphs.  
However, we show that the large minimum out degree  is not sufficient to force strong minors in general digraphs, but it is sufficient in  tournaments.  
Specifically, we extend the classical results of Kostochka and Thomason to strong minors: any tournament with minimum out-degree $\Omega(r\sqrt{\log r})$ contains a strong $\ora{K}_r$ minor. 


We first consider tournaments. The following theorem asserts that dichromatic number linear in $r$ already forces strong $\ora{K}_r$ minors in tournaments.

\begin{theorem}\label{thm:main-tourn-1}
Let $r \ge 1$ be an integer and suppose $T$ is a tournament with $\chi(T) \ge 2r$. Then $\strm(T) \ge r$. Moreover, for every $r \ge 2$ there exists a tournament $S_r$ such that $\chi(S_r) = r$ and $\strm(S_r) \le r - 1$.
\end{theorem}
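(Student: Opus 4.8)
The plan is to prove the two assertions separately. For the first assertion, I would proceed by contradiction or, more constructively, by iteratively extracting branch sets. Suppose $T$ is a tournament with $\chi(T) \ge 2r$. The key idea is that one can greedily build $r$ disjoint branch sets, each inducing a strongly-connected subtournament, such that every pair has edges in both directions. To set this up, first recall the standard fact that every tournament has a Hamiltonian path, and more relevantly, that a tournament decomposes uniquely into its strongly-connected components which are linearly ordered (the ``condensation'' is a transitive tournament); the top component in this order is a strongly-connected subtournament $C$ with the property that there is an edge from $C$ to every vertex outside $C$. The plan is to repeatedly peel off such dominant strongly-connected sets: having chosen branch sets $X_1, \dots, X_i$, consider what remains and argue that the dichromatic number drops by a controlled amount each time we remove a suitable strongly-connected piece, so that after fewer than $2r$ removals we still have dichromatic number at least $1$ and can always find another strongly-connected set with an edge going ``the right way''. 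The factor $2$ should come from the fact that each new branch set needs to both dominate and be dominated by the others, so one needs to control two one-sided conditions; I expect the cleanest route is to alternate between taking a ``top'' strongly-connected set (to get edges out) and a ``bottom'' one (to get edges in), or to take a strongly-connected set $X$ that simultaneously has both in- and out-edges to the previously chosen sets, each step reducing $\chi$ by at most $2$.

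For the ``moreover'' part, the task is to construct, for each $r \ge 2$, a tournament $S_r$ with $\chi(S_r) = r$ but $\strm(S_r) \le r-1$. The natural candidate is an extremal tournament for dichromatic number: take a tournament on $n$ vertices, say a quadratic-residue or random-like tournament, with $\chi = r$ and $n$ as small as possible. The point of minimality is that if $\strm(S_r) \ge r$, then $S_r$ contains $r$ disjoint nonempty branch sets, each strongly-connected, with all pairs bidirectionally joined; contracting each branch set gives an honest $\ora{K}_r$, and one should be able to argue that this forces $\chi(S_r) > r$ — for instance, because contracting a strongly-connected set to a point cannot decrease the dichromatic number (an acyclic colouring of the contraction pulls back to an acyclic colouring of the original using that strongly-connected sets are monochromatic is \emph{not} automatic, so care is needed here), hence $\chi(S_r) \ge \chi(\ora{K}_r) = r$, which is not yet a contradiction. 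So the construction must be slightly more clever: one wants a tournament where $\chi = r$ is witnessed ``tightly'' and any strong $\ora{K}_r$ minor would push the dichromatic number strictly above $r$. A clean choice is to take $S_r$ to be a smallest tournament with $\chi(S_r) = r$; then on at most some bounded number of vertices, and one checks directly (or via a counting/extremal argument on the number of vertices needed to host $r$ strongly-connected branch sets pairwise bidirectionally joined) that no strong $\ora{K}_r$ minor fits.

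The main obstacle I anticipate is the ``moreover'' direction, specifically pinning down the right construction and the right invariant. The inequality $\strm(D) \le \chi(D)$ would make the construction trivial, but that inequality is false in general (indeed the whole paper is about when it can fail), so one needs a genuine example. The most likely successful approach is: (i) prove a lemma that a strong $\ora{K}_r$ minor, after contraction, yields $\chi \ge r$ \emph{and} some additional structural feature forcing at least one extra colour, perhaps using that the branch sets being strongly-connected and nontrivial means the minor is a proper substructure leaving ``slack''; or (ii) simply exhibit an explicit small tournament (e.g.\ for $r=2$, $S_2$ is the transitive tournament on any number of vertices has $\chi = 1$, not $2$ — so instead $S_2$ should be the smallest tournament with $\chi = 2$, which is the directed triangle $\ora{C}_3$: it has $\chi(\ora{C}_3) = 2$ and clearly $\strm(\ora{C}_3) = 1 \le r-1$ since it has only $3$ vertices and a strong $\ora{K}_2$ minor needs each part strongly-connected, impossible with a single vertex unless the two vertices form a digon, which $\ora{C}_3$ lacks). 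Generalising from $\ora{C}_3$ to all $r$ — presumably via an iterated/substitution construction building $S_r$ from $S_{r-1}$ and $\ora{C}_3$ — and verifying the dichromatic number stays exactly $r$ while no strong $\ora{K}_r$ minor appears, is where the real work lies.
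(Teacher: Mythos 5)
Your plan for the first assertion has a genuine gap: the only mechanism you actually describe — peeling strongly-connected components of the condensation, alternating a ``top'' and a ``bottom'' piece — cannot work. The top component of the condensation receives no edges at all from the rest of the tournament, so it can never serve as a branch set, and once you restrict to a strongly-connected tournament (which you must) there is nothing to peel; moreover, removing a strongly-connected component gives no control on how much $\chi$ drops, since its dichromatic number can be arbitrarily large. The statement ``each step reducing $\chi$ by at most $2$'' is exactly the crux, and your proposal supplies no mechanism for it. What the paper proves as its key lemma (\Cref{lem:tournaments-dominating}) is that every strongly-connected tournament contains a set $R$ that induces a strongly-connected subtournament, is dominating in both directions (every outside vertex has both an in- and an out-neighbour in $R$), and satisfies $\chi(T[R]) \le 2$; removing such an $R$ drops $\chi$ by at most $2$, and $R$ becomes the $r$-th branch set in a straightforward induction. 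The natural entry point, absent from your plan, is that a \emph{maximum transitive set} is automatically dominating in both directions (otherwise it could be extended by a new source or sink), has dichromatic number $1$, and can be made strongly connected by attaching a shortest return path, which has dichromatic number at most $2$; this already gives the bound $3r$ (\Cref{prop:tournaments-chromatic-easy}), and improving $3$ to $2$ is the delicate part of the paper's argument.

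For the ``moreover'' part you do, at the very end, gesture at the right idea ($S_2$ a directed triangle, then an iterated substitution), which is the paper's \Cref{const:tourn}: $S_r = \Delta(K_1, S_{r-1}, S_{r-1})$, a cyclic arrangement of a single vertex and two copies of $S_{r-1}$ with all edges oriented consistently around the triangle. But your proposal neither writes the construction down nor verifies either property, and the routes you spend most of the discussion on (a vertex-minimal tournament with $\chi = r$, or trying to show a strong $\ora{K}_r$ minor forces $\chi > r$) do not lead anywhere. The actual verification (\Cref{prop:constr-1}) is a short induction: since one copy of $S_{r-1}$ beats the other, any strongly-connected branch set avoiding the extra vertex lies entirely inside one copy, and no two branch sets can lie in different copies, so $\strm(S_r) \le \strm(S_{r-1}) + 1 \le r-1$, while $\chi(S_r) = r$ is checked by induction as well. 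As written, both halves of your proposal are missing their essential content.
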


The construction of $S_r$ is done in \Cref{sec:const}. We believe that this construction is closer to the truth, concerning the correct dependence on $\chi$ for finding strong complete minors.

As is customary, for a digraph $D$ we denote by $\delta^+(D)$ the minimum out-degree of $D$. Our next theorem investigates strong complete minors in tournaments with large minimum out-degree. It may be viewed as the appropriate analogue of the classical result of Kostochka and Thomason for strong complete minors.

\begin{theorem}\label{thm:main-tourn-2}
There exists a constant $C > 0$ such that the following holds. If $r$ is a positive integer and $T$ is a tournament with $\delta^+(T) \ge Cr\sqrt{\log r}$, then $\strm(T) \ge r$. Moreover, this is tight up to the constant $C$.
\end{theorem}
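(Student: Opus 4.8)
The plan is to prove the existence direction by a density-increment / minimum-out-degree argument that reduces to finding strong $\ora{K}_r$ minors in tournaments with large dichromatic number, and then invoke \Cref{thm:main-tourn-1}. First I would recall the classical Kostochka--Thomason fact in the directed setting: a tournament $T$ with $\delta^+(T) \ge d$ has at least $\frac{dn}{2}$ edges (where $n = |T|$ and in fact $n \ge 2d+1$), so it is fairly dense. However density alone is useless here (the transitive tournament), so the key observation to extract is that large minimum out-degree forces large dichromatic number on some subtournament: more precisely, I would show that a tournament with $\delta^+(T) \ge Cr\sqrt{\log r}$ contains a subtournament $T'$ with $\chi(T') \ge 2r$. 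Given such $T'$, \Cref{thm:main-tourn-1} immediately yields $\strm(T') \ge r$, hence $\strm(T) \ge r$.

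To get the bound on the dichromatic number, I would argue by contradiction: suppose every subtournament of $T$ has dichromatic number less than $2r$. The standard way to exploit this is to greedily build an acyclic (transitive) set: if $\chi$ is bounded by $k = 2r-1$ on all subtournaments, then every subtournament contains an acyclic set of linear size (a fraction $\ge 1/k$ of its vertices, by averaging over an optimal colouring). Iterating this "peel off a large acyclic set" procedure, or alternatively using it to find a long transitive subtournament, one derives a contradiction with the minimum out-degree condition, because a large minimum out-degree forbids large transitive subtournaments relative to $n$ — here is where the $\sqrt{\log r}$ factor must appear. Concretely, I expect to need the bound that a tournament on $n$ vertices with minimum out-degree $\delta^+$ has every transitive subtournament of size $O(\log n)$ when $\delta^+ \ge n/2 - o(n)$-type regimes are avoided; the cleaner route is: iteratively deleting large acyclic sets shows $\chi(T) = O(\log n / \log(\text{something}))$-type bounds are false, so in fact one shows directly that bounded dichromatic number on all subtournaments forces $\delta^+(T)$ to be small, i.e.\ $\delta^+(T) = O(r \cdot (\text{number of peeling rounds}))$, and the number of rounds until a large out-degree kernel disappears contributes the logarithmic factor. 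I would tune constants so that $\delta^+(T) \ge Cr\sqrt{\log r}$ beats this bound.

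For the tightness (lower-bound) direction, I would exhibit, for infinitely many $r$, a tournament $T$ with $\delta^+(T) = \Omega(r\sqrt{\log r})$ and $\strm(T) < r$. The natural candidate is a random tournament, or a suitable blow-up of one: take $T$ to be a random tournament on $n$ vertices with $n \asymp r\sqrt{\log r}$, so that whp $\delta^+(T) \ge n/2 - O(\sqrt{n \log n}) = \Omega(r\sqrt{\log r})$. Then I would show that whp such a $T$ has no strong $\ora{K}_r$ minor: a strong $\ora{K}_r$ minor on branch sets $X_1,\dots,X_r$ requires $\binom{r}{2}$ pairs each carrying edges in both directions (a constraint that is automatically satisfied in a tournament only when $|X_i| \cdot |X_j|$ is large enough to beat a union bound) together with each $D[X_i]$ being strongly connected — and a random tournament on few vertices cannot simultaneously partition into $r$ strongly connected pieces and have all $\binom{r}{2}$ "both-directions" pairs; a first-moment computation over all partitions into $r$ parts of an $n$-vertex set, with $n$ only of order $r\sqrt{\log r}$, should show the expected number of such minors is $o(1)$. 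The key point is that being strongly connected costs each part at least, say, $3$ vertices in the hard case, forcing $n \ge 3r$, while the "both directions between all pairs" condition in a random tournament with only constant-size parts fails with probability bounded away from $0$ per pair, and there are $\binom{r}{2}$ independent-ish pairs; balancing these against the $\binom{n}{r}$-type entropy term pins down the threshold at $n = \Theta(r\sqrt{\log r})$.

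The main obstacle I anticipate is the first (existence) direction: cleanly converting "large minimum out-degree" into "large dichromatic number of a subtournament" with the correct $\sqrt{\log r}$ dependence, rather than an extra logarithmic loss. The naive peeling argument loses a $\log n$ factor, which would give $\Omega(r \log r)$ rather than $\Omega(r\sqrt{\log r})$; to recover the square-root I expect one must not peel to transitivity but instead use the Kostochka--Thomason-type result more carefully — for instance, find a dense directed "shadow" structure and apply a directed analogue of the minor-forcing lemma, or use that a tournament with min out-degree $d$ on $n$ vertices contains, for every $m \le d$, a subtournament with min out-degree $\ge d - n/2^{?}$ concentrated on few vertices, so that the relevant ratio $\delta^+/n$ — which controls the dichromatic number via a sparsification argument — stays of order $1/\sqrt{\log r}$. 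Getting this ratio bookkeeping exactly right, and matching it to the extremal construction, is the crux.
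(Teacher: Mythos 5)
Your proposal for the existence direction has a genuine gap that cannot be patched along the lines you suggest. The intended reduction---``show that if $\delta^+(T) \ge Cr\sqrt{\log r}$ then $T$ contains a subtournament $T'$ with $\chi(T') \ge 2r$, then invoke \Cref{thm:main-tourn-1}''---is false as stated, and not just by a loss in the constant. A random tournament $T$ on $n = cr\sqrt{\log r}$ vertices has, w.h.p., minimum out-degree roughly $n/2 = \Theta(r\sqrt{\log r})$, but its dichromatic number is $\Theta(n/\log n) = \Theta(r/\sqrt{\log r})$, since the largest transitive subtournament has size $\Theta(\log n)$. Every subtournament of $T$ has dichromatic number at most $\chi(T) = O(r/\sqrt{\log r}) \ll 2r$, so no constant $C$ makes your proposed intermediate lemma true. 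In other words, the $\sqrt{\log r}$ you want and the $\log$ factor coming from dichromatic number of dense tournaments are incompatible; going through dichromatic number can at best give $\delta^+ = \Omega(r\log r)$. You correctly sense this risk in your last paragraph, but the alternatives you gesture at (``use Kostochka--Thomason more carefully,'' ``dense directed shadow structure'') do not constitute an argument, and the obstruction above is structural rather than a matter of careful bookkeeping.

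The paper does not route through dichromatic number at all. It splits into two cases. If $T$ (or some subtournament) is $\Omega(r\sqrt{\log r})$-\emph{strongly-connected}, it finds a \emph{weak} $\ora{K}_r$ minor in a small nearly-regular piece (Jagger's theorem) and then uses Pokrovskiy's theorem that highly strongly-connected tournaments are highly linked to close up each branch set into a strongly-connected set with disjoint paths outside. Otherwise, it repeatedly finds small cut-sets and extracts a system of vertex-disjoint edges $w_iv_i$ together with a large residual set $F(S)$ on which each $v_i$ has almost full out-degree and each $w_i$ has almost full in-degree; a random choice of one vertex $z_i$ per edge gives $m$ directed triangles $\{w_i,v_i,z_i\}$, most pairs of which already have edges in both directions, and then the undirected Kostochka--Thomason theorem is applied to the auxiliary graph of ``good'' triangle pairs. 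Neither branch uses \Cref{thm:main-tourn-1} or any dichromatic-number bound.

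Your tightness argument is essentially the paper's: take a random tournament on $n = cr\sqrt{\log r}$ vertices and compute a first moment over partitions (this is the Bollob\'as--Catlin--Erd\H{o}s style computation cited in the paper). That part is fine, although the paper points out that one does not even need the strong-connectivity of the parts---already the ``edges in both directions between every pair'' requirement fails w.h.p.---so the triangle-counting heuristic you sketch is heavier machinery than necessary.
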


The tightness can be seen by considering a random tournament on $cr\sqrt{\log r}$ vertices, for some constant $c > 0$, and applying a standard argument (e.g., see Bollob{\'a}s, Catlin and Erd\H{o}s~\cite{BCH}). In fact, this argument shows that the random tournament on $cr\sqrt{\log r}$ vertices with high probability does not even contain $r$ nonempty sets with an edge in each direction between each pair of sets. The analogous result to \Cref{thm:main-tourn-2} for digraphs is false, however: 

\begin{theorem}\label{thm:negative-digraph}
There is no  function $f: \mathbb{N} \rightarrow \mathbb{N}$ such that every digraph $D$ with $\delta^+(D) \ge f(r)$ satisfies $\strm(D) \ge r$.
\end{theorem}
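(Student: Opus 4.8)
The plan is to construct, for each $d$, a digraph $D_d$ with $\delta^+(D_d) \ge d$ such that $\strm(D_d) \le 1$ — that is, $D_d$ contains no strong $\ora{K}_2$ minor at all. The key observation is that a digraph has no strong $\ora{K}_2$ minor precisely when it has no two disjoint strongly-connected subdigraphs with edges in both directions between them; in particular this holds if every strongly-connected subdigraph is ``dominated'' in a way that prevents a return edge. The natural model to try is a blow-up of a transitive tournament where each ``level'' is itself highly connected but the global structure remains acyclic at the level of a quotient. Concretely, take the transitive tournament $T_N$ on levels $L_1, \dots, L_N$ (all edges pointing from lower to higher index), and replace each level $L_i$ by a gadget $B_i$ on $m$ vertices that is strongly-connected and has large minimum out-degree internally; direct all edges from $B_i$ to $B_j$ for $i < j$. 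Then $\delta^+ \ge d$ provided each gadget supplies out-degree $d$ internally (e.g.\ $B_i$ a suitable circulant digraph on $m \ge d+1$ vertices).

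The first step is to verify the degree condition: since every vertex in $B_i$ sends all of $B_{i+1} \cup \dots \cup B_N$ out-edges plus its $d$ internal out-neighbours, $\delta^+(D_d) \ge d$, and we may even drop the forward edges between distinct gadgets if we only want to track the lower bound. The second and main step is to show $\strm(D_d) \le 1$. Suppose for contradiction that $X_1, X_2$ are disjoint vertex sets each inducing a strongly-connected subdigraph with edges in both directions between them. Let $i^* = \max\{i : L_i \cap X_1 \ne \emptyset\}$ and consider which gadget that maximal level lies in; the point is to argue that all edges leaving $X_1$ toward $X_2$ emanate from vertices in the ``top'' gadget met by $X_1$, while all edges from $X_2$ back to $X_1$ must land in that same top region — and then trace a directed cycle through $X_1 \cup X_2$ that is forced to move strictly upward through the gadget indices, a contradiction with acyclicity of the quotient. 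The cleanest way to run this is: the quotient of $D_d$ obtained by contracting each $B_i$ is the transitive tournament $T_N$, which is acyclic; any strong $\ora{K}_2$ minor in $D_d$ would project to a closed walk in $T_N$ visiting at least the two indices corresponding to where the bidirectional edges occur, unless both $X_1$ and $X_2$ (or at least the relevant interacting vertices) lie inside a single gadget $B_i$ — so it suffices to additionally ensure each individual gadget $B_i$ contains no strong $\ora{K}_2$ minor.

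That last requirement is the real crux, and it is where an inductive construction becomes natural: let $D_d$ itself play the role of the gadget at the next scale. That is, define a sequence where $D^{(0)}$ is a single vertex and $D^{(k+1)}$ is built from a long transitive tournament whose levels are copies of $D^{(k)}$, with enough levels that each vertex gains one more unit of out-degree per scale; after $d$ iterations we obtain a digraph with $\delta^+ \ge d$ whose every strongly-connected subdigraph is contained in a single bottom-level vertex, hence trivial, so it has no strong $\ora{K}_2$ minor. I expect the main obstacle to be the bookkeeping in this induction: one must check that a strongly-connected subdigraph of $D^{(k+1)}$ cannot span two distinct copies of $D^{(k)}$ (immediate from acyclicity of the quotient transitive tournament) and therefore lives inside one copy, where the inductive hypothesis applies; and separately that the out-degree genuinely accumulates. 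An alternative, slicker route avoiding induction is to take $D_d$ to be an orientation of a graph of large girth or a suitable acyclic-quotient construction directly, but the layered/inductive description above is the one I would write up, as it makes the ``no strong $\ora{K}_2$ minor'' claim essentially a one-line consequence of transitivity at every scale. Finally, one notes this does not contradict the dichromatic-number statement, since these gadget constructions have bounded dichromatic number (indeed the whole point is that they are ``locally acyclic''), consistent with the remark that $\chi$ \emph{does} force strong complete minors.
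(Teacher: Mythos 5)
There is a genuine gap that kills the proposal, and it is a matter of basic bookkeeping, not just polish.

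You want to build $D_d$ with $\delta^+(D_d)\ge d$ and $\strm(D_d)\le 1$ by a transitive blowup whose gadgets are again such digraphs, and you correctly observe that the acyclicity of the quotient transitive tournament reduces the problem to the gadgets themselves. But this reduction is a no-op: it says ``if you already have a digraph with min out-degree $d$ and no strong $\ora K_2$ minor, then a transitive blowup of it also has these properties,'' which is trivially true already for $N=1$ level and supplies no base case. Your proposed escape --- ``each vertex gains one more unit of out-degree per scale'' --- is false. In a transitive arrangement of gadget levels with all inter-level edges pointing forward, the vertices in the top (sink) level receive \emph{zero} additional out-degree from the inter-level edges; their out-degree is exactly $\delta^+$ of the gadget they sit in. So $\delta^+(D^{(k+1)})=\delta^+(D^{(k)})$, and starting from a single vertex this stays $0$ forever. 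There is no accumulation. Worse, your own summary statement contains its refutation: you claim every strongly-connected subdigraph of $D^{(d)}$ is a single vertex, i.e.\ $D^{(d)}$ is acyclic --- but an acyclic digraph necessarily has a sink, hence $\delta^+=0$, contradicting $\delta^+\ge d$. Any digraph with $\delta^+\ge 1$ contains a directed cycle and therefore a nontrivial strongly-connected subdigraph; this alone rules out the shape of conclusion you are aiming for.

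The deeper issue is that aiming for $\strm(D_d)\le 1$ (no strong $\ora K_2$ minor at all) is unnecessarily strong, and it is unclear whether it is achievable for large $\delta^+$. The theorem only requires a family where $\strm$ stays bounded while $\delta^+\to\infty$. The paper takes Thomassen's digraphs $D_k$ (built to avoid even directed cycles; see \Cref{constr:thomassen}): these have $\delta^+(D_k)=k$, and \Cref{prop:digraphs-outdeg} shows they contain no strong $\ora K_3$ minor, so $\strm(D_k)\le 2$ for all $k$. That already rules out $f(3)$. Note these digraphs \emph{do} contain strong $\ora K_2$ minors for $k\ge 2$ (e.g.\ take $X_1=\{a\}$ and $X_2=\{x,x',b\}$ with $a,b$ distinct in $A_x$; $x\to x'\to b\to x$ is a directed triangle, $a\to x$ and $x'\to a$ give edges in both directions), which is precisely why the paper argues about $\ora K_3$ rather than $\ora K_2$. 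To repair your write-up you would need either (i) a correct explicit construction with bounded $\strm$ --- at which point the transitive-blowup superstructure is dead weight --- or (ii) to adopt the paper's route and prove a structural ``$x\in U$'' lemma for branch sets meeting the attached gadgets $A_x\cup\{x'\}$, reducing to the previous level of the recursion.
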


In particular, we show that a construction due to Thomassen~\cite{Thomassen} has large minimum out-degree, but does not even contain strong $\ora{K}_3$ minors; we prove this in \Cref{sec:const} (see \Cref{prop:digraphs-outdeg}). On the other hand, we can show that large dichromatic number is sufficient to force strong minors in general digraphs. 

\begin{theorem}\label{thm:main-digraph}
Let $r \ge 1$ be an integer. If $D$ is a digraph with $\chi(D) \ge r4^r$, then $\strm(D) \ge r$. 
\end{theorem}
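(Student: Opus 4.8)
The plan is to argue by a repeated \emph{peeling} procedure: at each of $r-1$ stages we use a large dichromatic number to split off one branch set, together with a strongly-connected ``core'' whose dichromatic number has shrunk by a factor of $4$, and inside that core all remaining branch sets will live. First I would reduce to the strongly-connected case: the dichromatic number of a digraph equals the maximum of the dichromatic numbers of its strongly-connected components (colour the components optimally with disjoint palettes, and note that a monochromatic directed cycle lies inside a single component), so we may assume $D$ is strongly connected with $\chi(D)\ge r4^r$.

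The engine of the proof is the following lemma: \emph{if $H$ is strongly connected with $\chi(H)\ge 4M$, then there are disjoint $B,W\subseteq V(H)$ with $H[B],H[W]$ strongly connected, $\chi(H[W])\ge M$, and every vertex of $W$ having both an in-neighbour and an out-neighbour in $B$.} To prove it, fix $v_0\in V(H)$ and record for each $u$ the pair $(\dist(v_0,u),\dist(u,v_0))$; partition $V(H)$ into four classes by the parities of these two distances. Inside one class, along an edge $u\to u'$ the quantity $\dist(v_0,\cdot)$ changes by an even amount $\le 1$, hence is nonincreasing, while $\dist(\cdot,v_0)$ changes by an even amount $\ge -1$, hence is nondecreasing; so around a directed cycle in a class both coordinates are constant, i.e.\ the cycle lies in one ``level set'' $L_{p,q}=\{u:\dist(v_0,u)=p,\ \dist(u,v_0)=q\}$. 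This yields $\chi(H)\le 4\max_{p,q}\chi(H[L_{p,q}])$, so some $L_{p,q}$ has $\chi(H[L_{p,q}])\ge\chi(H)/4\ge M$; let $W$ be a strongly-connected component of $H[L_{p,q}]$ with $\chi(H[W])=\chi(H[L_{p,q}])$. Since $M\ge 4$ forces $|L_{p,q}|\ge 2$ we get $p,q\ge 1$, and then every $w\in W$ has an in-neighbour $a$ with $\dist(v_0,a)=p-1$ and an out-neighbour $b$ with $\dist(b,v_0)=q-1$, both outside $L_{p,q}\supseteq W$. For $B$ take the strongly-connected component of $H-W$ containing $v_0$: a shortest $v_0$-to-$a$ path stays at out-distance $<p$ and a shortest $b$-to-$v_0$ path stays at in-distance $<q$, so both avoid $W$.

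Granting the lemma, I would iterate it. Put $H_0=D$; given a strongly-connected $H_{i-1}$ with $\chi(H_{i-1})\ge(r-i+1)4^{r-i+1}$, apply the lemma with $M=(r-i)4^{r-i}$, which is legitimate since $4M=(r-i)4^{r-i+1}\le(r-i+1)4^{r-i+1}\le\chi(H_{i-1})$, obtaining a set $B_i$ and a strongly-connected $H_i$ (the digraph on the core $W$) with $\chi(H_i)\ge(r-i)4^{r-i}$, for $i=1,\dots,r-1$; then set $B_r=V(H_{r-1})$, nonempty since $\chi(H_{r-1})\ge 4$. The sets $B_1,\dots,B_r$ are pairwise disjoint, each induces a strongly-connected subdigraph, and for $i<j$ we have $B_j\subseteq V(H_i)$ while $B_i$ sends and receives an edge to/from every vertex of $V(H_i)$; hence there is an edge in each direction between $B_i$ and $B_j$. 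Thus $B_1,\dots,B_r$ are the branch sets of a strong $\ora{K}_r$ minor, i.e.\ $\strm(D)\ge r$.

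The step I expect to be the main obstacle is the construction of $B$ in the lemma: verifying that the strongly-connected component of $H-W$ containing $v_0$ really does contain, for \emph{every} $w\in W$, an in-neighbour and an out-neighbour of $w$. The ``$v_0$ reaches such a neighbour'' direction is immediate from the breadth-first structure, but the ``that neighbour can get back to $v_0$ without re-entering $W$'' direction is delicate and may force a more careful choice of the level set, of the component $W$, or of the base vertex $v_0$, or the preliminary step of passing to a strongly-connected subdigraph of large minimum in- and out-degree (for instance a dicritical subdigraph, in which every in- and out-degree is at least $\chi-1$). Everything else — the reduction to strong connectivity, the parity/level-set double counting, and the final assembly of the branch sets — is routine.
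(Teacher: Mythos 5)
Your double-BFS observation is correct and matches the paper's: both isolate a level intersection $L_{p,q}=\{u:\dist(v_0,u)=p,\ \dist(u,v_0)=q\}$ of dichromatic number at least $\chi(D)/4$ (the paper does this by two applications of a BFS-layer lemma rather than your parity argument, but it is the same set). The gap is exactly where you flagged it, and it is genuine: you take $B$ to be the strongly-connected component of $H-W$ containing $v_0$, and for each $w\in W$ you exhibit an in-neighbour $a\notin W$ at out-distance $p-1$ and an out-neighbour $b\notin W$ at in-distance $q-1$. You verify that $v_0$ reaches $a$ in $H-W$ and that $b$ reaches $v_0$ in $H-W$, but membership of $a$ (resp.\ $b$) in the \emph{strongly-connected} component $B$ additionally requires a path from $a$ back to $v_0$ (resp.\ from $v_0$ out to $b$) in $H-W$, and nothing provides one: the in-distance of $a$ and the out-distance of $b$ are unconstrained, so every return path from $a$ (forward path to $b$) may be forced through $W$. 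Passing to a dicritical subdigraph controls local in/out-degrees but not this global reachability around $W$, so that fix does not obviously close the gap.

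The paper reverses the order of operations to avoid this difficulty. It first builds the full $\ora{K}_r$-template (pairwise disjoint sets with an edge in each direction between any two), which needs only $\chi\ge r$ and is placed inside the level intersection. It then repairs the branch sets one at a time: for a non-strongly-connected branch set $V_1=\{x_1,\ldots,x_s\}$ inside the level intersection, it adjoins to $V_1$ the tree paths $x_\ell\to v_0$ in the in-BFS tree and $v_0\to x_{\ell+1}$ in the out-BFS tree; each such tree path meets the level intersection only in its endpoint, so the enlarged set $V_1'$ is strongly connected and still disjoint from the other branch sets. The connecting material is absorbed into the branch set itself rather than placed in an external dominator, which is what circumvents the one-sided-reachability obstruction that breaks your lemma. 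This is formalised as an induction on the number $m$ of already-strongly-connected branch sets in an ``$m$-partial strong $\ora{K}_r$ minor,'' repaired one per step with a factor-of-$4$ loss in $\chi$ each time.
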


Note that according to the second statement of Theorem \ref{thm:main-tourn-1} there is a digraph $S_r$ such that $\chi(S_r) = r$ and $\strm(S_r) \le r - 1$. The digraph $S_r$ is a tournament.
In addition, we show in \Cref{sec:const} that for every $r \ge 2$ there exists a digraph $G_r$ with $\chi(G_r) = r$ and $\strm(G_r) \le r - 1$. In comparison with $S_r$, the digraph $G_r$ is quite sparse.
We believe the exponential dependence on $r$ in the first part of this theorem is far from the truth.

\subsection{Organization}
The remainder of this paper is organized as follows. \Cref{sec:notation} is devoted to additional definitions and notation.  In \Cref{sec:const} we analyze constructions of a tournament and digraph that have dichromatic number $r$, but do not contain strong clique minors on $r$ vertices. We additionally prove \Cref{thm:negative-digraph} by analyzing Thomassen's construction. In \Cref{sec:tourn} we prove \Cref{thm:main-tourn-1} and \Cref{thm:main-tourn-2}, which concern finding strong minors in tournaments under the assumption of large dichromatic number and large minimum out-degree, respectively. In \Cref{sec:digraphs}, we prove our results concerning digraphs. First, we show that the assumption of large minimum out-degree is not sufficient to guarantee strong clique minors. Second, we prove \Cref{thm:main-digraph}, which shows that large dichromatic number is sufficient. Finally, we conclude in \Cref{sec:final} with some remarks and open problems.

\section{Notation and terminology}\label{sec:notation}
Here we provide some additional notation and terminology that will be used throughout the paper. Any further notation shall be introduced as necessary.

Let $D$ be a digraph. 
We denote by $V(D)$ and $E(D)$ the vertex and edge set of  $D$, respectively.
 For a vertex $v$ in  $D$, we write $N^+_D(v)$ and $N^-_D(v)$ for the out-neighbourhood and in-neighbourhood of $v$ in $D$, respectively. We let $d^+_D(v) = |N^+_D(v)|$ and $d^-_D(v) = |N^-_D(v)|$ denote the out-degree and in-degree of $v$. We let $\delta^+(D) = \min_{v \in V(D)} d^+_D(v)$ denote the minimum out-degree of $D$. We shall always omit the subscript `$D$' when the digraph is clear from context. Given a subset $X \subseteq V(D)$, we denote by $D[X]$ the subdigraph of $D$ induced by $X$.  We say that a subset of vertices $X \subseteq V(D)$ is \emph{acyclic} if the subdigraph $D[X]$ induced by $X$ contains no directed cycle. Alternatively, we may say that $X$ (or the induced subdigraph $D[X]$) is \emph{transitive}.

For disjoint sets $X, Y \subseteq V(D)$, we write $X \rightarrow_D Y$ provided every edge of $D$ with an endpoint in $X$ and an endpoint in $Y$ is oriented from $X$ to $Y$. Thus, if $D$ is a tournament, this means that every possible edge between $X$ and $Y$ is oriented from $X$ to $Y$. If $X = \{x\}$, then we simply write $x \rightarrow_D Y$ (and similarly if $Y = \{y\}$). We shall omit the subscript `$D$' when the digraph is understood from context.

A set  $X \subset V(D)$ is a \emph{cut-set} if $D - X$ is not strongly-connected. A \emph{strongly-connected component} in a digraph $D$ is a strongly-connected subdigraph of $D$ that is maximal with this property. Every digraph can be partitioned into its strongly-connected components, say $V(D) = S_1 \cup \cdots \cup S_t$ where $D[S_i]$ is strongly-connected for each $i \in [t]$ and $S_i \rightarrow S_j$ for all $1 \le i < j \le t$ (see \Cref{lem:components}). If $t \ge 2$, we then have $S := \bigcup_{i < t} S_{i} \rightarrow S_{t}$. We call $S$ the \emph{source set} and $S_{t}$ the \emph{sink set} of $D$. Moreover, if $X = \{x_1, \ldots, x_t\} \subseteq V(D)$ is a transitive set with $x_i \rightarrow x_j$ for all $1 \le i < j \le t$, we refer to $x_{1}$ and $x_{t}$ as the \emph{source} and \emph{sink} of $X$, respectively.

Finally, given a positive integer $k$, a digraph is $k$-strongly-connected if it has at least $k + 1$ vertices and if it remains strongly-connected upon the removal of any set of at most $k-1$ vertices.


\section{Preliminaries}
Here we collect some simple results that will be applied in several places throughout the paper. The first allows us to define source sets and sink sets of non-strongly-connected digraphs. The second allows us to assume that our digraphs are strongly-connected when considering dichromatic number. 

\begin{lemma}\label{lem:components}
The vertex set of every digraph $D$ can be partitioned into nonempty sets $S_1, \ldots, S_t$ such that $D[S_i]$ is strongly-connected for every $i$, and $S_i \rightarrow S_j$ for all $1 \le i < j \le t$. 
\end{lemma}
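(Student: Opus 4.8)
The plan is to prove this by induction on $|V(D)|$, by organising the strongly-connected components of $D$ according to the partial order they inherit from reachability; this is essentially the classical fact that the condensation of a digraph is acyclic and hence admits a topological ordering.

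First I would recall that the relation on $V(D)$ defined by ``$u$ is reachable from $v$ by a directed path and $v$ is reachable from $u$ by a directed path'' is an equivalence relation (reflexivity uses the trivial path), whose classes are precisely the strongly-connected components of $D$; they are nonempty and partition $V(D)$. Let $\cC$ denote the set of these components and define a relation $\preceq$ on $\cC$ by $C \preceq C'$ if there is a directed path in $D$ from some vertex of $C$ to some vertex of $C'$. This is clearly reflexive and transitive, and it is antisymmetric: if $C \preceq C'$ and $C' \preceq C$ with $C \neq C'$, then picking $u \in C$, $v \in C'$ we get directed paths from $u$ to $v$ and from $v$ to $u$, so $u$ and $v$ are equivalent and $C = C'$, a contradiction. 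Hence $(\cC,\preceq)$ is a finite poset. This antisymmetry step — which is really the maximality of strongly-connected components — is the one point that requires any thought; everything else is bookkeeping.

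For the induction, if $|V(D)| = 1$ the claim is trivial with $t = 1$. Otherwise choose a $\preceq$-minimal $S_1 \in \cC$. I claim $S_1 \rightarrow V(D) \setminus S_1$: if $xy \in E(D)$ with $y \in S_1$ and $x \notin S_1$, then the component $C_x$ of $x$ satisfies $C_x \preceq S_1$, so by minimality $C_x = S_1$, contradicting $x \notin S_1$; thus every edge between $S_1$ and its complement is oriented out of $S_1$. Now apply the induction hypothesis to $D' := D[V(D)\setminus S_1]$, obtaining a partition $V(D') = S_2 \cup \cdots \cup S_t$ into nonempty sets with $D'[S_i]$ strongly-connected for each $i \ge 2$ and $S_i \rightarrow_{D'} S_j$ for all $2 \le i < j \le t$. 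Finally I would verify that $S_1,\dots,S_t$ works: each $S_i$ is nonempty; $D[S_i]$ is strongly-connected for $i = 1$ since it is an equivalence class, and for $i \ge 2$ by induction since $D[S_i] = D'[S_i]$; and for $1 \le i < j \le t$, the case $i = 1$ follows from $S_1 \rightarrow V(D)\setminus S_1 \supseteq S_j$, while for $i \ge 2$ the edges of $D$ between $S_i$ and $S_j$ are exactly those of $D'$, so $S_i \rightarrow_{D'} S_j$ gives $S_i \rightarrow S_j$. This completes the induction, and the only substantive ingredient was the antisymmetry of $\preceq$, i.e.\ that distinct strongly-connected components cannot be mutually reachable.
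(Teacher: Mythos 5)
Your proof is correct and takes essentially the same approach as the paper: both define the strongly-connected components via the mutual-reachability equivalence relation and establish acyclicity of the condensation through the same maximality argument (mutual reachability between distinct components would merge them). The only cosmetic difference is in how the linear order is extracted — the paper observes that the condensation digraph, having no directed cycle, can be topologically ordered, while you phrase the condensation as a poset and peel off $\preceq$-minimal components by induction; these are two routine ways of topologically sorting a DAG.
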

\begin{proof}
Say that two vertices $x, y$ are strongly-connected if there is a directed path from $x$ to $y$ and a directed path from $y$ to $x$ in $D$. This is clearly an equivalence relation, and we may take $S_1, \ldots, S_t$ to be the pairwise-disjoint equivalence classes. For the second claim, we may assume $t \ge 2$, since if $t=1$ it holds vacuously. Define a digraph $H$ whose vertex set is $[t]$ and we join $i$ to $j$ whenever there is an edge from $S_i$ to $S_j$ in $D$. If we cannot order the $S_i$'s transitively as claimed, then $H$ contains a directed cycle $C = i_1i_2\ldots i_ki_1$ for some $k \ge 2$. But then $D[\bigcup_{j \in V(C)} S_j]$ is strongly-connected, which contradicts the maximality of $S_1, \ldots, S_t$. 
\end{proof}

\begin{lemma}\label{lem:chi-subgraph}
If $D$ is a digraph with $\chi(D) \ge r$, then $D$ contains a strongly-connected subdigraph $D'$ with $\chi(D') \ge r$.
\end{lemma}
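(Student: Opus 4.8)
The plan is to exploit the decomposition supplied by \Cref{lem:components} together with the elementary fact that a directed cycle cannot pass between two distinct strongly-connected components. First I would apply \Cref{lem:components} to partition $V(D)$ into nonempty sets $S_1,\dots,S_t$ with each $D[S_i]$ strongly-connected and $S_i \rightarrow S_j$ whenever $i<j$. The whole lemma then reduces to the inequality
\[
\chi(D) \;\le\; \max_{i \in [t]} \chi(D[S_i]),
\]
since if this holds then, because $\chi(D) \ge r$, some index $i$ satisfies $\chi(D[S_i]) \ge r$, and $D' := D[S_i]$ is a strongly-connected subdigraph with $\chi(D') \ge r$, as required. (The reverse inequality is immediate, as each $D[S_i]$ is a subdigraph of $D$, but we do not need it.)

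To prove the displayed inequality, set $m := \max_{i} \chi(D[S_i])$ and colour each $D[S_i]$ with an optimal colouring into $\chi(D[S_i]) \le m$ acyclic classes, all drawn from a common palette $\{1,\dots,m\}$; taking the union over $i$ yields a colouring of $V(D)$ with $m$ colour classes. The key claim is that every directed cycle of $D$ is contained in a single $S_i$: if a directed cycle met at least two of the classes, then letting $j$ be the largest index with $S_j$ meeting the cycle, the edge of the cycle leaving $S_j$ would go from $S_j$ to some $S_i$ with $i<j$, contradicting $S_i \rightarrow S_j$. Given this, each colour class of the combined colouring is acyclic in $D$: a monochromatic directed cycle would live inside some $S_i$, where by construction the corresponding class is acyclic. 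Hence $D$ has a colouring into $m$ acyclic sets, i.e. $\chi(D)\le m$.

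I do not anticipate a genuine obstacle here; the only point that needs to be stated carefully is the observation that a directed cycle is confined to one strongly-connected component, which is exactly where the ordering property $S_i \rightarrow S_j$ for $i<j$ from \Cref{lem:components} is used. Everything else is bookkeeping about combining colourings on a shared palette.
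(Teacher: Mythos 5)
Your proposal is correct and follows essentially the same route as the paper: decompose $D$ via \Cref{lem:components}, observe that combining optimal colourings of the strongly-connected parts over a common palette yields a valid colouring of $D$ (since no directed cycle crosses between parts, thanks to the ordering $S_i \rightarrow S_j$), and conclude that some part must have dichromatic number at least $r$. The paper phrases this as a contradiction argument, but the content is the same, with your version merely making the "cycles stay within one component" step explicit.
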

\begin{proof}
    If $D$ itself is strongly-connected, then we are done. Otherwise, apply \Cref{lem:components} and let $V(D) = S_1 \cup \cdots \cup S_t$ be a partition of the vertex set of $D$ into strongly-connected components, such that $S_i \to S_j$ for all $1 \le i < j \le t$, and $t \ge 2$. Assume that we have $\chi(D[S_i]) \le r-1$ for all $i \in [t]$. Colour the vertices in each $S_i$ with colours in $[r-1]$. Then this produces an $(r-1)$-colouring of $D$, a contradiction. 
\end{proof}

\section{constructions}\label{sec:const}

In this section, we examine the following constructions: a tournament $S_r$ with $\chi(S_r) = r$ and $\strm(S_r) \le r - 1$, a digraph $G_r$ with $\chi(G_r)= r$ and $\strm(G_r) \le r-1$, and a digraph $D_k$ with out-degree $k$ and no strong $\ora{K}_3$ minor.

\subsection{Digraphs with large dichromatic number and no large strong minor}\label{subsec:const-tourn}
Given digraphs $H, G, D$, let $\Delta(H, G, D)$ be the digraph obtained by taking vertex-disjoint copies of $H$, $G$, and $D$, 
adding all possible undirected edges with at most one endpoint in each of $V(H), V(G),$ and $V(D)$, and orienting the new edges so that $H \rightarrow G$, $G\rightarrow D$, and $D \rightarrow H$.
\begin{construction}\label{const:tourn}
Let $S_1 = K_1$ (i.e., the single vertex complete digraph with no edges). For $r \ge 2$ we set $S_r = \Delta(K_1, S_{r-1}, S_{r-1})$.
\end{construction} 
We remark that this construction has appeared before in~\cite{TandC}. It is not difficult to check by induction that $\chi(S_r) = r$ for all $r \ge 1$. The following shows that, in general, dichromatic number $r$ is not sufficient to guarantee a strong $\ora{K}_r$ minor.
\begin{figure}
	\begin{tikzpicture}[scale=0.5]

	\draw  (4.5, 6) circle (2pt);
	\node at (5,6) {};

	\draw (1.7,3.1) arc(110:15:1.1cm and 1.1cm) -- (4.4,5.85) -- (1.7,3.1) ;
	\draw (2,2) circle (1cm);
	\draw[->, very thick]  (3.85,5) -- (2.75,3.2) ;
	\node at (2,2) {$S_{r-1}$};
	
	\draw (4.6,5.85) -- (7.3,3.1)  arc(70:165:1.1cm and 1.1cm) -- (4.6,5.85) ;
	\draw (7,2) circle (1cm);
	\draw[->, very thick]  (6.25,3.2) --(5.15,5);
	\node at (7,2) {$S_{r-1}$};
	
	\draw (3.1,2.1) arc(7:-50:1.1cm and 1.1cm) -- (6.25,1.13)  arc(230:173:1.1cm and 1.1cm) -- (3.1,2.1); 
	
	\draw[->, very thick]  (3.4,1.75) -- (5.3,1.75) ;
	\draw[->, very thick]  (3.4,1.45) -- (5.3,1.45) ;

	\end{tikzpicture}\caption{The tournament $S_r$}\end{figure}
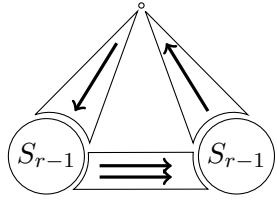
\begin{proposition}\label{prop:constr-1}
For every $r \ge 2$ we have $\chi(S_r) = r$ and ${\rm sm}(S_r) \le r - 1$ for every $r \ge 2$.
\end{proposition}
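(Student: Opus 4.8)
The first statement, $\chi(S_r) = r$, is asserted in the excerpt to follow by a routine induction, so I focus on the bound $\strm(S_r) \le r-1$. The plan is to prove this by induction on $r$, the base case $r = 2$ being immediate: $S_2 = \Delta(K_1, K_1, K_1)$ is the directed triangle $\ora{C}_3$, which is strongly-connected but is not $\ora{K}_2$ (no pair has edges in both directions), so $\strm(S_2) = 1 = r-1$. For the inductive step, suppose for contradiction that $S_r$ contains a strong $\ora{K}_r$ minor, witnessed by branch sets $X_1, \dots, X_r$, each inducing a strongly-connected subdigraph, with at least one edge in each direction between every pair. Write $S_r = \Delta(K_1, A, B)$ where $A$ and $B$ are disjoint copies of $S_{r-1}$, and let $w$ be the single vertex of the $K_1$; recall the orientation is $w \to A$, $A \to B$, $B \to w$.

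\textbf{Key step: no branch set can straddle the three parts arbitrarily.} The crux is to show that each branch set $X_i$ must be contained in $A \cup \{w\}$, or in $B \cup \{w\}$, or be exactly $\{w\}$ — i.e., no branch set can meet both $A$ and $B$. Indeed, if some $X_i$ meets both $A$ and $B$, then since all edges between $A$ and $B$ go from $A$ to $B$, and the only way back is through $w$ (via $B \to w \to A$), strong-connectivity of $D[X_i]$ forces $w \in X_i$; moreover $X_i$ must then use the edges $A\cap X_i \to B\cap X_i$ and route back $B\cap X_i \to w \to A \cap X_i$. This is possible for \emph{one} branch set, but I claim it cannot happen together with the requirement that all other branch sets still form a strong $\ora{K}_{r-1}$ minor among themselves. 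After removing such an $X_i$ (which has swallowed $w$), the remaining $r-1$ branch sets partition a subset of $A \cup B$, with all mutual edges present in both directions; but every edge between $A$ and $B$ points from $A$ to $B$, so for any two remaining branch sets $X_j, X_k$, if one meets $A$ and the other meets $B$ in a way requiring an edge from the $B$-side to the $A$-side, there is none. Working this out carefully shows the remaining branch sets must each lie entirely within $A$ or entirely within $B$; and since we need edges in both directions between an $A$-contained set and a $B$-contained set, in fact all remaining branch sets lie on one side, giving a strong $\ora{K}_{r-1}$ minor inside $A \cong S_{r-1}$ or inside $B \cong S_{r-1}$ — contradicting the inductive hypothesis $\strm(S_{r-1}) \le r-2$.

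\textbf{Handling the case where $w$ is alone or absent.} It remains to treat the possibility that no branch set straddles $A$ and $B$. Then each branch set lies in $A \cup \{w\}$ or $B \cup \{w\}$, and at most one contains $w$. If $\{w\}$ is one of the branch sets (or $w$ sits inside a branch set otherwise contained in, say, $A \cup \{w\}$): group the branch sets by which side they lie in. If two branch sets lie in $A$ and two in $B$, the same "no edge from $B$-side back to $A$-side except through $w$" obstruction again forces a contradiction unless essentially all lie on one side, again contradicting the inductive hypothesis applied to that copy of $S_{r-1}$ (noting that a branch set using $w$ together with vertices of $A$ is still a strongly-connected subdigraph of $\Delta(K_1, A, \cdot)$ and can be analysed as a strong-minor branch set of an auxiliary digraph, or one checks directly that $w$ contributes nothing toward connections that $A$ could not already provide, because $w \to A$ only). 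The bookkeeping here is the part most prone to case-explosion.

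\textbf{Main obstacle.} The genuine difficulty is the careful case analysis showing that the branch sets cannot be "spread out" across the three blocks $\{w\}$, $A$, $B$ — in particular ruling out configurations where one branch set absorbs $w$ and uses it as a bridge while others cluster on both sides. I expect the clean way to organize this is to contract: observe that contracting $A$ to a point and $B$ to a point (legitimate, as each is strongly-connected) would turn $S_r$ into $\ora{C}_3$, so any strong $\ora{K}_r$ minor of $S_r$ projects to a strong minor of $\ora{C}_3$ with at most $3$ nonempty parts; since $r \ge 3$ this already forces at least two branch sets to live inside a single block, and then a short argument confines \emph{all but one} block's worth of branch sets to one copy of $S_{r-1}$, where induction finishes. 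Making this projection argument precise — specifically, that merging branch sets according to the contraction preserves the needed edge-in-each-direction property — is the step I would write out in full detail.
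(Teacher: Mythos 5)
Your argument is correct and is essentially the paper's own proof: the decisive observations are identical — a strongly-connected branch set avoiding the $K_1$-vertex $w$ cannot meet both copies of $S_{r-1}$ (all $A$–$B$ edges go one way), no two branch sets can lie in different copies (edges in both directions would be impossible), and at most one branch set contains $w$, which yields $\strm(S_r)\le \strm(S_{r-1})+1\le r-1$. The only loose spots are inessential: a branch set containing $w$ together with vertices of just one copy can never be strongly connected (so that subcase is vacuous), and the concluding contraction-to-$\ora{C}_3$ reorganization is not needed (and, as stated, not quite a minor projection), since your main case analysis already closes the induction.
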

\begin{proof}
As mentioned, $\chi(S_r) = r$ is easy to verify by induction on $r$. We turn to determining $\strm(S_r)$. Note that $S_2$ is simply a directed triangle and so clearly $\strm(S_2) = 1$. Let $r \ge 3$ and suppose the result holds for smaller values. Since $S_r = \Delta(K_1, S_{r-1}, S_{r-1})$ let us write $A$ and $B$ for the two copies of $S_{r-1}$, and $v$ for the copy of $K_1$. Clearly, at most one branch set from any strong minor in $S_r$ contains $v$. If $X \subseteq V(S_r)$ is a strongly-connected subset that does not contain $v$, then $X$ cannot contain vertices from both $A$ and $B$ (since $A \rightarrow B$). Similarly, there cannot be one branch set contained in $A$ and another contained in $B$. Since we can potentially add a new branch set in $S_r$ containing $v$ we have
\[
    {\rm sm}(S_r) \le {\rm sm}(S_{r-1}) + 1 \le r - 1,
\]
as claimed.\end{proof}

\begin{construction}\label{constr:digraphs}
Let $G_1 = K_1$ and suppose $G_r$ has been constructed for some $r \ge 1$. To form $G_{r+1}$, consider first a transitive tournament $T_{r+1}$ on $r+1$ vertices. For each directed edge $e$ of $T_{r+1}$ add pairwise disjoint copies of $G_r$, denoted by $G^e_r$, such that $V(G_r^e) \cap V(T_{r+1}) = \varnothing$ for all $e$. Finally, for each directed edge $e = vw$ in $T_{r+1}$ and each vertex $u \in V(G_r^e)$ create a directed triangle $uvw$.
\end{construction}

\begin{proposition}\label{prop:constr-2}
For every $r \ge 2$ we have $\chi(G_r) = r$ and ${\rm sm}(G_r) \le r - 1$.
\end{proposition}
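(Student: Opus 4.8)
The plan is to prove both statements by induction on $r$, mirroring the structure of the proof of Proposition \ref{prop:constr-1}. The base case $r=2$ is immediate: $G_2$ is built from the transitive tournament $T_2$ (a single edge $vw$) by adding one vertex $u$ (the copy of $G_1 = K_1$ attached to the unique edge $e=vw$) and the directed triangle $uvw$; this is just a directed triangle, which has dichromatic number $2$ and contains no strong $\ora{K}_2$ minor, so $\strm(G_2)=1$.

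For the chromatic number, I would argue $\chi(G_{r+1}) \le r+1$ by exhibiting an explicit colouring: colour the $i$-th vertex of the transitive tournament $T_{r+1}$ (in the transitive order) with colour $i$, and for each edge $e = v_iv_j$ with $i<j$, colour the copy $G_r^e$ using the $r$ colours of $[r+1] \setminus \{i\}$ via the inductive $r$-colouring of $G_r$ (which exists since $\chi(G_r)=r$). Any directed cycle through a triangle-vertex $u \in V(G_r^e)$ together with $v_i, v_j$ uses colours $j$ (on $v_j$) and $i$ is avoided inside $G_r^e$, so each colour class is acyclic; cycles lying entirely inside one $G_r^e$ are handled by induction, and cycles lying entirely inside $T_{r+1}$ do not exist. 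For the lower bound $\chi(G_{r+1}) \ge r+1$: in any $r$-colouring, since $T_{r+1}$ is transitive and the triangles force structure, I would locate two vertices $v_i, v_j$ of $T_{r+1}$ with the same colour and $i<j$ (pigeonhole on $r+1$ vertices with $r$ colours, picking the pair appropriately so that $v_iv_j \in E(T_{r+1})$), and observe that then the copy $G_r^{v_iv_j}$ must be properly coloured with the remaining $\le r-1$ colours — indeed every vertex $u$ of $G_r^{v_iv_j}$ forms a triangle with $v_i,v_j$, so $u$ cannot receive the common colour of $v_i,v_j$ — contradicting $\chi(G_r) = r$ by induction.

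For the strong-minor bound $\strm(G_{r+1}) \le r$, I would analyze how a strong $\ora{K}_s$ minor can sit inside $G_{r+1}$. The key structural observations are: each triangle-vertex $u \in V(G_r^e)$ has exactly two neighbours, $v$ and $w$, outside $V(G_r^e)$, and moreover the only edges leaving the vertex set $V(G_r^e)$ go to $\{v,w\}$ where $e = vw$; also $T_{r+1}$ is transitive, so no branch set can use two vertices of $T_{r+1}$ and be strongly connected unless it also travels through some $G_r^e$. I would show that at most one branch set of a strong minor can "use" the transitive-tournament vertices in a way that connects distinct copies $G_r^e$, and that all other branch sets are essentially confined: each remaining branch set either lies inside a single $G_r^e$ (together possibly with one or two of the $T_{r+1}$ vertices incident to $e$) or consists of $T_{r+1}$ vertices alone in a transitive arrangement. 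The counting then gives $\strm(G_{r+1}) \le \strm(G_r) + (\text{small constant}) \le (r-1) + 2$, which is not quite good enough, so the real work is to show the contribution is exactly $+1$, i.e. that one cannot simultaneously gain a branch set from the $T_{r+1}$-vertices and from a fresh part of some $G_r^e$: the point is that any branch set meeting two of the copies $G_r^{e}, G_r^{e'}$ must pass through $T_{r+1}$, and then the sink/source structure of $T_{r+1}$ obstructs having edges in both directions to too many other branch sets.

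The main obstacle I anticipate is exactly this last point: controlling the interaction between branch sets that live "across" several copies $G_r^e$ via the transitive tournament $T_{r+1}$, and pinning down that the net gain over $\strm(G_r)$ is only $1$ rather than a larger constant. Because $T_{r+1}$ has $r+1$ vertices, a naive bound loses too much; one needs to exploit that a strongly-connected branch set using vertices $v_i, v_j$ of $T_{r+1}$ with $i<j$ must "return" from $v_j$ to $v_i$ through some copy $G_r^{e}$, and that this ties up that copy, preventing it from hosting an independent branch set — a careful bookkeeping argument balancing "copies used for returning" against "branch sets gained". I would set this up as a short sequence of claims about which configurations of branch sets are realizable, and expect the dichromatic-number half to be routine while this strong-minor half carries the weight of the proof.
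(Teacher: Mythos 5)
Your dichromatic-number half is essentially correct and matches the paper's argument: the lower bound via pigeonhole on the tournament vertices plus a monochromatic triangle/forced colour inside the attached copy is exactly the paper's, and your upper-bound colouring works, though the paper's is simpler (reuse the same $r-1$ colours on all copies and spend one fresh colour on all of $T_r$; since the only edges leaving or entering a copy go through the two endpoints of its tournament edge, any directed cycle avoiding $T_r$ stays inside a single copy). The strong-minor half, however, contains a genuine gap, and you say so yourself: your bookkeeping only yields $\strm(G_{r+1})\le \strm(G_r)+2$, and the step you defer --- showing the gain is only $+1$ --- is precisely the content of the proposition, not a detail. Worse, the intermediate claim you propose to prove (that at most one branch set can use tournament vertices to connect distinct copies) is false: for instance the sets $\{x_1,x_3,x_5\}\cup\{u_{13},u_{35}\}$ and $\{x_2,x_4,x_6\}\cup\{u_{24},u_{46}\}$, where $u_{ij}$ lies in the copy attached to $x_ix_j$, are disjoint, each is strongly connected, each spans two copies through tournament vertices, and there are edges in both directions between them. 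So no argument along the lines you sketch can pin the loss down to a single extra branch set.

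The paper avoids the ``$\strm(G_{r-1})+\text{constant}$'' framing altogether. The key observation is that if a branch set meets a copy $G_{r-1}^e$ ($e=xy$) and also contains a vertex outside it, then it must contain both $x$ and $y$, because the only edges leaving the copy go to $x$ and the only edges entering it come from $y$. This yields a clean dichotomy. Case 1: some branch set lies entirely inside one copy $G_{r-1}^e$; then every other branch set either also lies inside that copy or contains both $x$ and $y$, and by disjointness at most one branch set can contain $\{x,y\}$, so $t\le \strm(G_{r-1})+1\le r-1$ by induction. Case 2: every branch set contains both endpoints of some edge of $T_r$; disjointness of the branch sets forces these edges to be pairwise disjoint, and a matching in $T_r$ has at most $\lfloor r/2\rfloor\le r-1$ edges, so $t\le r-1$. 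It is this matching bound in the second case --- not a ``net gain of one over the previous level'' --- that closes the argument, and it is exactly the idea missing from your proposal.
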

\begin{proof}
Note that $G_2$ is a directed triangle, so we have $\chi(G_2) = 2$ and ${\rm sm}(G_2) = 1$. So suppose $r \ge 3$ and the result holds for smaller values. Since each copy of $G_{r-1}$ in $G_r$ can be $(r-1)$-coloured, we can $r$-colour $G_r$ by using one extra colour for the transitive tournament. On the other hand, we cannot $(r-1)$-colour $G_r$: assume there is an $(r-1)$-colouring of $G_r$, then some two vertices $x, y  \in V(T_r)$  receive the same colour, say $1$. Let $e = xy$ be the corresponding edge in $T_r$. As $\chi(G_{r-1}^e) = r - 1$, the colour $1$ must appear in $G_{r-1}^e$, and so we obtain a directed triangle in colour $1$, a contradiction.


Next, we shall show that ${\rm sm}(G_r) \leq  r - 1$. Let $t={\rm sm}(G_r)$.
Consider  a strong $\ora{K}_t$ minor in $G_r$ with branch sets $B_1, \ldots, B_t$. 
Since $T_r$ is not strongly-connected, $B_i \not\subseteq V(T_r)$ for each $i=1, \ldots, t$. 
Observe also that if  for  some edge $e=xy$ of $T_r$, $B_i\cap V(G_{r-1}^e)\neq \emptyset$ and $B_i\setminus   V(G_{r-1}^e)\neq \emptyset$, then $B_i$ contains both $x$ and $y$; otherwise, $B_i$ does not induce a strongly-connected digraph. There are two cases to consider.

\textbf{Case $1$}.  \emph{There is $i \in [t]$ such that $B_i \subseteq V(G_{r-1}^e)$ for some edge $e=xy$ of $T_r$.}

\noindent Without loss of generality, assume $i =t$. Let $V'=V(G_{r-1}^e)$. 
Then any other $B_j$, $j=1, \ldots, t-1$ should either be contained in  $V'$ or contain $x$ or $y$. 
 If all $B_i$'s are contained in $V'$, then by induction $t\leq r-2$.  
 Thus, we can assume that there is $j\in \{1, \ldots, t-1\}$ such that $B_j$ contains $x$ or $y$. If $B_j$ intersects $V'$, we have that $B_j$ contains both $x$ and $y$ by an observation before the statement of the case. If $B_j$ does not contain vertices from $V'$ it also must contain both $x$ and $y$, otherwise the edges between $B_t$ and $B_j$ go in one direction only.
 Since the $B_i$'s are pairwise disjoint,  there is exactly one such index $j$,  such that $B_j$ is not contained in $V'$. 
 The number of branch sets from $B_1, \ldots, B_t$ that are contained in $V'$ is by  induction  at most $r-2$. 
 Thus ${\rm sm}(G_r)=t \leq r-2+1 = r-1$.

\textbf{Case $2$}.  \emph{For each branch set $B_i$ there exists an edge $e$ of $T_r$ such that $B_i$ contains both vertices of $e$.}

\noindent Since the $B_i$'s are pairwise disjoint, the respective edges $e$ must also be pairwise disjoint. Since $T_r$ has at most $r/2\leq r-1$ pairwise disjoint edges, $t\leq r-1$. This completes the second case, and thus the proof.
\end{proof}

\subsection{Digraphs with large out-degree and no large strong minor}\label{subsec:digraphs-degree}

Here we show that large out-degree is not sufficient to guarantee strong $\ora{K}_r$ minors in general digraphs, and thus prove \Cref{thm:negative-digraph}. To this end, we need Thomassen's~\cite{Thomassen} construction of digraphs with large out-degree and no even cycle. The construction proceeds as follows. 

\begin{construction}\label{constr:thomassen}
Let $D_1$ be a directed triangle. Suppose $D_k$ is defined by induction. Now, add pairwise disjoint sets $A_x \cup \{x'\}$ where $|A_x| = k + 1$, for every vertex $x \in V(D_k)$, all of them disjoint from $V(D_k)$. We then add all directed edges between $A_x$ and $N_{D_k}^+(x) \cup \{x\}$ such that $A_x \rightarrow \{x\} \cup N_{D_k}^+(x)$, and $x \rightarrow x'$. Finally, add all edges from $x'$ to $A_x$, and denote by $D_{k+1}$ the resulting digraph.
\end{construction}
Then $D_{k}$ clearly has minimum out-degree $k$, and one can check that it contains no even directed cycle (in particular, it contains no pair of vertices $x, y$ with $xy, yx \in E(D_{k+1})$). The following asserts that $D_k$ does not contain large strong clique minors. In particular, large out-degree alone is not sufficient to guarantee strong minors in digraphs.
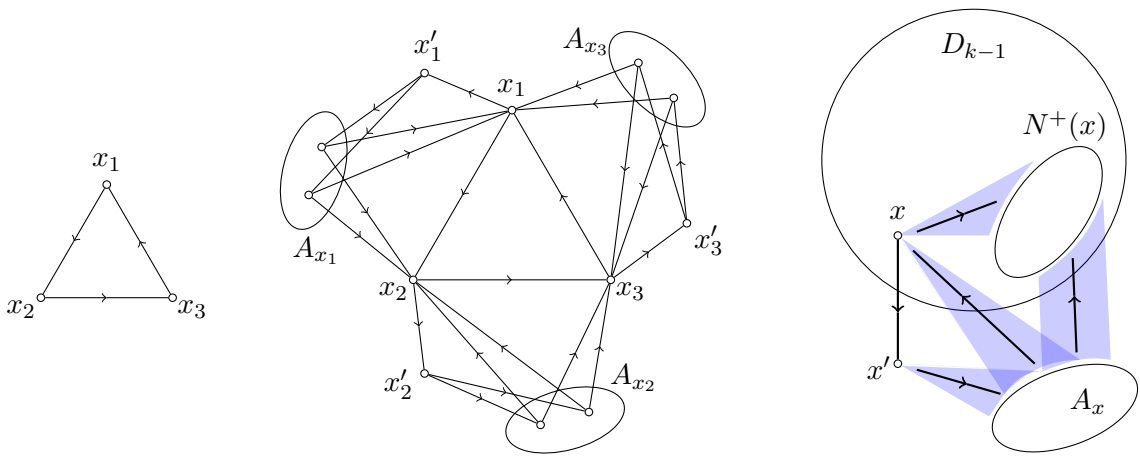
\begin{figure}[H]
	\begin{tikzpicture}
	\foreach \s in {1,...,3}
	{
		\node[draw, circle, inner sep=1pt] (x\s)at ({360/3* (\s - 1) + 90}:1cm) {};
		\node at ({360/3 * (\s - 1) + 90}:1.3cm) {$x_\s$};\
	}
	\draw[->]  ($(x1)!0.5!(x2)$)--(x2) (x1)--($(x1)!0.5!(x2)$);
	\draw[->] ($(x2)!0.5!(x3)$)--(x3) (x2)--($(x2)!0.5!(x3)$);
	\draw[->] ($(x3)!0.5!(x1)$)--(x1) (x3)--($(x3)!0.5!(x1)$);
	\node at (0,-2.5) {};
	\end{tikzpicture}\qquad
	\begin{tikzpicture}
	\def \n {3}
	
	\foreach \s in {1,...,\n}
	{
		\node[draw, circle, inner sep=1pt] (x\s)at ({360/\n * (\s - 1) + 90}:1.5cm) {};
		\node at ({360/\n * (\s - 1) + 90}:1.8cm) {$x_\s$};
		\node[draw, circle, inner sep=1pt] (x'\s)at ({360/\n * (\s - 1) + 120}:2.3cm) {};
		\node at ({360/\n * (\s - 1) + 115}:2.6cm) {$x'_\s$};
		\node[draw,circle, inner sep=1pt] (A\s)at ({360/\n * (\s - 1) + 158}:2.7cm)  {};
		\node[draw,circle, inner sep=1pt] (B\s)at ({360/\n * (\s - 1) + 172}:2.7cm)  {};
		\draw[rotate={(360/\n* (\s-1)+165)}] (0:2.7cm) ellipse (0.4cm and 0.8cm); 
		\node at ({360/\n * (\s - 1) + 188}:2.6cm) {$A_{x_\s}$};
		
		\draw[->] ($(x\s)!0.5!(x'\s) $)-- (x'\s)  (x\s)--($(x\s)!0.5!(x'\s) $);
		\draw[->] ($(x'\s)!0.5!(A\s) $)-- (A\s)  (x'\s)--($(x'\s)!0.5!(A\s) $); 
		\draw[->] ($(x'\s)!0.5!(B\s) $)-- (B\s)  (x'\s)--($(x'\s)!0.5!(B\s) $); 
		\draw[->] ($(A\s)!0.5!(x\s) $)-- (x\s)  (A\s)--($(A\s)!0.5!(x\s) $);
		\draw[->] ($(B\s)!0.5!(x\s) $)-- (x\s)  (B\s)--($(B\s)!0.5!(x\s) $);
	}
	
	\foreach \s in {x,A,B} {
		\draw[->]  ($(\s1)!0.5!(x2)$)--(x2) (\s1)--($(\s1)!0.5!(x2)$);
		\draw[->] ($(\s2)!0.5!(x3)$)--(x3) (\s2)--($(\s2)!0.5!(x3)$);
		\draw[->] ($(\s3)!0.5!(x1)$)--(x1) (\s3)--($(\s3)!0.5!(x1)$);
	}
	\end{tikzpicture}\qquad\quad
	\begin{tikzpicture}
	\def \n {3}
	
	\draw (0,0) circle (2cm);
	\node at (0,1.5) {$D_{k-1}$};
	
	\draw[rotate=-35] (1.2,0) ellipse (0.5cm and 1cm);
	\node at (1.2,0.45) {$N^+(x)$};
	
	\node[draw, circle, inner sep=1pt] (x) at (-1,-1) {};
	\node at (-1,-0.7) {$x$};
	
	\draw[rotate=-70] (3.5,0) ellipse (0.5cm and 1cm);
	\node at (1.5,-3.2) {$A_x$};
	\coordinate (ax1) at (0.4,-3.1){};
	\coordinate (ax2) at (0.9,-2.8){};
	\coordinate (ax3) at (1.4,-2.65){};
	\coordinate (ax4) at (0.2, -3.4){};
	\coordinate (ax5) at (1.8, -2.65) {};
	
	\coordinate (b1) at (1.7,-0.5) {};
	\coordinate (b2) at (0.85,-1.55) {};

	\node[draw, circle, inner sep=1pt] (x') at (-1,-2.7) {};
	\node at (-1.25,-2.75) {$x'$};
	
	\fill[blue, opacity=0.2] (ax2) to[out=180, in=55 ] (ax4) -- (x')--(ax2);

	\fill[blue, opacity=0.2](ax1) to[in=180, out=45] (ax3) -- (x) -- (ax1);
	
	\fill[blue, opacity=0.2] (b1) to[out=250, in = 35] (b2) -- (ax2) to[out=20, in = 170] (ax5) -- (b1);
	
	\fill[blue, opacity=0.2] (x) to (0.2,-1) to[out=70, in=230] (0.8,0) -- (x);
	
	\draw[->, thick]  ($(x)!0.6!(x')$)--(x') (x)--($(x)!0.6!(x')$);
	
	\draw[->, thick] ($(0.8,-2.7)!0.6!(-0.8,-1.2)$)--(-0.8,-1.2) (0.8,-2.7)--($(0.8,-2.7)!0.6!(-0.8,-1.2)$) ;
	
	\draw[->, thick] ($(-0.75,-2.78)!0.6!(0.35,-3.1)$)--(0.35,-3.1) (-0.75,-2.78)--($(-0.75,-2.78)!0.6!(0.35,-3.1)$) ;
	\draw[->, thick]   ($(-0.75, -0.95)!0.6!(0.3,-0.55)$)--(0.3,-0.55) (-0.75, -0.95)--($(-0.75, -0.95)!0.6!(0.3,-0.55)$) ;
	\draw[->, thick]  ($(1.35, -2.55)!0.6!(1.3, -1.3)$)--(1.3, -1.3) (1.35, -2.55)--($(1.35, -2.55)!0.6!(1.3, -1.3)$) ;
	
	\end{tikzpicture}
	\caption{The digraph $D_k$ for $k=1, 2$ and sketch for general $k$}
\end{figure}

\begin{proposition}\label{prop:digraphs-outdeg}
For every $k \ge 1$, $D_k$ does not contain a strong $\ora{K}_3$ minor.
\end{proposition}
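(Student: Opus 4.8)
The plan is to prove, by induction on $k$, the statement that $D_k$ contains no strong $\ora{K}_3$ minor. The base case $k = 1$ is immediate: $D_1$ is a directed triangle, so any three pairwise disjoint non-empty branch sets must all be singletons, and a directed triangle contains no pair of vertices joined by edges in both directions. So suppose $k \ge 1$ and $D_k$ contains no strong $\ora{K}_3$ minor; I want to deduce the same for $D_{k+1}$.

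First I would pin down the local structure of $D_{k+1}$. Writing $V(D_{k+1}) = V(D_k) \cup \bigcup_{x \in V(D_k)} (\{x'\} \cup A_x)$, the only edges incident to the new vertices are $x \to x'$, the edges $x' \to a$ for $a \in A_x$, and the edges $a \to z$ for $a \in A_x$ and $z \in \{x\} \cup N^+_{D_k}(x)$. Consequently, in $D_{k+1}$ the vertex $x'$ has in-neighbourhood exactly $\{x\}$, and every $a \in A_x$ has in-neighbourhood exactly $\{x'\}$. Two facts fall out immediately. (i) In any strong $\ora{K}_3$ minor, no branch set equals $\{x'\}$ or $\{a\}$ with $a \in A_x$: such a set has only one in-neighbour in $D_{k+1}$, yet it must receive an edge from each of the other two branch sets. (ii) If $B$ is a branch set with $|B| \ge 2$ that meets $\{x'\} \cup A_x$, then strong-connectedness of $D_{k+1}[B]$ forces $x' \in B$, and then $x \in B$, by chasing back along the unique in-neighbours.

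The heart of the argument is the observation that the gadgets create no new reachability among vertices of $V(D_k)$: for any $B \subseteq V(D_{k+1})$ and any $u, v \in B \cap V(D_k)$, a directed walk from $u$ to $v$ inside $D_{k+1}[B]$ can be turned into a directed walk from $u$ to $v$ inside $D_k[B \cap V(D_k)]$. I would prove this by induction on walk length: as soon as the walk leaves $V(D_k)$ it is forced to take the steps $x \to x' \to a \to z$ with $a \in A_x$ and $z \in \{x\} \cup N^+_{D_k}(x) \subseteq V(D_k)$, and this length-three detour can be excised --- replaced by staying at $x$ when $z = x$, or by the $D_k$-edge $x \to z$ when $z \in N^+_{D_k}(x)$ --- producing a strictly shorter walk that still stays in $B$. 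In particular, if $D_{k+1}[B_i]$ is strongly connected and $B_i \cap V(D_k) \neq \emptyset$, then $D_k[B_i \cap V(D_k)]$ is strongly connected.

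Finally, suppose $D_{k+1}$ contains a strong $\ora{K}_3$ minor with branch sets $B_1, B_2, B_3$, and put $Y_i = B_i \cap V(D_k)$. Each $Y_i$ is non-empty: if $B_i$ avoided $V(D_k)$ it would either be a forbidden singleton by (i), or, having size at least two, would be forced to contain some vertex of $V(D_k)$ by (ii). Each $D_k[Y_i]$ is strongly connected by the reachability observation. For the edges, I would take any edge of $D_{k+1}$ from $B_i$ to $B_j$ and check the four possible types: an edge inside $V(D_k)$ is already an edge from $Y_i$ to $Y_j$; edges of the form $x \to x'$, $x' \to a$, and $a \to x$ are each ruled out because, via (i) and (ii), they would force a vertex into two different branch sets; and an edge $a \to y$ with $a \in A_x$ and $y \in N^+_{D_k}(x)$ forces (by (ii)) $x \in B_i$, whence $x \to y$ is an edge from $Y_i$ to $Y_j$ in $D_k$. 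Thus $Y_1, Y_2, Y_3$ form a strong $\ora{K}_3$ minor in $D_k$, contradicting the inductive hypothesis. The main obstacle is the reachability observation together with the edge-type bookkeeping in the last step; I note that, perhaps surprisingly, the ``no even directed cycle'' property of $D_k$ is not needed here --- only the fact that the new gadget vertices have tiny in-neighbourhoods.
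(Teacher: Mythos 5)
Your proof is correct and follows essentially the same route as the paper's: induction on $k$, using the unique in-neighbours of $x'$ and of the vertices in $A_x$ to force $x$ into any branch set meeting the gadget, rerouting paths through $x$ to show each $B_i\cap V(D_k)$ is strongly connected, and projecting the cross-edges down to $D_k$. Your explicit case analysis of the edge types in the final step is in fact slightly more careful than the paper's terse closing sentence, but the argument is the same.
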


\begin{proof}
	


We proceed by induction on $k$. This is clearly true for $k = 1$, so suppose $k \ge 2$ and the result holds for smaller values of $k$. Suppose, by way of contradiction, that $D_{k+1}$ contains a strong $\ora{K}_3$ minor with branch sets $U, W, X$. Since $D_k$ contains no strong $\ora{K}_3$ minor, it must be the case that at least one of these branch sets intersects $A_x \cup \{x'\}$ for some $x \in V(D_k)$. So suppose $U$ intersects $A_x \cup \{x'\}$. We claim that $x \in U$.

Indeed, first suppose that $x' \in U$. Since $N^-(x') = \{x\}$,  any nontrivial directed path to $x'$ passes through $x$. As $D[U]$ is strongly-connected, we either have $x \in U$ or $U = \{x'\}$. The latter cannot hold, since otherwise we must have both $x\in X$ and $x \in W$, a contradiction. Thus, we have $x \in U$.

Now suppose that $y \in U$ for some $y \in A_x$. Then since $N^-(y) = \{x'\}$, any nontrivial directed path  to $y$ includes $x'$, so either $x' \in U$ or $U = \{y\}$. In the latter case we must have both $x' \in X $ and $x' \in W$, a contradiction. In the former case we have $x \in U$, by the above argument.

In any case, $x\in U$ as claimed. But then $U' = U\cap V(D_k) $ induces a strongly-connected subdigraph in $D_k$. Indeed, if $U' = \{x\}$, then it is trivially strongly-connected in $D_k$. Otherwise, let $u, v \in U'$ be distinct vertices. Then any path from $u$ to $v$ that uses vertices not in $D_k$,  say from $A_{x} \cup \{x'\}$,  passes through $x$ and eventually an out-neighbour $z$  of $x$. So there is a path from $u$ to $v$ contained in $U'$, obtained by  going from $x$ directly to $z$ and continuing outside of $A_x \cup \{x'\}$.
Similarly, we have that $W' = W\cap V(D_k) $  and $X' = X\cap V(D_k) $ induce strongly-connected subdigraphs  in $D_k$.

Since $U, W,$ and $X$ are pairwise disjoint, then for any $x$ at most one of these three sets intersects $A_x \cup \{x'\}$. 
Since there are no edges between $A_x \cup \{x'\}$ and $A_y \cup \{y'\}$ for distinct $x$ and $y$, all edges between $U, W$, and $X$ are
the edges between $U', W'$, and $X'$. Thus $U', W',$ and $X'$ form branch sets of 
a strong $\ora{K}_3$ minor in $D_k$, a contradiction.
\end{proof}

\section{strong minors in tournaments}\label{sec:tourn}

\subsection{Large dichromatic number in tournaments}\label{subsec:tourn-chi}

The aim of this section is to prove \Cref{thm:main-tourn-1}. Before doing so, however, we shall prove the following warm-up result, which shows that dichromatic number linear in $r$ is sufficient to guarantee strong $\ora{K}_r$ minors in tournaments.

\begin{proposition}\label{prop:tournaments-chromatic-easy}
Let $r \ge 1$ be an integer and suppose $T$ is a tournament with $\chi(T)\geq 3r$. Then $\strm(T) \ge r$. 
\end{proposition}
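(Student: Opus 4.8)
The plan is to argue by induction on $r$. By \Cref{lem:chi-subgraph} we may pass to a strongly-connected subtournament with dichromatic number still at least $3r$, so we may assume $T$ itself is strongly-connected. The base case $r=1$ is immediate: $\chi(T)\ge 3$ forces a directed cycle, and any strongly-connected subtournament on at least two vertices is a strong $\ora{K}_{1}$ minor.

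For the inductive step I would peel off a single branch set and recurse into what remains. Since $T$ is strongly-connected with $\chi(T)\ge 3r\ge 6$, it contains a directed triangle $B$. Partition $V(T)\setminus B$ into $B^{+}=\{v:B\to v\}$, $B^{-}=\{v:v\to B\}$, and the set $M$ of ``mixed'' vertices, namely those having both an in-neighbour and an out-neighbour in $B$. The point of this split is that every nonempty $X\sub M$ is joined to $B$ by edges in both directions: every vertex of $X$ sends an edge into $B$ and receives one from $B$. Thus, if we can choose $B$ so that $\chi(T[M])\ge 3(r-1)$, then \Cref{lem:chi-subgraph} yields a strongly-connected $T'\sub T[M]$ with $\chi(T')\ge 3(r-1)$, the induction hypothesis gives branch sets $X_1,\dots,X_{r-1}$ of a strong $\ora{K}_{r-1}$ minor inside $T'$, and $\{X_1,\dots,X_{r-1},B\}$ is then a strong $\ora{K}_{r}$ minor of $T$ (the $X_i$ are pairwise compatible by induction, and each is compatible with $B$ since $X_i\sub M$).

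The whole difficulty is therefore concentrated in the claim that one can pick the triangle $B$ so that discarding $B$ together with $B^{+}\cup B^{-}$ costs at most $3$ in dichromatic number, and this is the step I expect to be the main obstacle. A single greedy choice is not enough: in a quasi-random tournament every triangle has linearly many vertices both above it and below it, and those sets can themselves carry large dichromatic number. The way I would push through is to refuse to treat $B^{+}$ and $B^{-}$ as lost: if $\chi(T[B^{+}])$ is large then $B\to B^{+}$, so one can look for branch sets inside $B^{+}$ instead, recording a growing tower $B\to B'\to B''\to\cdots$ of triangles obtained this way (and symmetrically for $B^{-}$). The key structural fact is that a tower $C_1\to C_2\to\cdots\to C_t$, with all edges between distinct blocks directed forward, satisfies $\chi\bigl(T[\bigcup_i C_i]\bigr)=\max_i\chi(T[C_i])=2$: overlaying the two-colourings of the blocks works since a colour class meets each block in a transitive set and the blocks are linearly ordered. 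Hence towers absorb essentially no dichromatic number, which is what forces the recursion into $M$, $B^{+}$, or $B^{-}$ to terminate in our favour; balancing these three options against this observation is where the constant $3$ enters, and it is precisely the slack that the finer analysis behind \Cref{thm:main-tourn-1} later removes at the cost of more work.
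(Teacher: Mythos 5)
Your inductive frame (peel off one strongly-connected, doubly-dominating branch set of bounded dichromatic number, then recurse via \Cref{lem:chi-subgraph}) is exactly the right shape, and your observation that any branch set inside the ``mixed'' vertices is automatically joined to $B$ in both directions is the key compatibility fact. But there is a genuine gap at precisely the step you flag, and the tower device does not close it. Write the current universe as $U$ and the chosen triangle as $C$, with $U\setminus V(C)=M\cup C^{+}\cup C^{-}$. Subadditivity only gives $\chi(U)\le \chi(U[M])+\chi(U[C^{+}])+\chi(U[C^{-}])+2$. If you are not already done then $\chi(U[M])$ may still be as large as $3(r-1)-1$, and when you descend into (say) $C^{+}$ you discard both $M$ and $C^{-}$; all you can conclude is $\chi(U[C^{+}])\ge \chi(U)-\chi(U[M])-\chi(U[C^{-}])-2$, which after a single step can already be as small as $1$ even when $\chi(U)\ge 3r$. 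The fact that the tower $C_1\to C_2\to\cdots\to C_t$ has dichromatic number $2$ is true but beside the point: the dichromatic number being lost lives in the discarded sets $M$ and $C^{\mp}$ at each level, which the tower controls in no way, and your sketch supplies no invariant preventing the budget from collapsing long before some mixed set reaches $3(r-1)$. In particular the constant $3$ cannot come out of this trichotomy; as written, the argument does not terminate ``in our favour''.

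The paper's proof repairs exactly this point by replacing the triangle with a \emph{maximum transitive} set $A$: maximality forces every vertex outside $A$ to have both an out-neighbour and an in-neighbour in $A$ (otherwise it could be appended to $A$ as a new source or sink), so the troublesome sets $B^{+}$ and $B^{-}$ are empty and \emph{all} of $V(T)\setminus A$ is mixed. The only remaining defect is that $A$ is not strongly connected, and this is fixed by adding a shortest directed path $P$ from the sink of $A$ back to its source (which exists by strong connectivity); minimality of $P$ forces all edges between path vertices at distance at least two to point backwards, so $\chi(T[V(P)])\le 2$ and hence $\chi(T[A\cup V(P)])\le 3$. Taking $A\cup V(P)$ as the $r$-th branch set and applying induction to $T-(A\cup V(P))$, whose dichromatic number is still at least $3(r-1)$, completes the proof. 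If you want to salvage your triangle-based approach, you would need some analogous mechanism guaranteeing that the non-mixed vertices carry only $O(1)$ dichromatic number, which is exactly what maximality of the transitive set provides for free.
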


\begin{proof}
We proceed by induction on $r$. Clearly, this is true for $r = 1$, so we may assume $r  \ge 2$ and the result holds for smaller values of $r$. Suppose $T$ is a tournament with $\chi(T) \ge 3r$. We may assume $T$ is strongly-connected; otherwise, by \Cref{lem:chi-subgraph}, we can pass to a strongly-connected component with dichromatic number at least $3r$.
Let $A \subset V(T)$ be a largest transitive subset with source $s$ and sink $t$. Since $T$ is strongly-connected there exists a directed path in $T$ from $t$ to $s$. Pick a shortest directed path $P$ from $t$ to $s$. Note that $\chi(T[V(P)]) \le 2$: by the minimality of $P$, all edges between vertices of $P$ that have distance at least $2$ on the path are oriented backwards. It follows that vertices at odd/even distances from the initial vertex produce a $2$-colouring of $P$. Then $\chi(T[A \cup V(P)]) \le 3$. Let $T' = T - (A \cup V(P))$. It follows that $\chi(T') \ge 3r - 3 = 3(r-1)$. By induction, we have that $T'$ contains a strong $\ora{K}_{r-1}$ minor.

Observe that the maximality of $A$ implies that for every vertex $v \in V(T) \setminus A$, $v$ has at least one out-neighbour and at least one in-neighbour in $A$. In particular, for each branch set of our strong $\ora{K}_{r-1}$ minor in $T'$, there are edges in both directions between $A$ and this branch set. Finally, note that $T[A \cup V(P)]$ is strongly-connected, so $A \cup V(P)$ can be taken as the $r$th branch set in a strong $\ora{K}_r$ minor in $T$, completing the induction and the proof.
\end{proof}

The proof of \Cref{prop:tournaments-chromatic-easy} follows by first considering a maximal transitive set, ensuring that this set is in and out `dominating', and then making this set strongly-connected by attaching a shortest path. Using this naive approach, we can only guarantee the the resulting set has dichromatic number at most $3$. We would like to reduce `$3$' to `$2$', and thus in order to prove \Cref{thm:main-tourn-1}, we must work a little harder. The following lemma, which may be viewed as the heart of the proof of \Cref{thm:main-tourn-1}, shows that one can find strongly-connected dominating sets of dichromatic number $2$ in any strongly-connected tournament. This is clearly tight on the dichromatic number and it is a result of independent interest. 

Before embarking on the proof, let us introduce some useful terminology. Let $D$ be a digraph, let $S \subseteq V(D)$, and let $X \subseteq V(D) \setminus S$. We say that $S$ \emph{out-dominates} (\emph{in-dominates}, resp.) $X$ if every $x \in X$ has at least one in-neighbour (out-neighbour, resp.) in $S$. If $X = V(D) \setminus S$, we say that $S$ is \emph{out(in)-dominating}. If $S$ is both out-dominating and in-dominating, we shall say that $S$ is \emph{dominating} (or, that it is a \emph{dominating set}).

\begin{lemma}\label{lem:tournaments-dominating}
Let $T$ be a strongly-connected tournament. Then there exists a subset $R\subseteq V(T)$ satisfying the following:
\begin{itemize}
    \item $T[R]$ is strongly-connected,
    \item $R$ is dominating, and
    \item $\chi(T[R]) = 2$.
\end{itemize}
\end{lemma}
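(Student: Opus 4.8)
The plan is to build $R$ by a greedy/extremal argument that strengthens the one in \Cref{prop:tournaments-chromatic-easy}, namely by starting from a maximal transitive set and then closing it up into a strongly-connected dominating set while keeping the dichromatic number at $2$. First I would take a maximum transitive subset $A\subseteq V(T)$, with source $s$ and sink $t$; as observed in the proof of \Cref{prop:tournaments-chromatic-easy}, maximality forces every vertex of $V(T)\setminus A$ to have an in-neighbour and an out-neighbour in $A$, so $A$ is already dominating. Since $T$ is strongly-connected there is a directed path from $t$ back to $s$; but rather than just appending a shortest such path (which only gives $\chi\le 3$), I would try to absorb the path into the transitive set more cleverly.

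The key idea: pick a shortest directed path $P = t = p_0 p_1 \dots p_k = s$ from $t$ to $s$, and consider the set $R = A \cup V(P)$. This is strongly-connected and dominating. The issue is $\chi(T[R])$. Minimality of $P$ gives that $T[V(P)]$ has only ``backward'' chords, so $T[V(P)]$ is $2$-colourable; and $T[A]$ is transitive, hence $1$-colourable; the naive union gives $3$. To get down to $2$, I would exploit the interaction between $A$ and $P$: each internal vertex $p_i$ ($0<i<k$) has both an in- and an out-neighbour in $A$, and I would use the transitive order on $A$ together with the path structure to show one can two-colour $R$ directly. Concretely, I expect the right move is to not use a global maximal transitive $A$, but to choose $R$ as a minimal (under inclusion) strongly-connected dominating set, or a strongly-connected dominating set minimizing $|R|$, and then argue that such a minimal $R$ cannot contain a directed cycle structure forcing dichromatic number $\ge 3$ — i.e. that $T[R]$ decomposes into a transitive part plus a short ``return'' structure that is $2$-colourable. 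The main obstacle is exactly ruling out $\chi(T[R])=3$: one must show that minimality of $R$ prevents $T[R]$ from containing three ``independent'' directed cycles' worth of obstruction, presumably by a local exchange argument showing any vertex whose removal breaks neither strong connectivity nor domination can be deleted, and that the surviving configuration is a transitive set with at most one extra back-path attached.

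Thus the proof skeleton I would write is: (1) reduce to a minimal strongly-connected dominating set $R$ (exists since $V(T)$ itself is one when $T$ is strongly-connected, and domination is monotone under taking supersets while we minimize); (2) show $T[R]$ has a Hamiltonian-path-like structure — pick a longest transitive subset $A$ of $R$ with source $s$, sink $t$, and use minimality to show $R\setminus A$ lies on a single shortest $t$–$s$ path; (3) combine the $1$-colouring of $A$ with the $2$-colouring of the return path, checking via the path's minimality and the domination condition that the colour classes glue into a valid $2$-colouring of $T[R]$, hence $\chi(T[R])\le 2$; and finally note $\chi(T[R])\ge 2$ because $R$ is strongly-connected (it contains a directed cycle), giving equality. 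I would also remark that this lemma is then fed into the induction for \Cref{thm:main-tourn-1} exactly as in \Cref{prop:tournaments-chromatic-easy}, but now peeling off dichromatic number $2$ per branch set instead of $3$, which is what yields the bound $\chi(T)\ge 2r$.
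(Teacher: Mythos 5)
Your plan correctly reproduces the warm-up argument (a maximal transitive set $A$ is dominating, and attaching a shortest return path gives a strongly-connected dominating set of dichromatic number at most $3$), but the entire content of \Cref{lem:tournaments-dominating} — getting from $3$ down to $2$ — is left to an unproved hope: that an inclusion-minimal (or minimum-size) strongly-connected dominating set $R$ must decompose as a transitive set plus a single short return path, so that it is $2$-colourable. You never give the ``local exchange argument'' that is supposed to establish this, and the structural claim is in fact false for inclusion-minimal sets. To see this, take any tournament $H$ with $\chi(H)=3$ (for instance the tournament $S_3$ of \Cref{const:tourn}), and build $T$ by adding, for each vertex $r$ of $H$, a private new vertex $u_r$ with $u_r\rightarrow r$ and $r'\rightarrow u_r$ for every $r'\in V(H)\setminus\{r\}$, orienting the edges among the new vertices transitively. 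Then $T$ is a strongly-connected tournament, $R=V(H)$ is a strongly-connected dominating set, and it is inclusion-minimal: any proper subset of $R$ omits some $r$, and then $u_r$ has no out-neighbour in that subset. Yet $\chi(T[R])=3$. So minimality alone cannot force the ``transitive set plus one back-path'' structure, and choosing $R$ of minimum cardinality is a different, equally unproven claim. In short, the key step of the lemma is missing, not merely unpolished.

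For comparison, the paper does not work with a globally maximal transitive set or with minimal dominating sets at all. It fixes a vertex $x$ of maximum out-degree and takes a largest transitive set $S(x)$ with \emph{sink} $x$: maximality makes $S(x)$ out-dominating, and the degree condition yields a single vertex $w^*$ with $x\rightarrow w^*\rightarrow y$ ($y$ the source), so strong connectivity costs only one extra vertex rather than a long path. The set $F$ of vertices not in-dominated by $S(x)$ is then partitioned into layers $F_1,\ldots,F_k$ with all forward edges between non-consecutive layers, a transitive set $S$ is chosen in the last two layers to in-dominate $F$, and a shortest path from $S$ back to $F_1$ plus one connecting vertex closes everything up; a parity-based colouring of the layers then gives $\chi(T[R])=2$. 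Some argument of this calibre (or a genuinely new idea) is needed where your proposal currently says ``presumably''.
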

\begin{proof}
We illustrate the proof in  Figure \ref{fig-dom}.
Let $x$ be a vertex in $T$ with maximum out-degree in $T$. Let $S(x)$ be a largest possible transitive subtournament with sink $x$. Denote by $y$ the source of $S(x)$. Observe that since $S(x)$ is maximum-sized, it must be out-dominating: for all $v \in V(T) \setminus S(x)$ there is $u \in S(x)$ such that $u \rightarrow v$. Moreover, as $x$ has maximum out-degree in $T$, there exists a vertex $w^* \in V(T) \setminus S(x)$ such that $x \rightarrow w^*$ and $w^* \rightarrow y$, otherwise $y$ would have a larger out-degree than $x$.

Now, let $F=\bigcap_{s \in S(x)} N^{+}(s)$. The set $F$ consists of the vertices that are \emph{not} in-dominated by $S(x)$. If $F = \varnothing$, then we are done, since then we may take $R = S(x) \cup \{w^*\}$. So we may assume that $F \neq \varnothing$. Let $F^+ \subseteq F$ be the set of vertices in $F$ that have an out-neighbour in $V(T) \setminus F$. Since $T$ is strongly-connected, it follows that $F^+ \neq \varnothing$. We need to find a set (hopefully of small dichromatic number) that in-dominates the vertices in $F$. First, we prove the following claim showing that we may partition $F$ in a useful way.

\begin{claim}\label{claim:partition}
For some $k \ge 1$ there is a partition $F = F_1 \cup \cdots \cup F_k$ such that $F_1 = F^+$ and  for every $1 \le i \le k-1$ 
\begin{enumerate}
    \item  for every $u \in F_{i+1}$ there is $z \in F_{i}$ such that $u \rightarrow z$, and\label{itm:partition-1}
    
    \item  for every $j$, $j>i+1$, $F_i \rightarrow F_j$.\label{itm:partition-2}
\end{enumerate}
\end{claim}
\begin{proof}
Let $F_1 = F^+$. Suppose $F_1, \ldots, F_i$ have been constructed for some $i \ge 1$. Let $Y = F \setminus (\bigcup_{j \in [i]} F_j)$. If $Y = \varnothing$, then stop with $k:=i$. Otherwise, let
\[
    F_{i+1} = (\bigcup_{u \in F_i}N^-(u))\cap Y.
\]
Note that $F_{i+1}$ is nonempty due to the strong-connectivity of $T$. Indeed, if $F_{i+1} = \varnothing$, then $F_i \rightarrow Y$ and since $Y \cap F_1 = \varnothing$, no vertex of $Y$ has an out-neighbour in $V(T) \setminus F$. Thus, there is no directed path from a vertex in $Y$ to a vertex in $F_1 = F^+$, a contradiction to the strong-connectivity assumption.

Also, note that
\[
    \bigcup_{j \in [i]}F_j \rightarrow Y\setminus F_{i+1}.
\]
By construction, every vertex in $F_{i+1}$ has at least one out-neighbour in $F_i$. This completes the proof of the claim.
\end{proof}



Let $S_k$ be a largest transitive set contained in $T[F_k]$. If $F_{k-1}$ exists (i.e. $k \ge 2$), let $S = S_k \cup S_{k-1}$, where $S_{k-1} \subseteq F_{k-1}$ is as large as possible such that $S$ is transitive, and all vertices of $S_{k-1}$ lie \emph{after} the vertices in $S_k$ in the transitive order. If $k=1$, let $S=S_1$.   Let $s$ denote the sink of $S$. In particular, either $s \in F_k$ or $s \in F_{k-1}$. Intuitively, $S$ in-dominates $F$ (which we shall show later), and it remains to `connect' $S$ to $S(x)$ to find a dominating set that is strongly-connected.

To this end, consider a shortest directed path $P=s\ldots x_1$ contained in $F$ from $s$ to a vertex in $F_1$; such a path exists by \Cref{claim:partition}. Also, observe that since $P$ is a shortest path in $F$ to $F_1$, it has exactly one vertex in each $F_i$. Now, since $x_1 \in F_1 = F^+$, by definition of $F^+$ we may choose an out-neighbour $w \in V(T) \setminus (F \cup S(x))$ of $x_1$. It is possible that $w = w^*$, but this only makes the proof simpler. So we assume that $w \neq w^*$. 
Finally, let 
\[
    R=S(x)\cup S \cup V(P)\cup W.
\]
The following sequence of claims shows that $R$ has all of the desired properties (see Figure \ref{fig-dom}).

\begin{claim}\label{claim:R-strongly-connected}
$T[R]$ is strongly-connected.
\end{claim}
\begin{proof}
First, note that $T[S(x)\cup W]$ is strongly-connected. Indeed, $T[S(x) \cup \{w^*\}]$ is strongly-connected since $w^* \in N^-(y) \cap N^+(x)$. As $w \notin F$, it has an out-neighbour $v \in S(x)$, and the maximality of $S(x)$ implies that $w$ has an in-neighbour $v' \in S(x)$. It follows that $T[R]$ is strongly-connected, as claimed.
\end{proof}

\begin{claim}\label{claim:R-dominating}
$R$ is a dominating set in $T$.
\end{claim}
\begin{proof}
$S(x)$ dominates $V(T) \setminus (S(x) \cup F)$: the maximality of $S(x)$ implies that it is out-dominating, and it additionally in-dominates $V(T) \setminus F$ (since $F$ is exactly the set of vertices \emph{not} in-dominated by $S(x)$). So it suffices to show that $R$ in-dominates $F \setminus (S \cup V(P))$. In particular, we claim that $S$ in-dominates $F \setminus (S \cup V(P))$. Indeed, by (\ref{itm:partition-2}) of \Cref{claim:partition} we have
\[
\bigcup_{i \le k - 2} F_i \rightarrow S \cap F_k,\]
so it suffices to show that $S$ in-dominates $(F_k \cup F_{k-1)}) \setminus S$. Recall that $S = S_k \cup S_{k-1}$ where $S_k$ is a largest transitive set in $F_k$, and $S_{k-1}$ is a largest possible transitive extension of $S_k$ in $F_{k-1}$ such that all vertices of $S_{k-1}$ lie after $S_k$ in the transitive order. Thus, if there is some vertex $z \in (F_k \cup F_{k-1}) \setminus S$ such that $S \rightarrow z$, we obtain a contradiction with our choice of $S$. It follows that $S$ in-dominates $(F_k \cup F_{k-1}) \setminus S$, as claimed.
\end{proof}

\begin{claim}\label{claim:R-chromatic}
$\chi(T[R]) = 2$.
\end{claim}
\begin{proof}
We produce a $2$-colouring of $R$ that depends on the parity of $k$ 
and also  on whether $s$ (the sink of $S$) is in $F_k$ or $F_{k-1}$. We deal with the case when $k$ is even first, and sketch the proof when $k$ is odd, as the proof is nearly identical. Recall that as $P = s\ldots x_1$ is a shortest $s-F_1$ path in $F$, it has exactly one vertex in each $F_i$ (except for maybe $F_k$).

\textbf{Case $1$:} \emph{$k$ is even}.
First, suppose that $k=2$. If $s \in F_2$, then let one colour class be $S(x) \cup \{x_1\}$ and the other $S \cup W$. Note that the latter set is transitive since $S \subseteq F_2$ and so sends no out-edges to $V(T) \setminus F$. On the other hand, if $s \in F_1$, then let one colour class be $S(x) \cup (S \cap F_1)$ and the other $(S \cap F_2) \cup W$. 

Now, suppose $k>2$ and $x_k := s \in F_k$. Write $P = x_kx_{k-1}\ldots x_1$ with $x_i \in F_i$ for all $i \in [k]$. Let $P_{\text{odd}} = \{x_1, x_3, \ldots, x_{k-1}\}$ and $P_{\text{even}} = \{x_2, x_4, \ldots, x_k\}$. Then we can let one colour class be $S(x) \cup P_{\text{odd}}$ and the other $S \cup P_{\text{even}} \cup W$. Indeed, $S(x)$ is transitive and $S(x) \rightarrow P_{\text{odd}}$. Moreover, $T[P_{\text{odd}}]$ is transitive by (\ref{itm:partition-2}) of \Cref{claim:partition}. Similarly, since $x_k = s \in F_k$ and $k > 2$, we have $S \cup P_{\text{even}} \subseteq F \setminus F_1$ and so $W \rightarrow S \cup P_{\text{even}}$. As before, $T[S \cup P_{\text{even}}]$ is transitive and clearly so is $W$.

Lastly, we consider the case when $x_{k-1} := s \in F_{k-1}$, i.e., when $P=x_{k-1},\ldots, x_1$ 
let  $P_{\text{even}} = \{x_2, \ldots, x_{k-2}\}$ and let $P_{\text{odd}}= V(P) \setminus P_{\text{even}}$. In this case, we let one colour class be $S(x) \cup P_{\text{odd}} \cup (S \cap F_{k-1})$ and the other be $(S \cap F_k) \cup P_{\text{even}} \cup W$. It is routine to check that this is indeed a $2$-colouring using the same arguments as above.


\textbf{Case $2$:} \emph{$k$ is odd}. If $k=1$ let $S(x)\cup S$ be one colour class, and $W$ the other.
If $k >1$, then an analogous argument as in the previous case yields a $2$-colouring of $R$.
\end{proof}

Hence, $R$ satisfies all of the claimed properties. This completes the proof of the lemma.
\end{proof}

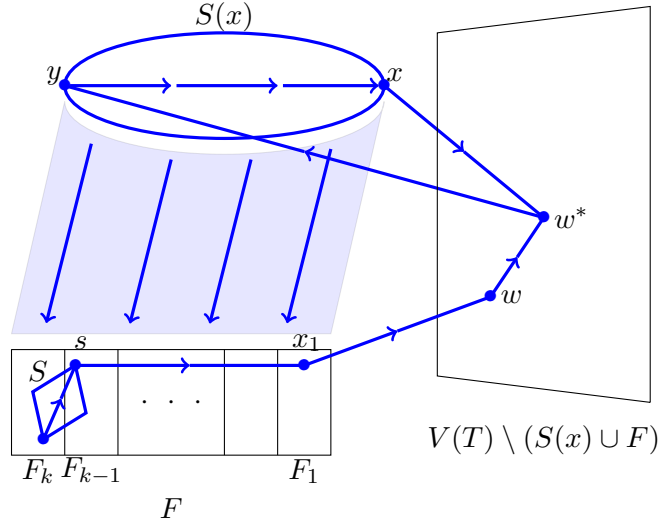
\begin{figure}[H]

\begin{tikzpicture}[scale=0.7]

\draw[very thick, blue] (3,9) ellipse (3cm and 1cm); 
\node at (3,10.3) {$S(x)$}; 
\coordinate (y) at (0,9);
\node[blue] at (y) {\textbullet}; 
\node at (-0.2,9.2) {$y$}; 
\coordinate (x) at (6,9);
\node[blue] at (x) {\textbullet}; 
\node at (6.2,9.2) {$x$}; 
\draw[->, very thick, blue] (0.1,9)--(2,9); 
\draw[->, very thick, blue] (2.1,9)--(4,9); 
\draw[->, very thick, blue] (4.1,9)--(5.9,9); 

\coordinate (w*) at (9,6.5);
\node[blue] at (w*)  {\textbullet}; 
\node[right] at (w*) {$w^*$}; 
\coordinate (w) at (8,5);
\node[blue] at (w)  {\textbullet}; 
\node[right] at (w) {$w$}; 
\draw[->, very thick, blue] ($(x)!0.5!(w*) $)--(w*) (x)--($ (x)!0.5!(w*) $) ; 
\draw[->, very thick, blue] ($(w*)!0.5!(y) $)--(y) (w*)--($(w*)!0.5!(y) $);
\draw[->, very thick, blue]($(w)!0.5!(w*)$)--(w*) (w)--($(w)!0.5!(w*)$); 

\draw (11,3)--(7,3.5)--(7,10)--(11,10.5)--(11,3);
\node at (9,2.3) {$V(T)\setminus (S(x) \cup F)$};

\draw[fill=blue, opacity=0.1] (0,8.7) arc(180:360:3cm and 1cm) -- (5,4.3) -- (-1,4.3) --(0,8.7) ; 

\draw[->, very thick, blue] (0.5,7.9) -- +(256:3.5cm);
\draw[->, very thick,blue] (2,7.6) -- +(256:3.2cm);
\draw[->, very thick,blue] (3.5,7.6) -- +(256:3.2cm);
\draw[->, very thick,blue] (5,7.8) -- +(256:3.4cm);

\draw (-1,4) -- (5, 4) -- (5,2) -- (-1,2)--(-1,4);

\draw (0,2)--(0,4);
\draw (1,2)--(1,4);
\node at (2.5,3)  {$\cdot$}; 
\node at (2,3)  {$\cdot$}; 
\node at (1.5,3)  {$\cdot$};
\draw (3,2)--(3,4);
\draw (4,2)--(4,4);


\coordinate (bla) at (-0.4,2.3);
\node[blue] at (bla) {\textbullet};
\coordinate (xk) at (0.2,3.7);
\node[blue] at (xk)  {\textbullet}; 
\coordinate (x1) at (4.5,3.7);
\node[blue] at (x1)  {\textbullet}; 

\draw[very thick, blue] (bla)--(-0.6,3.2)--(xk)--(0.4,2.8)--(bla);

\draw[->, very thick,blue]($(bla)!0.55!(xk)$)--(xk) (bla)--($(bla)!0.55!(xk)$);
\draw[->, very thick, blue]($(x1)!0.5!(w)$)--(w) (x1)--($(x1)!0.5!(w)$); 
\draw[->, very thick, blue]($(xk)!0.5!(x1)$)--(x1) (xk)--($(xk)!0.5!(x1)$);

\node at (-0.5,3.6) {$S$};
\node at (0.5,1.7) {$F_{k-1}$};
\node at (-0.5,1.7) {$F_k$};
\node at (4.5,1.7) {$F_1$};
\node at (2,1) {$F$};

\node at (0.3,4.13) {$s$};
\node at (4.55,4.13) {$x_1$};

\end{tikzpicture}
\caption{The proof of \Cref{lem:tournaments-dominating}. The direction of $ww^*$ is not important for the proof.}\label{fig-dom}
\end{figure}

We can now easily deduce \Cref{thm:main-tourn-1}.

\begin{proof}[Proof of \Cref{thm:main-tourn-1}]
We have already established the second part of the statement in \Cref{prop:constr-1}. It remains to prove the first part.

We proceed by induction on $r$. Clearly, the result holds for $r = 1$, so we suppose $r \ge 2$ and the theorem holds for smaller values. Let $T$ be a tournament with $\chi(T) \ge 2r$, and, by \Cref{lem:chi-subgraph}, we may assume $T$ is strongly-connected. \Cref{lem:tournaments-dominating} grants a subset $R \subseteq V(T)$ such that $T[R]$ is strongly-connected, $R$ is dominating in $T$, and $\chi(T[R]) = 2$. Letting $T' = T - R$, we see that $\chi(T') \ge 2r - 2 = 2(r-1)$, and so by induction $T'$ contains a strong $\ora{K}_{r-1}$ minor. Taking $R$ as the $r$th branch set yields a strong $\ora{K}_r$ minor in $T$.
\end{proof}

Perhaps the correct constant in \Cref{thm:main-tourn-1} should be `$1$' instead of `$2$' as indicated by the construction $S_r$. In other words, it is possible that any tournament with dichromatic number at least $r + 1$ contains a strong $\ora{K}_r$ minor. We can prove this in the first nontrivial case:

\begin{proposition}\label{prop:small-case}
If $T$ is a tournament with $\chi(T) \ge 3$, then $\strm(T) \ge 2$.
\end{proposition}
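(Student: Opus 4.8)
The plan is to show that a tournament $T$ with $\chi(T) \ge 3$ contains a strong $\ora{K}_2$ minor, i.e.\ two disjoint vertex sets $X_1, X_2$, each inducing a strongly-connected subtournament, with an edge in each direction between them. First I would invoke \Cref{lem:chi-subgraph} to assume $T$ itself is strongly-connected (a strongly-connected component still has dichromatic number at least $3$). The key observation is that a strong $\ora{K}_2$ minor is exactly a pair of disjoint strongly-connected induced subtournaments with edges in both directions between them; since $T$ is a tournament, "edges in both directions" just means neither set dominates the other. So it suffices to find one strongly-connected proper induced subtournament $T[X_1]$ that is \emph{not} dominating (equivalently, such that $T - X_1$ contains a strongly-connected subtournament receiving and sending an edge to $X_1$), and then extract a strongly-connected piece from the other side.

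The cleanest route is to use the machinery already built in \Cref{sec:tourn}. By \Cref{lem:tournaments-dominating}, there is a set $R \subseteq V(T)$ with $T[R]$ strongly-connected, $R$ dominating, and $\chi(T[R]) = 2$. Set $T' = T - R$; then $\chi(T') \ge \chi(T) - \chi(T[R]) \ge 3 - 2 = 1$, so $T'$ is nonempty. Let $X_2$ be the vertex set of a strongly-connected component of $T'$ — in fact I would take the \emph{sink} strongly-connected component of $T'$, so that within $T'$ nothing is "after" $X_2$. Now $X_1 := R$ and $X_2$ are disjoint, each induces a strongly-connected subtournament, and because $R$ is dominating it has both an in-neighbour and an out-neighbour in $X_2$ (indeed $R$ in-dominates and out-dominates every vertex of $V(T) \setminus R$). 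Hence there is an edge from $R$ to $X_2$ and an edge from $X_2$ to $R$, and $\{X_1, X_2\}$ are the branch sets of a strong $\ora{K}_2$ minor, giving $\strm(T) \ge 2$.

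The main thing to be careful about — the only real obstacle — is verifying that $X_2$ genuinely both sends and receives an edge to/from $R$. This is where the \emph{dominating} property of $R$ (not merely out-dominating or strong-connectivity) is essential: since every vertex of $X_2 \subseteq V(T) \setminus R$ has an in-neighbour in $R$ and an out-neighbour in $R$, both required edges exist automatically, with no need to choose $X_2$ cleverly. Alternatively, one could give a self-contained argument mirroring the inductive step in the proof of \Cref{thm:main-tourn-1} specialized to $r = 2$, but reusing \Cref{lem:tournaments-dominating} as above is shortest. A final remark: this is just the base case $r = 2$ of the conjecture that $\chi(T) \ge r+1$ forces $\strm(T) \ge r$, and the argument here is exactly \Cref{thm:main-tourn-1}'s induction run once, except that we only spend $\chi(T[R]) = 2$ colours and need $\chi(T') \ge 1$ rather than $\chi(T') \ge 2(r-1)$, which is why the improved constant $3$ (rather than $4$) suffices in this case.
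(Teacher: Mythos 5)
Your proof is correct, but it takes a genuinely different route from the paper's. You reuse \Cref{lem:tournaments-dominating} directly: extract a strongly-connected dominating set $R$ with $\chi(T[R]) \le 2$, note that $T' = T - R$ is then nonempty since $\chi(T') \ge \chi(T) - \chi(T[R]) \ge 1$, and pair $R$ with a strongly-connected piece of $T'$, using domination to supply edges in both directions. This is essentially the observation that the induction in \Cref{thm:main-tourn-1} actually proves the sharper bound $\chi(T) \ge 2r-1 \Rightarrow \strm(T) \ge r$ once the base case $r=1$ is taken at $\chi(T) \ge 1$; your proposal is that induction unwound once. One small simplification: you do not need a strongly-connected component of $T'$, let alone the sink one — any single vertex $v \in V(T')$ is trivially strongly-connected, and domination of $R$ already gives an edge in each direction between $R$ and $\{v\}$. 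In contrast, the paper's proof is elementary and self-contained: pick any directed triangle $C_1$; if some outside vertex sends edges to $C_1$ in both directions we are done; otherwise $T - C_1$ splits as $A_1 \to C_1 \to B_1$, and any back-edge from $B_1$ to $A_1$ together with $C_1$ yields the minor; failing that, $A_1 \to B_1$ makes $T$ transitively layered so one of $A_1, B_1$ still has dichromatic number $\ge 3$, and we recurse on a strictly smaller tournament. The paper's version avoids invoking the (fairly involved) dominating-set lemma and so stands on its own; yours is shorter given the existing machinery and exposes the slight slack in the stated constant of \Cref{thm:main-tourn-1}. Both are valid.
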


\begin{proof}
Pick a directed triangle $C_1$ in $T$. Since $\chi(T) \ge 3$, $T - C_1$ is nonempty. If there is a vertex $v \notin C_1$ that sends edges in both directions to $C_1$, then we are done. Otherwise, $T-C_1$ can be partitioned into two sets $A_1$ and $B_1$ such that $A_1 \rightarrow C_1$ and $C_1 \rightarrow B_1$. If there is an edge $e$ from $B_1$ to $A_1$, then we are done. Indeed, one branch set is the directed triangle formed from $e$ and one vertex of $C_1$, and the other branch set is just one of the remaining vertices of $C_1$. So we may assume that $A_1 \rightarrow B_1$. In this case, we either have $\chi(A_1) \ge 3$ or $\chi(B_1) \ge 3$. Without loss of generality, $\chi(A_1) \ge 3$, and so we pass to this subtournament and repeat the argument.

In this process we obtain a sequence of nonempty sets strictly decreasing in size, so it must eventually terminate with a strong $\ora{K}_2$ minor.
\end{proof}

\subsection{Large out-degree in tournaments}\label{subsec:tourn-degree}

In this section we turn our attention to investigating the presence of strong minors in tournaments with a minimum out-degree condition. In particular, we prove \Cref{thm:main-tourn-2}. The strategy of our proof is as follows. First, we show that there is a constant $C'$ such that any tournament that is $C'r\sqrt{\log r}$-strongly-connected contains a strong $\ora{K}_r$ minor (see \Cref{lem:tournaments-connected}). Given a tournament $T$ with $\delta^+(T) \ge Cr\sqrt{\log r}$ where $C \gg C'$, we may assume that no subtournament of $T$ is $C'r\sqrt{\log r}$-strongly-connected. This assumption together with the minimum out-degree condition allows us to iteratively construct a strong $\ora{K}_r$ minor in which every branch set is, in fact, a directed triangle. This last step is made precise in \Cref{lem:triv} and \Cref{claim:main}. 

Before proceeding, we need a few preliminary results. Given a positive integer $k$ and a digraph $D$, we say that $D$ is $k$-\emph{linked} if $|V(D)| \ge 2k$ and
for any two disjoint sets of vertices $\{x_1, \ldots , x_k\}$ and $\{y_1, \ldots, y_k\}$ of $k$ vertices each,  there are pairwise vertex disjoint directed 
paths $P_1, \ldots , P_k$ such that $P_i$ has initial vertex $x_i$ and terminal vertex $y_i$ for every $i \in [k]$. We shall use the following theorem of Pokrovskiy~\cite{P}, showing that large enough strong-connectivity guarantees linkedness in tournaments.

\begin{theorem}[Pokrovskiy~\cite{P}]\label{thm:linked}
For every integer $k\ge 1$, any $452k$-strongly-connected tournament is $k$-linked.
\end{theorem}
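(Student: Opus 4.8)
The plan is to prove the formally equivalent linkage statement: given a $452k$-strongly-connected tournament $T$ together with two disjoint ordered sets of vertices $\{x_1,\dots,x_k\}$ and $\{y_1,\dots,y_k\}$, produce pairwise vertex-disjoint directed paths $P_1,\dots,P_k$ with $P_i$ running from $x_i$ to $y_i$. Before starting I would record the elementary consequences of high strong-connectivity that get used repeatedly: $T$ has at least $452k+1$ vertices; every vertex has in-degree and out-degree at least $452k$ (otherwise its in- or out-neighbourhood is a cut-set); deleting any set of at most $t$ vertices leaves a tournament that is $(452k-t)$-strongly-connected; by Menger's theorem, for every vertex $v$ and every set $W$ with $|W|\ge s$ there are $s$ internally-disjoint $v$--$W$ paths and $s$ internally-disjoint $W$--$v$ paths whenever $s\le 452k$; and $\dist_T(u,v)\le |V(T)|/(452k)+1$ for all $u,v$, since if $R_i$ denotes the set of vertices reachable from $u$ in at most $i$ steps then $|R_{i+1}|\ge |R_i|+\min(452k,|V(T)\setminus R_i|)$ whenever $R_i\ne V(T)$, so $R_i$ exhausts $V(T)$ quickly.

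The main line of attack is to construct the paths one at a time. Having built $P_1,\dots,P_{i-1}$, let $U$ be their union together with the still-unused endpoints $\{x_j,y_j:j\ge i\}$, and route $x_i$ to $y_i$ inside the residual tournament $T'=T-(U\setminus\{x_i,y_i\})$. The real difficulty — and the heart of the theorem — is controlling the \emph{lengths} of the paths: if earlier paths are long then $|U|$ grows, the strong-connectivity of $T'$ drops, and the process collapses. When $|V(T)|$ is not too large compared to $k$ this is not a problem: provided the earlier paths had length at most a constant $L$, $T'$ is still roughly $k(449-L)$-strongly-connected, so an $x_i$--$y_i$ path exists and, by the diameter estimate above, has length at most about $|V(T)|/(k(449-L))$, which is again at most $L$ as long as $|V(T)|$ does not exceed something of order $kL(449-L)$; a greedy construction with an appropriately chosen constant budget $L$ then goes through. (The fact that the matched-up pairs come out correctly, rather than merely some $k$ disjoint $\{x_i\}$--$\{y_i\}$ paths, can be enforced along the way by exchanging subpaths between two paths that ``cross'', which is cheap because any two directed paths in a tournament are richly joined to each other.)

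The hard regime is $|V(T)|$ large, where one must genuinely pass to a bounded substructure. Here the plan is to isolate a \emph{linking structure}: a vertex set $M$ of size bounded in terms of $k$ with $T[M]$ still $\Omega(k)$-strongly-connected (hence of bounded diameter), such that every vertex outside $M$ is joined to $M$, and $M$ to it, by $\Omega(k)$ internally-disjoint \emph{short} paths. Given such an $M$, each pair is linked by routing $x_i$ along a short path into $M$, routing out of $M$ to $y_i$ along a short path, and completing the connection inside the low-diameter, highly-connected set $M$; disjointness across the $k$ pairs is maintained by always leaving a linear-in-$k$ amount of slack in the connectivity at each of these three stages, and the linking inside $M$ is handled by a bounded-size instance of the same argument (or directly by Menger together with the bounded diameter of $T[M]$). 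The step I expect to be the main obstacle is precisely the construction of $M$ — in particular arranging short disjoint access paths from every outside vertex into a bounded set — together with the bookkeeping that reconciles the small-$n$ and large-$n$ regimes and tracks the connectivity losses through the ``route in / link / route out'' scheme while keeping the leading constant as small as $452$; this is exactly the technical content supplied by Pokrovskiy~\cite{P}.
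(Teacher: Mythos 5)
This statement is Pokrovskiy's theorem, imported verbatim from \cite{P}; the paper under review gives no proof of it and uses it as a black box. So there is no ``paper's own proof'' to compare against, and the relevant question is simply whether your sketch constitutes a proof. It does not. The preliminaries you record are correct (minimum in- and out-degree at least $452k$, connectivity dropping by at most $t$ upon deleting $t$ vertices, Menger, and the diameter bound $\dist_T(u,v)\le |V(T)|/(452k)+1$ via the growth of reachability sets), and the small-$|V(T)|$ regime handled by greedy one-at-a-time routing with a constant length budget $L$ is a reasonable and roughly correct reduction. But the entire content of the theorem is the regime where $|V(T)|$ is unbounded in terms of $k$, and there your argument is a statement of intent rather than a proof: you postulate a ``linking structure'' $M$ of size bounded in $k$ with $T[M]$ highly connected and with $\Omega(k)$ short internally-disjoint access paths between $M$ and every outside vertex, and then you say that constructing such an $M$ ``is exactly the technical content supplied by Pokrovskiy.'' That is precisely the part that has to be proved. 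It is not at all clear how to bound $|M|$ while simultaneously controlling the lengths and disjointness of the access paths, nor how the three-stage ``route in / link inside $M$ / route out'' bookkeeping closes without circularity (the linking inside $M$ is itself a linkage problem of comparable difficulty).

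In short: you have correctly located the hard step and produced a believable outline around it, but you have deferred rather than supplied the key lemma, so this is a proof sketch with an acknowledged gap, not a proof. Since the paper itself also does not prove this theorem, your best move in the context of this paper is to cite \cite{P} as the authors do and not attempt to re-derive it.
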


Recall that a weak minor has the same definition as a strong minor except that we only require that the branch sets induce connected (not necessarily strongly-connected) subgraphs. As mentioned in the Introduction, Jagger~\cite{J} investigated average degree conditions for finding weak minors in digraphs. 
In the context of tournaments, we need the following:
\begin{theorem}[Jagger~\cite{J}]\label{thm:weak-minor}
There exists an absolute constant $C > 0$ such that the following holds. If $r$ is a positive integer and $T$ is a tournament with at least $Cr\sqrt{\log r}$ vertices, then $T$ contains a weak $\ora{K}_r$ minor.
\end{theorem}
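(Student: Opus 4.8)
The quickest route is to obtain \Cref{thm:weak-minor} as a special case of the density statement for general digraphs mentioned in the introduction (Jagger~\cite{J}): every digraph of average degree $\Omega(r\sqrt{\log r})$ contains a weak $\ora{K}_r$ minor. The underlying graph of a tournament $T$ on $n$ vertices is $K_n$, which has average degree $n-1$, so if $n \ge Cr\sqrt{\log r}$ with $C$ at least the implied constant above, then $T$ already contains a weak $\ora{K}_r$ minor. (Recall that a weak $\ora{K}_r$ minor merely requires each branch set to induce a weakly-connected subdigraph and an edge in each direction between any two branch sets, which is precisely what the density statement produces.) From this angle the theorem is a one-line corollary, and all of the content sits in the general digraph result, whose proof transfers the Kostochka--Thomason argument to the directed setting.

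Should one prefer a self-contained lemma for the proof of \Cref{thm:main-tourn-2} --- at the cost of the optimal exponent --- I would build the branch sets greedily. The tool is the standard fact that every tournament on $m$ vertices has an out-dominating set and an in-dominating set, each of size $O(\log m)$: repeatedly remove a vertex of maximum out-degree (respectively in-degree), which dominates at least half of the remaining vertices. Starting from $S_0 = V(T)$, take $B_1 \subseteq S_0$ to be the union of an out-dominating and an in-dominating set of $T[S_0]$; then $T[B_1]$ is weakly connected and every vertex of $S_0 \setminus B_1$ has both an in-neighbour and an out-neighbour in $B_1$. Set $S_1 = S_0 \setminus B_1$ and iterate, choosing $B_{j+1}$ inside $S_j = S_{j-1} \setminus B_j$. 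Each $B_j$ then has an edge in each direction to every nonempty subset of $S_j$, in particular to $B_{j+1}, \dots, B_r$, so $B_1, \dots, B_r$ form a weak $\ora{K}_r$ minor provided the process survives $r$ rounds. Since $|B_j| = O(\log n)$, this holds once $n \gtrsim r\log n$, which is implied by $n \ge C'r\log r$ for a suitable constant $C'$.

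The main obstacle is the gap between this easy bound $O(r\log r)$ and the claimed optimal bound $O(r\sqrt{\log r})$. The greedy construction is forced to use branch sets of size $\Theta(\log n)$, whereas the extremal examples (random tournaments, as in~\cite{BCH}) only preclude families of branch sets whose pairwise size products substantially exceed $\log r$; hence one genuinely needs branch sets of size $O(\sqrt{\log r})$. Producing them requires the Kostochka--Thomason-type dichotomy --- either a small, suitably dense sub-tournament that one contracts, or enough expansion to route the required connections --- and rather than redo this I would invoke Jagger's theorem as in the first paragraph.
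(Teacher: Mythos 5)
Your first paragraph is exactly how the paper treats this statement: it is quoted as a corollary of Jagger's average-degree theorem for general digraphs and cited, not proved, and your observation that a tournament on $n$ vertices has every vertex of total degree $n-1$ (so average degree $n-1$) makes the reduction immediate. Your alternative greedy argument with $O(\log n)$-sized in/out-dominating sets is also correct (weak-connectivity of each $B_j$ is automatic in a tournament), but, as you note, it only yields the weaker bound $O(r\log r)$, so the paper rightly relies on Jagger's theorem for the sharp $O(r\sqrt{\log r})$.
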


During the course of the proof of \Cref{thm:main-tourn-2}, we shall construct an auxiliary (undirected) graph and apply the following classical result:

\begin{theorem}[Kostochka~\cite{K}, Thomason~\cite{Thomason}]\label{thm:kostochka-thomason}
There exists a constant $c > 0$ such that for every positive integer $r$, every graph of average degree at least $cr\sqrt{\log r}$ contains a $K_r$ minor.
\end{theorem}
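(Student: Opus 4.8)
Since \Cref{thm:kostochka-thomason} is the classical theorem of Kostochka~\cite{K} and Thomason~\cite{Thomason}, the plan would be to reproduce the skeleton of their arguments rather than attempt anything new. First I would prove the contrapositive in its natural, stronger form about \emph{minimum} degree: every $K_r$-minor-free graph has a vertex of degree $O(r\sqrt{\log r})$, from which the statement about average degree follows by repeatedly deleting low-degree vertices. So I would start from a graph $G$ with $\delta(G) \ge d$ and no $K_r$ minor, with the aim of forcing $d = O(r\sqrt{\log r})$. The first step is to reduce to an extremal configuration — pass to a vertex-minimal $K_r$-minor-free graph of minimum degree at least $d$, or, using Mader's theorem, to a $\lceil d/4 \rceil$-connected subgraph — so that $G$ has no sparse separator and, in particular, its edges are not concentrated on a small vertex set.

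The core of the proof is a probabilistic construction of the minor. I would aim to produce $r$ pairwise-disjoint \emph{connected} branch sets $B_1, \dots, B_r$, each of size $\Theta(\sqrt{\log r})$ — for instance by taking a random equipartition, into $r$ classes, of a suitable induced subgraph of $G$ on $\Theta(r\sqrt{\log r})$ vertices, the high connectivity of $G$ being what makes each class connected with good probability. For a fixed pair $i \neq j$ the expected number of $B_i$--$B_j$ edges is $\Omega\!\left(\tfrac{d}{r}\sqrt{\log r}\right) = \Omega(\log r)$ once $d \ge cr\sqrt{\log r}$ for a suitable constant $c$; with a concentration argument the probability that $B_i$ and $B_j$ are non-adjacent is then at most $r^{-3}$, and a union bound over the $\binom{r}{2}$ pairs leaves, with positive probability, a pairwise-adjacent family — that is, a $K_r$ minor, contradicting our assumption. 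It is exactly this calibration — branch sets of size $\asymp \sqrt{\log r}$ against a union bound over $\binom{r}{2}$ events — that yields the exponent $1/2$ on $\log r$: singleton branch sets recover only the exponential greedy bound, while branch sets of bounded size give the weaker $O(r\log r)$ estimate.

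The hard part will be reconciling the ``pseudorandom'' regime — where the probabilistic step runs as described because $G$ is dense on its $\approx d$ vertices — with ``structured'' graphs: when $|V(G)|$ greatly exceeds $d$ the graph is sparse, small random subsets fail to be connected, and one must first contract $G$ to a bounded-size minor that still has large minimum degree (or recurse along a sparse separator) before the probabilistic argument applies. Arranging this reduction so that the minimum-degree hypothesis is preserved, and with a decent leading constant, is the technical crux; Kostochka handles it through a careful analysis of neighbourhoods in a minimal counterexample and Thomason through an averaging argument over random vertex orderings in an edge-maximal graph. For the purposes of this paper I would simply cite \cite{K} and \cite{Thomason}.
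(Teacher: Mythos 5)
The paper does not prove this theorem; it is stated and used as a black box, cited to Kostochka~\cite{K} and Thomason~\cite{Thomason}, and you correctly conclude by doing the same. Your sketch of the underlying argument (reduce to a highly connected minimal configuration, build branch sets of size $\Theta(\sqrt{\log r})$, union bound over $\binom{r}{2}$ pairs) is a fair high-level account of where the $\sqrt{\log r}$ factor comes from, so there is nothing further to reconcile against the paper.
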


Finally, we need one more preliminary result. We say that a subset $B \subset V(T)$ is $K$-\emph{nearly-regular} if either $d^-_T(v) \le d^+_T(v) \le Kd^-_T(v)$ for every $v \in B$, or $d^+_T(v) \le d^-_T(v) \le Kd^+_T(v)$ for every $v \in B$. The following lemma appears in~\cite{GPS}. We reproduce the proof here for convenience.

\begin{lemma}\label{lem:nearly-regular}
Any tournament $T$ contains a $4$-nearly-regular subset of vertices $R$ of size at least $\lfloor{|T|/20}\rfloor$. In particular, any $v \in R$ has in and out-degree at least $|T|/6$.
 \end{lemma}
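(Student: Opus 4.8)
The plan is to exhibit a positive fraction of the vertices of $T$ whose out-degree and in-degree lie within a factor of $4$ of one another, and then to pass to the larger of the two ``orientation classes'' among them. Write $n = |T|$, and recall $d^+_T(v) + d^-_T(v) = n-1$ for every vertex $v$; thus $d^+_T(v) \le 4 d^-_T(v)$ is equivalent to $d^+_T(v) \le \tfrac{4}{5}(n-1)$, and $d^-_T(v) \le 4 d^+_T(v)$ is equivalent to $d^+_T(v) \ge \tfrac{1}{5}(n-1)$. Set $B^+ = \{v : d^+_T(v) > \tfrac{4}{5}(n-1)\}$ and $B^- = \{v : d^+_T(v) < \tfrac{1}{5}(n-1)\}$; every vertex outside $B^+ \cup B^-$ satisfies $\tfrac{1}{5}(n-1) \le d^+_T(v) \le \tfrac{4}{5}(n-1)$, hence lies in $G_+ := \{v : \tfrac{1}{2}(n-1) \le d^+_T(v) \le \tfrac{4}{5}(n-1)\}$ or in $G_- := \{v : \tfrac{1}{5}(n-1) \le d^+_T(v) \le \tfrac{1}{2}(n-1)\}$. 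One checks directly from the definitions that $G_+$ is $4$-nearly-regular with $d^-_T \le d^+_T \le 4 d^-_T$, and $G_-$ is $4$-nearly-regular with $d^+_T \le d^-_T \le 4 d^+_T$. So it suffices to bound $|B^+|$ and $|B^-|$ from above.

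The key estimate is $|B^+| < \tfrac{2}{5}n + \tfrac{3}{5}$, and symmetrically $|B^-| < \tfrac{2}{5}n + \tfrac{3}{5}$ by reversing all edges of $T$. To prove it (for $B^+$ non-empty), I would sum out-degrees over $B^+$ in two ways: on one hand $\sum_{v \in B^+} d^+_T(v) > \tfrac{4}{5}(n-1)\,|B^+|$, while on the other this sum counts exactly the directed edges of $T$ with tail in $B^+$, which is at most $\binom{|B^+|}{2} + |B^+|\,(n - |B^+|)$ (the edges inside the sub-tournament on $B^+$, plus the edges leaving $B^+$). Dividing by $|B^+|$ and rearranging gives $|B^+|/2 < n/5 + 3/10$, as claimed. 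The naive bound coming only from $\sum_v d^+_T(v) = \binom{n}{2}$ is too weak, so the one genuine idea is to use that the vertices of $B^+$ collectively contribute only $\binom{|B^+|}{2}$ to the out-degree sum through edges among themselves; I expect this edge-counting step to be the main point, everything else being bookkeeping.

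Combining the two estimates, $|G_+| + |G_-| \ge |G_+ \cup G_-| = n - |B^+| - |B^-| > \tfrac{n}{5} - \tfrac{6}{5}$, so the larger of the two sets, call it $R$, has $|R| \ge \tfrac{n}{10} - \tfrac{3}{5}$, which is at least $\lfloor n/20 \rfloor$ once $n \ge 12$; for $n \le 11$ we have $\lfloor n/20 \rfloor = 0$ and may take $R = \varnothing$. For the final clause, every $v \in R$ satisfies $\tfrac{1}{5}(n-1) \le d^+_T(v) \le \tfrac{4}{5}(n-1)$ and likewise $\tfrac{1}{5}(n-1) \le d^-_T(v) \le \tfrac{4}{5}(n-1)$, and $\tfrac{1}{5}(n-1) \ge n/6$ whenever $n \ge 6$; since $R$ is non-empty only when $n \ge 12$, both degrees are at least $n/6$. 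I would also check the minor boundary cases (parity of $n$, and the vertex with $d^+_T = d^-_T$ when $n$ is odd, which simply lies in $G_+ \cap G_-$), but these do not affect the argument.
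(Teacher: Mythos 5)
Your proof is correct and takes essentially the same approach as the paper: partition the vertices by how balanced their out- and in-degrees are, bound the number of highly unbalanced vertices by exploiting that they span a subtournament, and then pass to the larger of the two orientation classes among the balanced vertices. The only difference is cosmetic --- you double-count the arcs with tail in $B^+$ directly (an averaging argument), whereas the paper assumes the balanced set is small and derives a contradiction via a vertex of maximum in-degree inside the set of high out-degree vertices; both reduce to the same inequality, and your direct formulation avoids the paper's initial case split on $|R|$.
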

 
\begin{proof}
Let $|T| = n$; we may assume $n \ge 20$, otherwise the result is trivial. Let $R \subset V(T)$ be the set of vertices for which either the ratio between the out-degree and in-degree or vice-versa is between $1$ and $4$. 
If $|R|\geq n/10$, then we are done, as we may pass to a subset $A \subset R$ of at least half the size of $R$ that is $4$-nearly-regular.
If $|R| < n/10$, then let $T'=T - R$, so that $|T'| >  9n/10$. Let $T'_1$ be the set of vertices $v \in V(T')$ for which $d_T^+(v) > 4d_T^-(v)$ and $T'_2$ be those vertices $v \in V(T')$ for which $d_T^-(v) > 4d_T^+(v)$. Suppose without loss of generality that $|T'_1|\geq |T'_2|$, so that $|T'_1|\geq \lceil{9n/20}\rceil$. Since each tournament on $t$ vertices has a vertex of in-degree $\ge \lceil (t-1)/2\rceil$, this implies that there is a vertex $u$ in $T'_1$ which has in-degree inside $T'_1$ at least $(\lceil{9n/20}\rceil - 1)/2$. However,
\[
    d_T^-(u) < \frac{1}{4}d_T^+(u) < \frac{1}{4}(n - d_T^-(u)),
\]
implying that $d_T^-(u) < n/5$. Thus, we obtain
\[
    (\lceil{9n/20}\rceil - 1)/2 \le d_T^-(u) < n/5,
\]
which yields a contradiction for $n \ge 20$.

Finally, let $R \subseteq V(T)$ be $4$-nearly-regular, and assume without loss of generality that $d^-_T(u) \le d^+_T(u) \le 4d^-_T(u)$ for all $u \in R$. If for some $v \in R$ we have $d^-_T(v) < n/6$, then  $d^+_T(v) < 2n/3$, and so $n - 1 = d^+_T(u) + d^-_T(u) < 5n/6$, a contradiction since $n \ge 20$.
\end{proof}

With these preliminaries in place, we shall prove our first lemma, guaranteeing strong $\ora{K}_r$ minors in $C'r\sqrt{\log r}$-strongly-connected tournaments. The strategy is to first embed a weak $\ora{K}_r$ minor in a `nice' portion of the tournament (using \Cref{thm:weak-minor}), and then make each branch set strongly-connected by linking appropriate pairs of vertices.

\begin{lemma}\label{lem:tournaments-connected}
Let $r$ be a positive integer. There exists a constant $C'>0$ such that if $T$ is a $C'r\sqrt{\log r}$-strongly-connected tournament, then $\strm(T) \ge r$.
\end{lemma}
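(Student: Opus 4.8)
The plan is to combine Jagger's weak-minor theorem (\Cref{thm:weak-minor}) with Pokrovskiy's linkedness theorem (\Cref{thm:linked}): first extract a weak $\ora{K}_r$ minor inside a \emph{small} portion of $T$, and then promote each of its branch sets to a strongly-connected set by threading a return path through the large, still highly connected, untouched part of $T$. The key structural observation making this work is that a weak $\ora{K}_r$ minor already has edges in both directions between every pair of branch sets; the only feature missing from a strong $\ora{K}_r$ minor is strong connectivity of the branch sets, and that is exactly what the return paths repair.

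In more detail, I would fix $C'$ large compared to Jagger's constant $C$ and to the absolute constant $452$ in \Cref{thm:linked}, and assume $r \ge 2$ (the case $r=1$ is trivial). Since $T$ is $C'r\sqrt{\log r}$-strongly-connected it has more than $Cr\sqrt{\log r}$ vertices, so I pick a subtournament $T_0 \subseteq T$ on roughly $Cr\sqrt{\log r}$ vertices and apply \Cref{thm:weak-minor} to obtain a weak $\ora{K}_r$ minor of $T_0$ with branch sets $B_1,\dots,B_r$. For each $i$ let $s_i \to \cdots \to t_i$ be a Hamiltonian directed path of the tournament $T[B_i]$, put $I = \{i : |B_i| \ge 2\}$, and note that the at most $2|I|$ vertices $\{s_i,t_i : i \in I\}$ are pairwise distinct. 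Now set $T'' = T - \bigl(V(T_0) \setminus \{s_i,t_i : i \in I\}\bigr)$. Since at most $|V(T_0)| \le Cr\sqrt{\log r}+1$ vertices were deleted, $T''$ remains at least $452r$-strongly-connected provided $C'$ is large enough (it suffices that $C'$ dominates $C + 452/\sqrt{\log 2}$), hence $|I|$-linked by \Cref{thm:linked}. Linking the pairs $(t_i,s_i)_{i \in I}$ produces pairwise vertex-disjoint directed paths $Q_i$, where $Q_i$ runs from $t_i$ to $s_i$.

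It remains to assemble the branch sets. Because $V(T'')$ meets each $B_j$ only in $\{s_j,t_j\}$, and because the $Q_i$ are pairwise vertex-disjoint, the internal vertices of each $Q_i$ avoid every $B_j$ and avoid one another; hence the sets $R_i := B_i \cup V(Q_i)$ for $i \in I$, together with $R_i := B_i$ for $i \notin I$, are pairwise disjoint and nonempty. Each $T[R_i]$ is strongly connected: for $i \in I$ the Hamiltonian path of $T[B_i]$ followed by $Q_i$ is a spanning closed walk of $T[R_i]$, and for $i \notin I$ we have $|R_i|=1$. Finally, since $B_i \subseteq R_i$ and the weak $\ora{K}_r$ minor had edges in both directions between $B_i$ and $B_j$, there are edges in both directions between $R_i$ and $R_j$ for all $i \ne j$. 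Thus $R_1,\dots,R_r$ are the branch sets of a strong $\ora{K}_r$ minor in $T$, so $\strm(T) \ge r$.

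The main (really the only) obstacle is ensuring the linking paths neither pass through nor merge distinct branch sets. This is dealt with by two moves: localizing the entire weak minor inside a bounded-size subtournament $T_0$, so that a large highly connected reservoir survives for routing; and deleting from $T$ \emph{all} of $V(T_0)$ except the sources and sinks $s_i,t_i$ before invoking linkedness, so that the only branch-set vertices visible to the linkage are precisely the endpoints to which we want the return paths attached. Beyond that, one only has to carry the bookkeeping on the constant $C'$ (large enough to absorb the $|V(T_0)|$ vertices removed) and to deal with the degenerate cases $|B_i| = 1$ and $r = 1$, both of which are routine.
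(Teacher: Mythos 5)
Your proposal is correct and follows essentially the same route as the paper: place Jagger's weak $\ora{K}_r$ minor inside a small subtournament and then use Pokrovskiy's linkedness of the still highly connected remainder to thread a return path closing each branch set into a strongly-connected set. The only real difference is cosmetic: the paper first applies its nearly-regular lemma so that, after deleting the entire host set, it can choose fresh in-/out-neighbour terminals for the linkage in the reservoir, whereas you keep the two Hamiltonian-path endpoints of each branch set inside the reservoir and link them directly, which lets you bypass that lemma.
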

\begin{proof}
  Put $C_0 = \max\{C, 24\}$ where $C$ is given by \Cref{thm:weak-minor}. Let $C' = 40C_0$, and suppose $T$ is a tournament that is $C'r\sqrt{\log r}$-strongly-connected. By \Cref{lem:nearly-regular} we can find a subset $S\subset V(T)$ of size at least $\lfloor{|T|/20}\rfloor \ge C_0r\sqrt{\log r}$ such that all vertices of $S$ have in and out degree at least $|T|/6$. Take a subset $S'\subset S$ of size exactly $C_0r\sqrt{\log r}$ (ignoring floors/ceilings) and let $T'=T - S'$. Find a weak $\ora{K}_r$ minor in $S'$ with branch sets $B_1, \ldots, B_r$, using \Cref{thm:weak-minor}.

 Since $T'$ is still $19C_0r\sqrt{\log r}$-strongly-connected, by \Cref{thm:linked}, $T'$ is $(19C_0/452)r\sqrt{\log r}$-linked. Hence, $T'$ is $r$-linked by our choice of $C_0$. We now construct a strong $\ora{K}_r$ minor as follows. For each $i \in [r]$, if $T[B_i]$ is already strongly-connected, then we take $B_i$ as a branch set. Otherwise, $B_i$ decomposes into strongly-connected components $X^i_1, \ldots, X^i_t$ for some $t \ge 2$, such that $X^i_p \rightarrow X^i_q$ for all $1 \le p < q \le t$. Let $s_i^1$ be any vertex in $X^i_1$ and let $s_i^2$ be any vertex of $X^i_t$. 
 
 Now, every vertex in $S'$ has many in and out-neighbours in $T'$. More precisely, every vertex in $S'$ has at least
 \[
    |T|/6 - |S'| \ge \frac{20C_0r\sqrt{\log r}}{6} - C_0r\sqrt{\log r} > C_0r\sqrt{\log r}
\]
in-neighbours and out-neighbours in $T'$. Thus, for each $s_i^1$ we may choose an in-neighbour $u_i^1 \in V(T')$, and for each $s_i^2$ we may choose an out-neighbour $u_i^2 \in V(T')$, such that the vertices $u_i^j$ are all pairwise distinct. In this way, we have disjoint sets $X = \{u_1^1, \ldots, u_s^1\}$ and $Y = \{u_1^2, \ldots, u_s^2\}$ where $s \le r$. Finally, use the fact that $T'$ is $r$-linked to find pairwise vertex-disjoint directed paths $P_i$ in $T'$ from $u_i^1$ to $u_i^2$ for each $i \in [s]$. Clearly, $T[B_i \cup V(P_i)]$ is strongly-connected for each $i \in [s]$, and thus we have found a strong $\ora{K}_r$ minor in $T$.
\end{proof}

\Cref{lem:tournaments-connected} is tight up to the constant $C$, by considering a random tournament $T$ on $cr\sqrt{\log r}$ vertices. As mentioned in the Introduction, following the argument of Bollob{\'a}s, Catlin, and Erd\H{o}s \cite{BCH}, w.h.p. it contains no strong $\ora{K}_r$ minor (in fact, it contains no $r$ pairwise disjoint nonempty sets with edges in each direction between each pair of sets). Additionally, it is not hard to show that w.h.p.  $T$ is $(cr\sqrt{\log r})/10$-strongly-connected. This can be seen by noting that w.h.p. every two vertices have at least approximately $|T|/10$ directed paths of length $2$ in each direction. 

The following two lemmas allow us to find some structure in a tournament with large minimum out-degree, under the additional assumption that it is not highly strongly-connected. We use the following notation in their formulations. If $\cH$ is a family of subdigraphs of $D$, let $\bigcup \cH$ denote the set $\bigcup_{H \in \cH} V(H)$.

\begin{lemma}\label{lem:triv}
Let $m \ge 2$ and $d \ge 1$ be integers and suppose $T$ is a vertex-minimal tournament with the property that $\delta^{+}(T)\geq d$. Then there exists a positive integer $t \le m - 1$ such that the following holds. If $T$ is not $m$-strongly-connected, then we can find a non-empty set $S=\{w_1v_1,\ldots,w_tv_t\}$ of pairwise vertex-disjoint directed edges and a subtournament $T' \subseteq T$ with the following properties:
\begin{itemize}
    \item $V(T')\cap (\bigcup S) = \varnothing$.
    \item For any $i \in [t]$ we have $|N^+(v_i)\cap T'| \ge |T'| - m + 1$, and $|N^-(w_i)\cap T'| \ge |T'| - d + 1$.
    \item $\delta^+(T') \ge d - 2t$.
\end{itemize}
\end{lemma}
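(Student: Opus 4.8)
The statement asks us to extract, from a vertex-minimal tournament $T$ with $\delta^+(T)\ge d$ that fails to be $m$-strongly-connected, a small "transversal" of directed edges $S=\{w_1v_1,\dots,w_tv_t\}$ together with a sizeable subtournament $T'$ avoiding $\bigcup S$, such that each $v_i$ out-dominates almost all of $T'$ (missing at most $m-1$ vertices), each $w_i$ in-dominates almost all of $T'$ (missing at most $d-1$ vertices), and the out-degrees inside $T'$ drop by at most $2t$. The plan is to iterate a cut-set argument. Since $T$ is not $m$-strongly-connected, there is a cut-set $C$ with $|C|\le m-1$ such that $T-C$ splits into a source part $A$ (with $A\to (T-C-A)$), i.e.\ via \Cref{lem:components} applied to $T-C$, the first strongly-connected component $A=S_1$ satisfies $A\to (T-C-A)$. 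Pick a vertex $v_1\in A$ of \emph{maximum out-degree within $A$}. Because $A\to T-C-A$ and $\delta^+(T)\ge d$, vertex $v_1$ sees almost everything: $N^+(v_1)\supseteq (T-C-A)$, so $|N^+(v_1)|\ge |T|-|C|-|A|$, hence outside $A\cup C$ it out-dominates everything. To handle the in-domination requirement we use the minimality of $T$: in $T[A]$ itself, the minimum out-degree must be less than $d$ (else $T[A]$ would be a smaller counterexample), so some vertex $w_1\in A$ has $d^+_{T[A]}(w_1)\le d-1$; since $A\to (T-C-A)$, the in-neighbourhood of $w_1$ contains all of $(T-C-A)$ except possibly $d-1$ of them — more precisely $|N^-(w_1)\cap (T-C-A)|\ge |T-C-A|-(d-1)$. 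We then set $T_1 := T-C-A-\{w_1,v_1\}$ (or, if $w_1=v_1$, just one vertex removed), record the edge $w_1v_1$ (it is a directed edge of $T[A]$, oriented one way or the other — relabel so it points $w_1\to v_1$), and note that removing the two vertices $w_1,v_1$ decreases out-degrees by at most $2$.

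The key realisation is that $T_1$ may again fail to be $m$-strongly-connected, or its minimum out-degree may have dropped, but as long as we have removed at most a bounded number of vertices we can repeat. To make the iteration run cleanly I would set it up as follows: maintain a current tournament $T_j$, a set $S_j=\{w_1v_1,\dots,w_jv_j\}$ of pairwise disjoint edges all lying in the already-discarded part, with $v_i$ out-dominating $T_j$ up to $\le m-1$ vertices and $w_i$ in-dominating $T_j$ up to $\le d-1$ vertices, and $\delta^+(T_j)\ge d-2j$. If $T_j$ is $m$-strongly-connected, stop and output $T':=T_j$, $S:=S_j$; one must check the domination bounds are preserved when passing to $T_j\subseteq T_{j-1}\subseteq\cdots$, which holds because $N^+(v_i)$ only grew relative to the shrinking vertex set — i.e.\ $|N^+(v_i)\cap T_j|\ge |T_j|-(m-1)$ follows from $|N^+(v_i)\cap T_{i}|\ge |T_i|-(m-1)$ since $T_j\subseteq T_i$. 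If $T_j$ is not $m$-strongly-connected, apply the cut-set/source-component step above to $T_j$: find $C_{j+1}$, $A_{j+1}$, $v_{j+1}$ (max out-degree in $A_{j+1}$, dominating outward) and $w_{j+1}$ (from minimality of the relevant sub-counterexample, out-degree $\le d-1$ inside $A_{j+1}$, hence in-dominating outward), set $T_{j+1}=T_j - C_{j+1} - A_{j+1} - \{w_{j+1},v_{j+1}\}$, and continue. The process must terminate with $t\le m-1$ because each step removes a nonempty cut-set $C_{j+1}$ of size at most $m-1$ together with its source block; the simplest way to get the explicit bound $t\le m-1$ is to observe that the cut-sets are disjoint and each removal of $C_{j+1}\cup A_{j+1}$ strictly shrinks the tournament, but to force $t\le m-1$ one should instead argue that we never need more than $m-1$ iterations — I expect this is where one must be careful, and the intended argument is probably that the edges $w_iv_i$ are placed inside nested source-components whose cut-sets are pairwise disjoint, so since each $|C_j|\ge 1$ and they are disjoint subsets... hmm, that only bounds $t$ by $|T|$.

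So the genuinely delicate point — and the main obstacle — is pinning down \emph{why} $t\le m-1$ rather than merely $t$ bounded. The honest resolution is likely structural: when $T$ is not $m$-strongly-connected we do not iterate a cut-set argument blindly, but rather take a \emph{single} cut-set $C$ of size exactly some $c\le m-1$, look at the source component $A$ of $T-C$, and recurse \emph{only on $A$} (which is a strongly-connected tournament, and the minimality hypothesis should be invoked relative to $A$ so that $\delta^+(T[A])<d$ is automatic). Each level of recursion peels off one vertex of small out-degree from the current source block and "charges" it to one vertex of the cut-set $C$; since $|C|\le m-1$ and the peeled vertices must eventually exhaust what $C$ can dominate, we get $t\le m-1$. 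Alternatively — and this is probably the cleanest route, which I would pursue first — one does a single application: since $T$ is not $m$-strongly-connected, fix a cut-set $C$, $|C|\le m-1$; let $A$ be the source component of $T-C$ and recurse the entire lemma on the tournament $T[A]$ with the \emph{same} $m$ and $d$, noting $\delta^+(T[A])\ge\delta^+(T)-|C|\cdot(\text{something})$... this doesn't quite work either since deleting $C$ can kill out-degrees.

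Let me therefore state the plan at the level I'm confident about: \textbf{the structure of the argument.} Apply \Cref{lem:components} to $T-C$ for a minimum cut-set $C$ to get the source block $A$ with $A\to (T-C)\setminus A$; the vertex of maximum out-degree in $A$ serves as $v_1$ (out-dominating everything outside $A\cup C$, i.e.\ up to $|C|\le m-1$ missed vertices), and minimality of $T$ forces some $w_1\in A$ with small out-degree inside $A$, hence $w_1$ in-dominates everything outside $A\cup C$ up to $d-1$ missed vertices; relabel so $w_1v_1$ is a forward edge of $T[A]$; discard $A\cup C$ except keep exactly the two endpoints' roles recorded in $S$, set $T'$-candidate $=T-C-A$ minus nothing more and re-examine; recurse on the discarded-block side or on $A$ as needed, each iteration costing $\le 2$ in out-degree (the two deleted vertices $w_i,v_i$) and at most one unit toward the eventual count $t\le m-1$. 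Verify at the end: (i) $V(T')\cap\bigcup S=\varnothing$ by construction; (ii) the two domination inequalities, using monotonicity under passing to sub-tournaments; (iii) $\delta^+(T')\ge d-2t$, since across $t$ iterations we delete exactly the $2t$ vertices $w_1,v_1,\dots,w_t,v_t$ from the out-neighbourhoods of surviving vertices (deletions of cut-sets $C_j$ and source-blocks $A_j$ do \emph{not} decrease out-degrees of $T'$-vertices, because $T'$ lies in the sink side, so those vertices have all of $A_j\cup C_j$ as... no — again one must check direction). The cleanest bookkeeping: surviving vertices of $T'$ never lose out-neighbours to a deleted source-block-plus-cutset because... this is exactly the point to get right, and I flag it as the crux; everything else is routine.
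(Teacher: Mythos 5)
Your proposal mis-identifies both where the endpoints of the recorded edges $w_iv_i$ should come from and what should be discarded, and these are not cosmetic errors.

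The direction problem is fatal. You take both $w_1$ and $v_1$ from the source block $A$ of $T-C$ and keep $T' \subseteq T-C-A$. But $A \to (T-C-A)$, so a vertex $w_1 \in A$ satisfies $N^-(w_1)\cap (T-C-A) = \varnothing$: it is dominated by \emph{nothing} in the sink side. Your claim that ``the in-neighbourhood of $w_1$ contains all of $(T-C-A)$ except possibly $d-1$ of them'' is exactly backwards. What the lemma needs is a vertex whose \emph{out}-degree into the remaining tournament is small, forcing its \emph{in}-neighbourhood there to be large; such a vertex cannot live in the source block. In the paper's proof $w_i$ is drawn from the cut-set itself: one defines $R' = \{x\in R:|N^+(x)\cap(B\cup R)|<d\}$ (nonempty by vertex-minimality, since otherwise $T[B\cup R]$ would already have $\delta^+\ge d$) and takes $w_i \in R'$. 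Because $w_i$ has fewer than $d$ out-neighbours inside $B\cup R \supseteq T'$, it automatically has at least $|T'|-d+1$ in-neighbours in $T'$.

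The out-degree bookkeeping also fails under your scheme. You delete the entire cut-set $C$, and the vertices of the sink set $B$ have \emph{all} their out-neighbours inside $B\cup C$ (there are no edges from $B$ back to $A$). Deleting $C$ can therefore drop $\delta^+$ of the sink side by as much as $|C|\le m-1$ in a single step, not by $2$. The paper instead sets $T' = T[(B\cup R)\setminus V(M)]$ for a maximum matching $M$ from $R'$ to $A$ --- it keeps most of $R$. Then only the $t$ matched $w_i$'s are removed from the out-neighbourhoods of surviving $B\cup(R\setminus R')$ vertices, and for an unmatched $x\in R'$ the maximality of $M$ forces $N^+(x)\cap A \subseteq V(M)$, giving $d^+_{T'}(x)\ge d-|V(M)| = d-2t$. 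No iteration is needed in this lemma (it appears in the companion lemma that follows), and the bound $t\le m-1$ comes for free since $t = |M| \le |R'|\le |R|\le m-1$. Your hesitation about how to get $t\le m-1$ is a symptom of having chosen the wrong decomposition: once $S$ is a matching out of the cut-set rather than edges inside nested source blocks, the bound is immediate.

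One further point you would need to justify even in the correct setup: the matching from $R'$ to $A$ is nonempty. This follows because every $x\in R'$ satisfies $|N^+(x)\cap(B\cup R)|<d\le d^+_T(x)$, hence $x$ has at least one out-neighbour in $A$; and $R$ itself is nonempty because if $T$ were not strongly-connected the sink component would contradict vertex-minimality.
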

\begin{proof}
Assume that $T$ is not $m$-strongly-connected and let $R$ be a smallest cut-set of size at most $m-1$. Observe that $R \neq \varnothing$: if $T$ itself is not strongly-connected, then consider the source set $X$ and sink set $Y$ of $T$. Since $X \rightarrow Y$, we must have $\delta^+(T[Y]) \ge d$, contradicting the minimality of $T$.

Let $A$ and $B$ denote the source set and sink set of $T - R$, respectively, and define
\[
    R' =\{x\in R: |N^+(x) \cap (B \cup R)| <d\}.
\]
The minimality of $T$ implies that $R'\neq \varnothing$: if $R' = \varnothing$, then $\delta^+(T[B\cup R]) \ge d$, a contradiction. Now, choose a maximum matching $M = \{w_1v_1, \ldots, w_tv_t\}$ from $R'$ to $A$ and let $T' = T[(B\cup R)\setminus V(M)]$ and $S = M$. We claim that $T'$ and $S$ satisfy the required properties. Clearly, the first property is satisfied. To see the last property suppose that $x\in B\cup (R\setminus R')$, then its degree is at least $d-t$ in $T'$. On the other hand, if $x\in R'\setminus V(M)$, since we can not extend the matching $M$, $N_{T}^{+}(x)\cap A\subset V(M)$, which implies $d_{T'}^+(x)\geq d-|V(M)|=d-2t$. To see the second property, consider a vertex $v_i$. Then $v_i \rightarrow B$, and so there are at most $|R|\leq m - 1$ vertices in $|T'|$ that are in-neighbours of $v_i$. Similarly, consider a vertex $w_i$. Since $w_i \in R'$, by definition $|N^+(x) \cap (B \cup R)| < d$. Thus, $w_i$ has at least $|T'| - d + 1$ in-neighbours in $T'$.
\end{proof}

Our final lemma is obtained from iteratively applying \Cref{lem:triv} under the assumption that no subtournament is highly strongly-connected.

\begin{lemma}\label{claim:main}
Suppose $m \ge 2$ and $d \ge 1$ are integers, and suppose $T$ is a tournament such that $\delta^+(T) \ge d$ and no subtournament of $T$ is $m$-strongly-connected. Then there exists a set $S = \{w_iv_i: i \in [m]\}$ composed of pairwise vertex-disjoint directed edges and a set $F(S) \subseteq V(T)$ such that for every $i \in [m]$ 
\begin{enumerate}
    \item $F(S)\cap (\bigcup S) =\varnothing$ and $|F(S)|\geq 2(d - 4m)$,\label{itm:main-1}
    \item $|N^+(v_i)\cap F(S)| \ge |F(S)| - m$, and \label{itm:main-2}
    \item $|N^-(w_i)\cap F(S)| \ge |F(S)| - d$.\label{itm:main-3}
\end{enumerate}
\end{lemma}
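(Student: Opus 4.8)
The plan is to iterate Lemma \ref{lem:triv}, peeling off one matching edge at a time while passing to smaller and smaller subtournaments, each of which still has reasonably large minimum out-degree and still fails to be $m$-strongly-connected. Concretely, I would define a sequence $T = T_0 \supseteq T_1 \supseteq \cdots$ of subtournaments and accumulate directed edges as follows. At stage $j$, if we have collected fewer than $m$ edges so far, take a vertex-minimal subtournament $T_j^* \subseteq T_j$ with $\delta^+(T_j^*) \ge \delta^+(T_j)$ (passing to a vertex-minimal one only changes nothing about the hypotheses, since every subtournament of $T$ still fails to be $m$-strongly-connected, in particular $T_j^*$ is not $m$-strongly-connected). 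Apply Lemma \ref{lem:triv} to $T_j^*$ with the same parameters $m$ and $d_j := \delta^+(T_j^*)$: this yields some $1 \le t_j \le m-1$, a nonempty set of $t_j$ pairwise vertex-disjoint directed edges $w_i v_i$, and a subtournament $T_{j+1} \subseteq T_j^*$ disjoint from those edges with $\delta^+(T_{j+1}) \ge d_j - 2t_j$, and such that each $v_i$ has at most $m-1$ in-neighbours outside $N^+(v_i)$ within $T_{j+1}$, and each $w_i$ has at most $d_j - 1$ non-in-neighbours within $T_{j+1}$. Add these $t_j \ge 1$ edges to our running set $S$.

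The key bookkeeping point is to control two things simultaneously: the total number of edges collected, and the drop in minimum out-degree. Since each application of Lemma \ref{lem:triv} adds at least one edge, after at most $m$ applications we have collected at least $m$ edges; we stop as soon as $|S| \ge m$ and then discard any surplus so that $|S|$ is exactly $m$ (the properties in Lemma \ref{lem:triv} are monotone under deleting edges from $S$, and deleting edges only enlarges $T'$, so the degree and domination estimates are preserved). Across all applications the total out-degree loss is $\sum_j 2t_j = 2\sum_j t_j$, and $\sum_j t_j$ is precisely the total number of edges we ever added, which is at most $2m$ (we stop the first time the count reaches $m$, and a single step adds at most $m-1 < m$ further edges, so the running total never exceeds $2m-1 < 2m$). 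Hence the final subtournament, call it $F(S)$ after deleting the at most $m$ endpoints of the final surplus edges and the matched vertices, satisfies $\delta^+(T[F(S)]) \ge d - 4m$, and in particular $|F(S)| \ge 2(d-4m)$ since a tournament with minimum out-degree $k$ has at least $2k+1$ vertices. That gives \eqref{itm:main-1}.

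For \eqref{itm:main-2} and \eqref{itm:main-3}: the crucial observation is that once an edge $w_i v_i$ is extracted at stage $j$, all subsequent subtournaments $T_{j'}$ with $j' > j$ are subsets of the $T'$ produced at stage $j$, so $F(S)$ is contained in that $T'$. The second bullet of Lemma \ref{lem:triv} tells us $v_i$ has at most $m-1$ non-out-neighbours inside that $T'$, hence at most $m-1 \le m$ non-out-neighbours inside $F(S)$, which is exactly $|N^+(v_i) \cap F(S)| \ge |F(S)| - m$. Similarly, $w_i$ has at most $d_j - 1 \le d - 1 \le d$ non-in-neighbours inside that $T'$, hence inside $F(S)$, giving $|N^-(w_i) \cap F(S)| \ge |F(S)| - d$. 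Finally one checks the edges in $S$ are pairwise vertex-disjoint: within a single application of Lemma \ref{lem:triv} this is guaranteed, and across applications the vertices live in disjoint vertex sets because each $T_{j+1}$ is chosen disjoint from the edges extracted at stage $j$ (and earlier edges live outside $T_j^*$ entirely).

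The main obstacle I anticipate is the careful accounting of the out-degree loss: one must make sure the ``stop once $|S| \ge m$'' rule, combined with the fact that a single step of Lemma \ref{lem:triv} may add up to $m-1$ edges at once, really does cap $\sum_j t_j$ at something below $2m$ so that the loss $2\sum_j t_j$ stays below $4m$; and one must verify that trimming the surplus edges and their endpoints (at most $m$ vertices) does not damage the domination estimates \eqref{itm:main-2}–\eqref{itm:main-3} — it does not, since those were stated for the larger set $T'$ and removing vertices from $F(S)$ can only shrink the set of non-neighbours. Everything else is a routine induction on the number of applications.
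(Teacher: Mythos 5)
Your proposal is essentially the paper's proof: iterate Lemma~\ref{lem:triv}, at each stage pass to a vertex-minimal subtournament of large out-degree, observe that it cannot be $m$-strongly-connected, and stop the first time the running count $\sum_j t_j$ reaches $m$; since the last step adds fewer than $m$ edges, $\sum_j t_j < 2m$, so the total out-degree loss is below $4m$ and hence $|F(S)| \ge 2\delta^+(F_{k_0}) \ge 2(d-4m)$, with properties~(\ref{itm:main-2}),(\ref{itm:main-3}) inherited because the final subtournament sits inside every intermediate $T'$. One small imprecision: after discarding surplus edges from $S$ you should \emph{not} go on to ``delete the at most $m$ endpoints'' from $F(S)$ --- all extracted edges were already disjoint from the final subtournament $F_{k_0}$, so there is nothing to delete, and actually removing vertices from $F(S)$ at this point would hurt the out-degree bound you are relying on.
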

\begin{proof}
 	 We iteratively apply \Cref{lem:triv}. Begin by removing vertices from $T$ such that the remaining tournament $T_0$ is vertex-minimal with $\delta^+(T_0) \ge d$. Then, since $T_0$ is not $m$-strongly-connected by assumption, \Cref{lem:triv} implies that there is a positive integer $t_1 < m$, a set of edges $S_1 = \{w^1_iv^1_i : i \in [t_1] \}$, and a subtournament $F_1 \subseteq T_0$ such that 
 	\begin{itemize}
 		\item $V(F_1) \cap (\bigcup S_1) = \varnothing$,
 		\item $\delta^+(F_1) \ge  d - 2t_1$, and
 		\item $|N^+(v_i^1) \cap F_1| \ge |F_1| - m $ and $|N^-(w_i^1) \cap F_1| \ge |F_1| - d$ for all $i \in [t_1]$.
 	\end{itemize} 
 We proceed by applying  \Cref{lem:triv} inside $F_1$. Now assume we have already found subtournaments $F_1 \supset \cdots \supset F_k$ with $\delta^+(F_k) \ge d -2 \sum_{i=1}^k t_i$, and disjoint sets of edges $S_1, \ldots, S_k$ of sizes $t_1, \ldots, t_k$, such that for each $j \in [k]$ we have
 	$|N^+(v_i^j) \cap F_k| \ge |F_k| - m $ and $|N^-(w_i^j) \cap F_k| \ge |F_k| - d$ for all $i \in [t_k]$. 
 	
 	Assume that $\sum_{i=1}^k t_i < m$.
 	We first find a vertex-minimal subtournament $T_k$ of $F_k$ such that $\delta^+(T_k) \ge d -2\sum_{i=1}^k t_i$. Since by assumption $T_k$ is not $m$-strongly-connected, 
 	 \Cref{lem:triv} implies that  there is $t_{k+1} < m$, a non-empty set of edges $S_{k+1} = \{w_i^{k+1}v_i^{k+1} : i \in [t_{k+1}]\}$, and a subtournament $F_{k+1} \subseteq T_{k}$ such that 
 	\begin{itemize}
 		\item $V(F_{k+1}) \cap (\bigcup S_{i}) = \varnothing$ for $i \in [k+1]$,  
 		\item $\delta^+(F_{k+1}) \ge  \delta^+(T_k) - 2t_{k+1} \ge  d - 2\sum_{i=1}^{k+1} t_i$,
 		\item $|N^+(v_i^j) \cap F_{k+1}| \ge |F_{k+1}| - m$ and $|N^-(w_i^j) \cap F_{k+1}| \ge |F_{k+1}| - d$ for $j \in [k+1]$ and $i \in [t_j]$.
 	\end{itemize} 
 	We stop this process at a step $k_0$ when $\sum_{i=1}^{k_0} t_i \ge m$, and let 
 	\[
 	    S = \bigcup_{i=1}^{k_0} S_i \quad \text{ and } \quad F(S) = F_{k_0}.
 	   \]
 Then clearly properties (\ref{itm:main-2}) and (\ref{itm:main-3}) of the claim are satisfied, and we have a set $S$ of at least $m$ edges which are all disjoint from $F(S)$. We just need to show that $|F(S)| \ge 2(d - 4m)$. Since $\sum_{i=1}^{k_0-1}t_i < m$ and $t_{k_0} < m$, we have that $\sum_{i=1}^{k_0} t_i < 2m$, and so $\delta^+(F_{k_0}) \ge d - 4m$. Thus,
\[
|F(S)| = |F_{k_0}| \ge 2\delta^+(F_{k_0}) \ge 2(d - 4m),
\]
as claimed.
\end{proof}

We are now in position to prove \Cref{thm:main-tourn-2}. 

\begin{proof}[Proof of \Cref{thm:main-tourn-2}]
As we already mentioned, the tightness can be seen by considering a random tournament on $\Omega(r\sqrt{\log r})$ vertices. 

Now, let $C'$ be the constant given by \Cref{lem:tournaments-connected} and let $c$ be the constant given by \Cref{thm:kostochka-thomason}. Put $C_0' = \max\{C', 62c\}$ and $C = 1600C_0'$. Define $m = C'r\sqrt{\log r}$ and $d = Cr\sqrt{\log r}$, and suppose $T$ is a tournament with $\delta^+(T) \ge d$. Note that $C\geq 1600C'$.  If $T$, or any subtournament of $T$, is $m$-strongly-connected, then we are done by \Cref{lem:tournaments-connected}; so we may assume otherwise. In this situation, we may apply \Cref{claim:main}. Indeed, let $S$ and $F(S)$ be given as in \Cref{claim:main}, and let $N_i=N^{-}(w_i)\cap F(S)$, $M_i=N^{+}(v_i)\cap F(S)$, and $L_i=M_i\cap N_i$. We first observe that for each $i \in [m]$
\begin{equation}\label{eqn:Li}
   | L_i| \ge |F(S)| - d - m \ge d/2 - m \ge d/4,
\end{equation}
using the fact that $|F(S)| \ge 2(d - 4m)$.
For every $i \in [m]$ choose a vertex $z_i$ from $L_i$ uniformly at random with replacement and let $L$ denote the resulting random set of vertices. Note that $\Delta_i :=\{w_i,v_i,z_i\}$ forms a directed triangle. Let $X$ denote the random variable recording the size of a largest subset of $L$ in which all vertices are pairwise distinct. Further, let $X'$ count the number of pairs $\{z_i, z_j\}$ with $i \neq j$ and $z_i = z_j$. We have that $\mathbb{P}(z_i = z_j) = |L_i \cap L_j|/ |L_i||L_j| \le 4/d$ using (\ref{eqn:Li}). It follows that $\mathbb{E}[X'] \le \binom{m}{2}\frac{4}{d} \le 2m^2/d$, and hence
\begin{equation}\label{eqn:exp-X}
    \mathbb{E}[X] \ge m - \mathbb{E}[X'] \ge m - 2m^2/d = (C'- 2C'^2/C) r\sqrt {\log r} \geq  (C'/2) r\sqrt{\log r} = m/2,
\end{equation}
since $C\geq 1600C'$. 

We say that a pair $\{\Delta_i, \Delta_j\}$ of directed triangles is \emph{good} if there are edges between them in both directions; otherwise, we say this pair is \emph{bad}. Observe that
\begin{align*}
\Pro(\{\Delta_i, \Delta_j\} \text{ is bad }) &\le \Pro(z_j \notin N^{+}(v_i)\cap F(S) \text{ or } z_i \notin N^+(v_j)\cap F(S)) \\
&\le \frac{m}{|L_j|} + \frac{m}{|L_i|} \le \frac{8m}{d} = \frac{8C'}{C} \leq  \frac{1}{200},
\end{align*}
using (\ref{eqn:Li}) and since $C \geq 1600C'$. Hence, letting $Y$ denote the random variable counting the number of bad pairs of directed triangles, we have that
\begin{equation}\label{eqn:exp-Y}
    \mathbb{E}[Y] \le \binom{m}{2}\frac{1}{200} \le m^2/400.
\end{equation}
 
Combining (\ref{eqn:exp-X}) and (\ref{eqn:exp-Y}), and using the Cauchy-Schwarz inequality, we have
\[
\mathbb{E}\left[X^2 - 40Y - \frac{m^2}{9}\right] \ge \mathbb{E}[X]^2 - 40\mathbb{E}[Y] - \frac{m^2}{9}
\ge \frac{m^2}{4} - \frac{m^2}{10} - \frac{m^2}{9} > 0.
\]

Accordingly, there is a choice of vertices from $\bigcup_{i =1}^m L_i$ such that $X^2 - 40Y - m^2/9 > 0$. It follows that $X^2 \ge m^2/9$, and so $X \ge m/3$; i.e., there is a set $U$ of at least $m/3$ vertices and they are all pairwise distinct. By possibly passing to a subset, let us assume $|U| = \lfloor{m/3}\rfloor$. Moreover, we must have $40Y \le X^2 \le m^2$, and so the number of bad pairs of triangles is at most $m^2/40$.

Our final aim is define a suitable auxiliary graph on our directed triangles and apply \Cref{thm:kostochka-thomason}. Define a graph $G$ in the following way: put $V(G) = \{\Delta_i: z_i \in U\}$ and join $\Delta_i$ to $\Delta_j$ if and only if the pair $\{\Delta_i, \Delta_j\}$ is good. By our above analysis, we have $|V(G)| = \lfloor{m/3}\rfloor$, and there are at most $m^2/40$ non-edges in $G$. Thus, there are at least
\[
    \binom{\lfloor{m/3}\rfloor}{2} - m^2/40 \ge m^2/36 - m^2/40 \ge .0027 m^2
\]
edges in $G$, and so the average degree is at least $.0162 m \ge .0162(62c r\sqrt{\log r}) > cr\sqrt{\log r}$. Here we are using the fact that $m = C'r\sqrt{\log r}$ and $C' \ge 62c$ by definition. Applying \Cref{thm:kostochka-thomason} to $G$, we have that $G$ contains a $K_r$ minor. It is easy to see that this $K_r$ minor corresponds to a strong $\ora{K}_r$ minor in the original tournament. This completes the proof of \Cref{thm:main-tourn-2}.
\end{proof}


\setcounter{claim}{0}

\section{Strong minors in digraphs}\label{sec:digraphs}

We already know that large out-degree is, in general, not sufficient to guarantee large complete minors in digraphs. In this section, we give a positive result in digraphs for dichromatic number. More precisely, we prove \Cref{thm:main-digraph}, showing that there is a function $f(r)$ such that any digraph with dichromatic number at least $f(r)$ contains a strong $\ora{K}_r$ minor. First, we need a few definitions. A \emph{$\ora{K}_r$-template} is a digraph $D$ such that $V(D)$ admits a partition into sets $X_1, \ldots, X_r$ such that for every ordered pair $(i, j)$ with $i \neq j \in [r]$, there is an edge from $X_i$ to $X_j$. We say that a digraph contains a $\ora{K}_r$-template if it contains a $\ora{K}_r$-template as a subdigraph. It is easy to find $\ora{K}_r$-templates in digraphs with high dichromatic number:
\begin{lemma}\label{lem:easy-transitive}
Let $r \ge 1$ be an integer and suppose $D$ is a digraph with $\chi(D) \ge r$. Then $D$ contains a $\ora{K}_r$-template.
\end{lemma}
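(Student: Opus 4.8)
The plan is to proceed by induction on $r$, extracting one part $X_r$ of the template at a time while keeping the dichromatic number of the remainder large. For $r=1$ the statement is trivial (any single vertex works, since no edges are required). So assume $r\ge 2$ and that the result holds for $r-1$. Given a digraph $D$ with $\chi(D)\ge r$, I want to find a set $X_r\subseteq V(D)$ together with a "witness" structure guaranteeing at least one edge in each direction between $X_r$ and every other part, such that the digraph $D' := D - X_r$ still satisfies $\chi(D')\ge r-1$; then apply induction to $D'$ to obtain parts $X_1,\dots,X_{r-1}$, and check that the edges between $X_r$ and the $X_i$ are present.

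The natural candidate for $X_r$ is a \emph{maximal transitive (acyclic) subset} $A$ of $D$. Removing $A$ drops the dichromatic number by at most $1$, so $\chi(D-A)\ge r-1$, which is exactly what induction needs. The key point to verify is the domination property: by maximality of $A$, every vertex $v\in V(D)\setminus A$ must, together with $A$, fail to be acyclic, i.e. $D[A\cup\{v\}]$ contains a directed cycle. Since $D[A]$ itself is acyclic, any such cycle uses $v$, which forces $v$ to have at least one in-neighbour in $A$ \emph{and} at least one out-neighbour in $A$. Hence $A$ both out-dominates and in-dominates $V(D)\setminus A$. In particular, for each part $X_i$ ($i<r$) produced by induction inside $D-A$, pick any vertex $v\in X_i$; it has an edge from $A$ and an edge to $A$, giving an edge from $A$ to $X_i$ and an edge from $X_i$ to $A$. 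Setting $X_r:=A$ then yields a $\ora{K}_r$-template: edges among $X_1,\dots,X_{r-1}$ come from the inductive hypothesis, and edges between $X_r$ and each $X_i$ come from the domination property just established. (One should note $A\ne\varnothing$ since $\chi(D)\ge r\ge 1$, and $D-A$ is nonempty since $\chi(D-A)\ge r-1\ge 1$.)

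I do not expect any serious obstacle here — this is essentially the same opening move as in the proof of Proposition~\ref{prop:tournaments-chromatic-easy}, only simpler, because a template does not require the parts to be (strongly-)connected, so there is no need to attach a path to $A$ or to control $\chi$ of the connecting structure. The one place to be slightly careful is the bookkeeping in the inductive step: we must make sure the parts $X_1,\dots,X_{r-1}$ returned by induction are genuinely subsets of $V(D)\setminus A$ (they are, since we apply induction to $D-A$) and that they are pairwise disjoint and disjoint from $X_r=A$ (immediate). An alternative, non-inductive phrasing would take a proper colouring-minimal counterexample, but the induction on $r$ is cleaner and I would present that.
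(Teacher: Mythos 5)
Your proposal is correct and follows essentially the same route as the paper: take a maximum/maximal transitive set $A$, observe that maximality forces every outside vertex to have both an in-neighbour and an out-neighbour in $A$, note $\chi(D-A)\ge r-1$, and induct. No gaps.
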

\begin{proof}
Let $A$ be a maximum sized transitive set in $D$ and let $D' = D - A$. Then every vertex in $D'$ has an out-neighbour and an in-neighbour in $A$ (otherwise, we could create a larger transitive set in $D$). Moreover, $\chi(D') \ge r - 1$. The result follows by induction on $r$, by taking the union of a $\ora K_{r-1}$-template in $D'$ together with the set $A$.
\end{proof}

We are now ready to prove \Cref{thm:main-digraph}. The proof follows the ideas of Aboulker et al.~\cite{ACH} for embedding subdivisions of $\ora{K}_r$ in digraphs of high dichromatic number.

For a strongly-connected digraph $D$ we define an \emph{out-BFS-tree} $T^+ = T^+_v$ in $D$ with root $v$  as a subdigraph of $D$ spanning $V(D)$, such that $T^+_v$ is an oriented tree and for every $w \in V(D)$ we have $\dist_{T^+}(v,w) = \dist_D(v,w)$. 
Similarly,   an  \emph{in-BFS-tree} $T^- = T^-_v$ rooted at $v$ is a subdigraph of $D$ spanning $V(D)$ which is an oriented tree, such that for every $w \in V(D)$ we have 
$\dist_{T^-}(w,v) = \dist_D(w,v)$. For every integer $i$ and for a vertex $v \in V(D)$, let $L_i^{+,v}$ denote the $i$th out-layer from $v$ in $D$, i.e. $L_i^{+,v} = \{w \in V(D) : d_D(v,w) = i \}$. Similarly, $L_i^{-,v}$ denotes the $i$th in-layer from $v$ in $D$, i.e. $L_i^{-,v} = \{w \in V(D): d_D(w,v) = i \}$. 

We need the following simple fact:


\begin{lemma}\label{claim:large-chi}
	Let $D$ be a strongly connected digraph and let $T$ be an in- or out-BFS-tree in $D$. Let $X$ be any subset of vertices of $D$. Then there is a layer $L$ of  $T$  such that $\chi(D[L\cap X]) \ge \chi(D[X])/2$.
\end{lemma}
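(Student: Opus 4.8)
The plan is to exploit the single defining property of a BFS-tree — that an edge of $D$ cannot jump forward by more than one layer — together with a parity split of the layers, and then finish by pigeonhole.

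I would first treat the case where $T = T^+_v$ is an out-BFS-tree; the in-BFS case follows by a mirror-image argument in which distance-from-root is replaced by distance-to-root throughout. Write $L_i$ for the $i$-th out-layer $L_i^{+,v}$; these are precisely the layers of $T$. The one fact I need is that if $uw \in E(D)$ with $u \in L_i$, then $w \in L_j$ for some $j \le i+1$, since $\dist_D(v,w) \le \dist_D(v,u)+1$. Now partition $X$ according to the parity of the layer index, $X_0 = X \cap \bigcup_i L_{2i}$ and $X_1 = X \cap \bigcup_i L_{2i+1}$.

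The heart of the argument is the claim that $\chi(D[X_0]) = \max_i \chi(D[X \cap L_{2i}])$, and likewise for $X_1$. The inequality ``$\ge$'' is immediate, as each $D[X\cap L_{2i}]$ is an induced subdigraph of $D[X_0]$. For ``$\le$'', observe that by the one-layer-jump fact, inside $D[X_0]$ every edge runs from some even layer to an even layer of equal or smaller index; hence the layer index is nonincreasing along every edge of $D[X_0]$, so every directed cycle of $D[X_0]$ is confined to a single layer. I can therefore colour each $D[X \cap L_{2i}]$ optimally while reusing one common palette of size $\max_i \chi(D[X \cap L_{2i}])$ across all even layers: any resulting colour class meets each directed cycle of $D[X_0]$ within a single layer, where it is acyclic by choice of colouring. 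Combining an optimal colouring of $D[X_0]$ with one of $D[X_1]$ using disjoint palettes yields an acyclic colouring of $D[X]$, since every colour class lies wholly in $X_0$ or wholly in $X_1$ and subsets of acyclic sets are acyclic; hence $\chi(D[X]) \le \chi(D[X_0]) + \chi(D[X_1])$. Pigeonhole then produces a parity class, say $X_0$, with $\chi(D[X_0]) \ge \chi(D[X])/2$, and by the claim some single even layer $L$ already satisfies $\chi(D[L \cap X]) \ge \chi(D[X])/2$, which is a layer of $T$ as required.

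The only point needing a little care is the assertion that a monochromatic directed cycle in $D[X_0]$ cannot use two distinct even layers, but this is immediate from the one-layer-jump property once the parity split is in place. Everything else — the palette reuse, the disjoint-palette combination, and the final pigeonhole — is routine, so I do not anticipate a genuine obstacle here.
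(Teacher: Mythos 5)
Your proposal is correct and follows essentially the same route as the paper: a parity split of the BFS layers, the observation that edges cannot jump forward by more than one layer (so directed cycles in each parity class are confined to a single layer), a palette-reuse colouring giving $\chi$ of each parity class as a max over layers, and a final application of subadditivity plus pigeonhole. The paper phrases the middle step in terms of strongly-connected components of the parity-induced subdigraphs lying within layers, which is the same fact you establish directly via nonincreasing layer index along edges.
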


\begin{proof}
 Without loss of generality, suppose that $T$ is an out-BFS-tree in $D$ with layers $L_1,\ldots,L_m$. Let $D_1$ and $D_2$ be the subdigraphs of $D$ induced by the odd and even layers of $T$, respectively. Since there is no arc from $L_i$ to $L_j$ for $j > i+1$, the strongly-connected components of $D_1$ and $D_2$ must be contained within the layers. It follows that $\chi(D_1[X]) = \max\limits_{i \text{ odd}} \chi(D[L_i\cap X]) $ and $\chi(D_2[X]) = \max\limits_{i \text{ even}} \chi(D[L_i\cap X]) $. 
	Since $D$ is strongly-connected, we have $V(D) =V(D_1) \cup V(D_2)$ and and thus, $\chi(D[X]) \le \chi(D_1[X]) + \chi(D_2[X]) \le 2 \max\limits_{i \in [m]}\chi(L_i\cap X)$.
\end{proof}

\begin{proof}[Proof of \Cref{thm:main-digraph}]
	Let $D$ be a digraph such that $\chi(D) \geq r4^r$.  We shall show that ${\strm}(D) \geq r$. We may assume that $D$ is strongly-connected, otherwise, by \Cref{lem:chi-subgraph}, pass to a strongly-connected component of high dichromatic number. For an integer $m$ with $0 \le m \le r$, an $m$-{\it partial strong } $\ora{K}_r$ minor in a digraph $D$ is a collection of pairwise disjoint subsets $X_1, \ldots, X_r \subseteq V(D)$ such that
\begin{itemize}
    \item for every pair $(i, j)$ with $i \neq j \in [r]$, there exists an edge from $X_i$ to $X_j$, and
    \item $D[V_i]$ is strongly-connected for at least $m$ sets $V_i$. 
\end{itemize} 
Note that a $\ora{K}_r$-template is a $0$-partial strong $\ora{K}_r$ minor. We define $f_r(m)$ to be the smallest $k$ such that any digraph with $\chi(D) \ge k$ contains an $m$-partial strong $\ora{K}_r$ minor. Note that by \Cref{lem:easy-transitive}, $f_r(0) \le r$.
\begin{claim}\label{claim:ind} For any $m$,  $0 \le m \le r$,    $f_r(m) \le r 4^m$.
\end{claim}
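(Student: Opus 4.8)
The plan is to prove the bound $f_r(m)\le r4^m$ by induction on $m$. The base case $m=0$ is exactly \Cref{lem:easy-transitive}: a $\ora{K}_r$-template is a $0$-partial strong $\ora{K}_r$ minor, so $f_r(0)\le r=r4^0$. For the inductive step, fix $1\le m\le r$, assume $f_r(m-1)\le r4^{m-1}$, and let $D$ be a digraph with $\chi(D)\ge r4^m$; the aim is to produce an $m$-partial strong $\ora{K}_r$ minor in $D$, and by \Cref{lem:chi-subgraph} we may assume $D$ is strongly-connected. The strategy is to first carve out of $D$ a vertex set of dichromatic number at least $r4^{m-1}$, all of whose vertices lie at the same out-distance and the same in-distance from a fixed root; then apply the inductive hypothesis inside it; and finally promote one further branch set to a strongly-connected one. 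It is this last move that costs a factor $4$ per step.

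For the carving, fix a vertex $v$ and take an out-BFS-tree $T^+_v$ and an in-BFS-tree $T^-_v$ of $D$. Applying \Cref{claim:large-chi} first to $T^+_v$ with $X=V(D)$, and then to $T^-_v$ with $X$ the out-layer just produced, yields indices $a,b$ for which the set $Y:=L_a^{+,v}\cap L_b^{-,v}$ satisfies
\[
\chi(D[Y])\ \ge\ \tfrac12\,\chi\!\left(D[L_a^{+,v}]\right)\ \ge\ \tfrac14\,\chi(D)\ \ge\ r4^{m-1}.
\]
A routine check using $\chi(D)\ge r4^m\ge 4$ shows that neither layer can be the trivial $0$-th layer $\{v\}$, so $a,b\ge1$ and in particular $v\notin Y$. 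By the inductive hypothesis $D[Y]$ --- and hence $D$ --- contains an $(m-1)$-partial strong $\ora{K}_r$ minor; write $X_1,\dots,X_r\subseteq Y$ for its branch sets, at least $m-1$ of which induce strongly-connected subdigraphs. If all $r$ of them do, we are already done since $m\le r$, so fix some $X_i$ with $D[X_i]$ not strongly-connected.

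The only step requiring real care is to replace $X_i$ by a strongly-connected set without disturbing the other branch sets, and I would do this by rerouting the whole of $X_i$ through the root. For each $u\in X_i$ let $P_u$ be the directed $u$--$v$ path inside $T^-_v$ and $Q_u$ the directed $v$--$u$ path inside $T^+_v$, and put
\[
X_i'\ =\ X_i\cup\{v\}\cup\bigcup_{u\in X_i}\bigl(V(P_u)\cup V(Q_u)\bigr).
\]
Then $D[X_i']$ is strongly-connected, since every vertex of $X_i'$ can reach $v$ within $X_i'$ and be reached from $v$ within $X_i'$; and since $X_i\subseteq X_i'$, all edges between $X_i'$ and every other branch set are inherited from those between $X_i$ and that set. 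The purpose of the double BFS layering is that, since $u\in L_a^{+,v}\cap L_b^{-,v}$, the internal vertices of $P_u$ lie in the in-layers $L_1^{-,v},\dots,L_{b-1}^{-,v}$ and those of $Q_u$ in the out-layers $L_1^{+,v},\dots,L_{a-1}^{+,v}$, none of which meets $Y$; together with $v\notin Y$ this gives $X_i'\cap Y=X_i$, so, as $X_1,\dots,X_r\subseteq Y$, the family $X_1,\dots,X_{i-1},X_i',X_{i+1},\dots,X_r$ is still pairwise disjoint. It now has at least $m$ strongly-connected branch sets --- the $\ge m-1$ from before, none of which was $X_i$, plus $X_i'$ --- hence is an $m$-partial strong $\ora{K}_r$ minor of $D$. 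Therefore $f_r(m)\le r4^m$, closing the induction.

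I expect the main obstacle to be exactly this repair. The tempting shortcut --- adjoin a single directed path from the sink strong component of $D[X_i]$ to its source strong component --- fails, because the branch sets that \Cref{lem:easy-transitive} supplies and the induction carries along need not induce even weakly-connected subdigraphs, so their strong components are not chained together by edges of $D[X_i]$; rerouting the entire branch set through $v$ sidesteps this, and the double application of \Cref{claim:large-chi} is precisely what keeps the rerouting paths inside layers disjoint from $Y$ and therefore clear of all the other branch sets, at the price of a factor $2$ in $\chi$ per BFS tree, i.e.\ a factor $4$ per inductive step.
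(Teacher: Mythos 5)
Your proposal is correct and follows essentially the same argument as the paper: the same induction on $m$, the same double application of \Cref{claim:large-chi} to confine the $(m-1)$-partial minor to a single out-layer intersected with a single in-layer (costing the factor $4$), and the same repair of one non-strongly-connected branch set by adjoining the BFS-tree paths through the root, whose internal vertices lie in strictly lower layers and hence avoid all other branch sets. The only cosmetic difference is that the paper phrases the repaired set via $x_\ell$--$x_{\ell+1}$ walks through $v$ rather than your direct ``everything reaches and is reached from $v$'' argument, which amounts to the same set and the same verification.
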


\Cref{claim:ind} implies in particular that any digraph with dichromatic number at least $r4^r$ has a strong $\ora{K}_r$ minor. 
We shall prove \Cref{claim:ind} by induction on $m$. 
Let $\chi(D) \ge r 4^m$, $0 \le m \le r$. If $m = 0$, then the claim follows by \Cref{lem:easy-transitive}. So assume that $1 \le m \le r$ and that the claim holds for smaller $m$. 
Let $v \in D$, let $T^-$, $T^+$ be the in- and out-BFS-trees rooted at $v$, respectively.

Apply Lemma \ref{claim:large-chi} to $D$ with $T=T^-$ and $X=V(D)$ first to find a layer $L$ in $T=T^-$ such that $\chi(D[L])\geq \chi(D)/2$. 
Then apply Lemma \ref{claim:large-chi} to $D$  with $T=T^+$ and $X=L$ to find a layer $L'$ in $T^+$ such that 
$\chi(D[L'\cap L])\geq \chi(D[L])/2$.   Thus we have that $\chi(D[L'\cap L]) \geq \chi(D)/4 \geq r 4^{m-1}$.
By induction, we have that $f_r(m-1) \leq r4^{m-1}$, thus 
we can find an $(m-1)$-partial strong $\ora K_r$ minor $K$ contained in $L'\cap L$ with branch sets $V_1,\ldots, V_r$.
If all sets $V_i$ induce a strongly connected digraph, we have that $K$ is a strong $\ora K_r$-minor and $\strm(D)\geq r$, so we are done.
Otherwise assume without loss of generality that  $D[V_1]$ is not strongly-connected. Let $V_1 = \{x_1,\ldots, x_s\}$. 
Now for each $\ell \in [s]$ let $P_\ell^+$ be the directed $v-x_\ell$ path in $T^+$ and $P_\ell^-$ the directed $x_\ell-v$ path in $T^-$.
Note that these paths ``leave" the respective layers in $T^+$ and $T^-$ immediately.  Then for $\ell \in [s]$, $\{x_\ell\} \cup (P_\ell^-) \cup (P_{\ell+1}^+) \cup \{x_{\ell+1}\}$ (with addition modulo $s$) induces an $x_\ell-x_{\ell+1}$ walk, which contains an $x_\ell - x_{\ell+1}$ path. This path intersects $K$ exactly in the two points $ \{x_\ell, x_{\ell+1}\}$, since $V(K)$ is contained in a layer $L$ of $T^-$ and in a subset $L'$ of a layer of $T^+$. 
Thus, letting $V_1' = V_1 \cup \bigcup_{\ell=1}^s P_\ell^+ \cup P_\ell^-$, we see that $D[V_k']$ is strongly-connected and intersects $K$ only in $V_1$. Hence
we obtain an $m$-partial strong $\ora K_r$ minor $K'$ with branch sets $V'_1, V_2, \ldots,  V_r$. 
This proves the claim and the theorem.
\end{proof}


\section{Concluding remarks and open problems}\label{sec:final}

We have investigated several  relationships  between large dichromatic number and the presence of strong complete minors in digraphs. Many problems remain. Regarding \Cref{thm:main-tourn-1}, the most obvious question is whether or not the bound on dichromatic number can be decreased.

\begin{question}
Is it true that any tournament $T$ with $\chi(T) \ge (1 + o(1)) r$ satisfies $\strm(T) \ge r$?
\end{question}

For general digraphs, our bounds are far apart. To take a more general view, we may address the following question: Which digraph parameters force the existence of large strong complete minors? To formalize this, for a digraph $H$ and a digraph parameter $\phi$, we let $\strm_\phi(H)$ denote the smallest integer such that any digraph $D$ with $\phi(D) \ge \strm_\phi(H)$ contains a strong $H$ minor; if such an integer does not exist, then we define $\strm_\phi(H) = \infty$. This is analogous to the work of Aboulker et al.~\cite{ACH}, where they introduced the parameter $\text{mader}_\phi(H)$ as the smallest integer such that any digraph $D$ with $\phi(D) \ge \text{mader}_\phi(H)$ contains a subdivision of $H$. \Cref{thm:negative-digraph} can be expressed as saying that for any $r$, $\strm_{\delta^+}(\ora{K}_r) = \infty$, and  our \Cref{thm:main-digraph} can be expressed succintly as
\[
    r + 1 \le \strm_{\chi}(\ora{K}_r) \le r4^r.
\]
It would be very interesting to obtain better bounds on this function. 

In \Cref{lem:tournaments-connected}, we showed that strong-connectivity is sufficient to guarantee large complete minors in tournaments. Is the same true in general digraphs? Letting $\kappa(D)$ denote the strong-connectivity of a digraph $D$, we pose the following problem:

\begin{problem}
Determine whether or not $\strm_{\kappa}(\ora{K}_r) < \infty$.
\end{problem}

For a given natural digraph parameter $\phi$, it would be interesting to determine  $\strm_\phi(H)$, for some other digraphs $H$,  for example oriented trees, cycles, or transitive tournaments.

\begin{bibdiv} 
	\begin{biblist} 
		
\bib{ACH}{article}{
		title={Subdivisions in digraphs of large out-degree or large dichromatic number},
		author={Aboulker, Pierre},
		author={Cohen, Nathann},
		author={Havet, Fr{\'e}d{\'e}ric}, 
		author={Lochet, William},
		author={Moura, Phablo FS},
		author={Thomass{\'e}, St{\'e}phan},
		journal={Electronic Journal of Combinatorics},
		volume={26},
	        number={3},
		year={2019}
}

\bib{TandC}{article}{
	title={Tournaments and colouring},
	author={Berger, Eli},
	author={Choromanski, Krzysztof},
	author={Chudnovsky, Maria},
	author={Fox, Jacob},
	author={Loebl, Martin},
	author={Scott, Alex},
	author={Seymour, Paul},
	author={Thomass{\'e}, St{\'e}phan},
	journal={Journal of Combinatorial Theory, Series B},
	volume={103},
	number={1},
	pages={1--20},
	year={2013}
}

\bib{BCH}{article}{
	title={Hadwiger's conjecture is true for almost every graph.},
	author={Bollob{\'a}s, B{\'e}la},
	author={Catlin, Paul A},
	author= {Erd\H{o}s, Paul},
	journal={Eur. J. Comb.},
	volume={1},
	number={3},
	pages={195--199},
	year={1980}
}

\bib{D}{article}{
	title={A property of 4-chromatic graphs and some remarks on critical graphs},
	author={Dirac, Gabriel A},
	journal={Journal of the London Mathematical Society},
	volume={1},
	number={1},
	pages={85--92},
	year={1952},
	publisher={Wiley Online Library}
}

\bib{GPS}{article}{
	title={Subdivisions of digraphs in tournaments},
	author={Gir{\~a}o, Ant{\'o}nio},
	author={Popielarz, Kamil},
	author={Snyder, Richard},
	journal={Journal of Combinatorial Theory, Series B},
	volume={146},
	pages={266--285},
	year={2021}
}

\bib{GSS}{article}{
	title={Dichromatic number and forced subdivisions},
	author={Gishboliner, Lior},
	author={Steiner, Raphael},
	author={Szab{\'o}, Tibor},
	journal={arXiv preprint arXiv:2008.09888},
	year={2020}
}
\bib{GSS2}{article}{
	title={Oriented cycles in digraphs of large outdegree},
	author={Gishboliner, Lior},
	author={Steiner, Raphael},
	author={Szab{\'o}, Tibor},
	journal={arXiv preprint arXiv:2008.13224},
	year={2020}
}

\bib{H}{article}{
	title={{\"U}ber eine Klassifikation der Streckenkomplexe},
	author={Hadwiger, Hugo},
	journal={Vierteljschr. Naturforsch. Ges. Z{\"u}rich},
	volume={88},
	number={2},
	pages={133--142},
	year={1943}
}

\bib{HLNT}{article}{
	title={Coloring Dense Digraphs},
	author={Harutyunyan, Ararat},
	author={Le, Tien-Nam},
	author={Newman, Alantha},
	author={Thomass{\'e}, St{\'e}phan},
	journal={Combinatorica},
	volume={39},
	pages={1021--1053},
	year={2019}
}

\bib{HLTW}{article}{
	title={Coloring tournaments: From local to global},
	author={Harutyunyan, Ararat},
	author={Le, Tien-Nam},
	author={Thomass{\'e}, St{\'e}phan},
	author={Wu, Hehui},
	journal={Journal of Combinatorial Theory, Series B},
	volume={138},
	pages={166--171},
	year={2019},
}
		
\bib{J}{article}{
	title={An extremal function for digraph subcontraction},
	author={Jagger, Chris},
	journal={Journal of Graph Theory},
	volume={21},
	number={3},
	pages={343--350},
	year={1996},
	publisher={Wiley Online Library}
}	

\bib{J2}{article}{
	title={Extremal digraph results for topological complete subgraphs},
	author={Jagger, Chris},
	journal={European Journal of Combinatorics},
	volume={19},
	number={6},
	pages={687--694},
	year={1998},
	publisher={Elsevier}
}

\bib{JRST}{article}{
		title={Directed Tree-Width},
		author={Johnson, Thor},
		author={Robertson, Neil},
		author={Seymour, P.D.},
		author={Thomas, Robin},
		journal={Journal of Combinatorial Theory, Series B},
		volume={82},
		number={1},
		pages={138--154},
		year={2001},
		publisher={Elsevier}
}

\bib{KS}{article}{
			title={Tournament minors},
			author={Kim, Ilhee}, 
			author={Seymour, Paul},
			journal={Journal of Combinatorial Theory, Series B},
			volume={112},
			pages={138--153},
			year={2015},
			publisher={Elsevier}
}

\bib{K}{article}{
	title={Lower bound of the Hadwiger number of graphs by their average degree},
	author={Kostochka, Alexandr V.},
	journal={Combinatorica},
	volume={4},
	number={4},
	pages={307--316},
	year={1984},
	publisher={Springer}
}

\bib{N}{article}{
	title={The Dichromatic Number of a Digraph},
	author={Neumann-Lara, V.},
	journal={Journal of Combinatorial Theory, Series B},
	volume={33},
	pages={265--270},
	year={1982}
}

\bib{NS}{article}{
	title={Breaking the degeneracy barrier for coloring graphs with no $ K_t $ minor},
	author={Norin, Sergey},
	author={Song, Zi-Xia},
	journal={arXiv preprint arXiv:1910.09378},
	year={2019}
}

\bib{P}{article}{
	title={Highly linked tournaments},
	author={Pokrovskiy, Alexey},
	journal={Journal of Combinatorial Theory, Series B},
	volume={115},
	pages={339--347},
	year={2015},
	publisher={Elsevier}	
}

\bib{Postle1}{article}{
	title={Halfway to Hadwiger's Conjecture},
	author={Postle, Luke},
	journal={arXiv preprint arXiv:1911.01491},
	year={2019}
}

\bib{Postle2}{article}{
	title={Further progress towards Hadwiger's conjecture},
	author={Postle, Luke},
	journal={arXiv preprint arXiv:2006.11798},
	year={2020}
}

\bib{RST}{article}{
	title={Hadwiger's conjecture forK 6-free graphs},
	author={Robertson, Neil},
	author={Seymour, Paul},
	author={Thomas, Robin},
	journal={Combinatorica},
	volume={13},
	number={3},
	pages={279--361},
	year={1993},
	publisher={Springer}
}

\bib{Thomason}{article}{
	title={An extremal function for contractions of graphs},
	author={Thomason, Andrew},
	booktitle={Mathematical Proceedings of the Cambridge Philosophical Society},
	volume={95},
	number={2},
	pages={261--265},
	year={1984},
	organization={Cambridge University Press}
}

\bib{Thomassen}{article}{
			title={Even cycles in directed graphs},
			author={Thomassen, Carsten},
			journal={European Journal of Combinatorics},
			volume={6},
			number={1},
			pages={85--89},
			year={1985},
			publisher={Elsevier}
}

\bib{W}{article}{
		title={\"Uber eine Eigenschaft der ebenen Komplexe},
		author={Wagner, Klaus},
		journal={Mathematische Annalen},
		volume={114},
		number={1},
		pages={570--590},
		year={1937},
		publisher={Springer}
}

	\end{biblist}
\end{bibdiv}


\end{document}